\documentclass[a4paper,11pt]{amsart}
\usepackage{amsmath,amssymb,amsthm,mathtools,amscd,mathrsfs}
\usepackage{indentfirst}
\usepackage{stmaryrd}
\usepackage{graphicx}
\usepackage{extarrows}
\usepackage{romanbar}
\usepackage{multirow}
\usepackage{amsfonts}
\usepackage{color}
\usepackage{pictexwd,dcpic}
\usepackage{psfrag}
\usepackage{hyperref}
\usepackage{comment}
\usepackage{enumerate}

\usepackage{tabularx,array,float,microtype}
\allowdisplaybreaks[4]

\definecolor{DarkBlue}{rgb}{0.1,0.1,0.55}
\definecolor{DarkRed}{rgb}{0.55,0.1,0.1}

\hypersetup{
  colorlinks=true,
  linkcolor=DarkBlue,
  citecolor=DarkRed,
  urlcolor=DarkBlue,
  pdftitle={On Chern's Conjecture for Minimal Submanifolds with Flat Normal Bundle in Spheres},
  pdfauthor={Jianquan Ge, Fagui Li and Yunheng Zhang}
}

\numberwithin{equation}{section}
\theoremstyle{plain}
\newtheorem{thm}{Theorem}[section]

\newtheorem{prop}[thm]{Proposition}

\newtheorem{lem}[thm]{Lemma}
\newtheorem{cor}[thm]{Corollary}

\newtheorem{rem}[thm]{Remark}

\newtheorem*{simonstheorem}{Simons' theorem}
\newtheorem*{Chernconjecture}{Chern's conjecture}
\newtheorem*{PengTerngtheorem}{Peng--Terng's theorem}
\theoremstyle{definition}

\makeatletter
\@namedef{subjclassname@2020}{\textup{2020} Mathematics Subject Classification}
\renewcommand{\@defaultbiblabelstyle}[1]{[#1]}
\renewcommand\subsection{\@startsection{subsection}{2}{\z@}%
  {2.0ex plus 0.8ex minus 0.2ex}% beforeskip
  {1.0ex plus 0.2ex}% afterskip (positive => text starts on next line)
  {\normalfont\bfseries}}
\makeatother

\title[Chern's conjecture for submanifolds with flat normal bundle
]{On Chern's Conjecture for Minimal Submanifolds with Flat Normal Bundle in Spheres}

\author[J. Q. Ge]{Jianquan Ge}
\address{School of Mathematical Sciences, Laboratory of Mathematics and Complex Systems, Beijing Normal University, Beijing 100875, P. R. China.}
\email{jqge@bnu.edu.cn}

\author[F. G. Li]{Fagui Li$^{*}$}
\address{$^{*}$Frontier Interdisciplinary Domain, Beijing Institute of Technology, Zhuhai, Guangdong 519088, P. R. China.}
\email{lifagui@bitzh.edu.cn}

\author[Y. H. Zhang]{Yunheng Zhang}
\address{School of Mathematical Sciences, Laboratory of Mathematics and Complex Systems, Beijing Normal University, Beijing 100875, P. R. China.}
\email{yunheng@mail.bnu.edu.cn}
\date{}
\subjclass[2020]{53C20, 53C24, 53C42}
\keywords{Chern's conjecture, minimal submanifolds, rigidity theorem, flat normal bundle.}
\thanks{$^{*}$ Corresponding author.}
\thanks{J. Q. Ge is partially supported by NSFC (No. 12571049) and the Fundamental Research Funds for the Central Universities.}
\thanks{F. G. Li is partially supported by NSFC (No. 12271040 and 12501061), the Guangdong Provincial Association for Science and Technology Youth Talent Support Program (No. SKXRC2026413) and the Research Start-up Funding of Beijing Institute of Technology (No. 5640011253301).}
\begin{document}
	\begin{abstract}
Let $M^n$ $(n\geqslant3)$ be a closed minimal submanifold in the unit sphere $\mathbb S^{n+m}$ $(m\geqslant2)$ with flat normal bundle, and let $S$ denote the squared norm of its second fundamental form. We prove an explicit second-gap rigidity theorem for   $S$. More precisely, if $S$ is constant and
\[
0\leqslant S\leqslant n+\delta,
\]
where $\delta$ is an explicit constant satisfying $\delta\geqslant \frac{n}{87}$, then either $S\equiv0$ and $M$ is a totally geodesic sphere, or $S\equiv n$ and $M$ is a Clifford torus contained in a totally geodesic $\mathbb S^{n+1}\subset\mathbb S^{n+m}$. 
%We observe that the flat-normal-bundle assumption is necessary here. 
The flat-normal-bundle condition is essential in the general higher-codimensional setting: without it, the corresponding rigidity statement already fails in dimension two.
This theorem provides positive evidence for Chern's conjecture in higher codimension.
\end{abstract}
	\maketitle
	
	\section{Introduction}
	In 1968, Simons \cite{Simons} established the celebrated rigidity theorem stated below:
	
%	\vspace{2mm}
%	\noindent
%	\textbf{Simons theorem.}
%	\textit{
\begin{simonstheorem}
		Let $M^n$ be a closed, minimally immersed submanifold in the unit
		sphere $\mathbb{S}^{n+m}$, and let $S$ denote the squared norm of its second
		fundamental form. Then
		\[\int_M S\left(S-\frac{n}{2-\frac{1}{m}}\right) dM\geqslant0.\]
		In particular, for $S\leqslant\frac{n}{2-\frac{1}{m}}$
		one has either $S\equiv0$ or  $S\equiv\frac{n}{2-\frac{1}{m}}$
		on $M^n$.
\end{simonstheorem}
%	\vspace{2mm}
	
	This theorem indicates that if $S$ is constant, it cannot take any
	value in the open interval $(0, \frac{n}{2-\frac{1}{m}})$. By systematically employing the moving frame method, Chern \cite{chern} recovered Simons' inequality above. Furthermore, in collaboration with do Carmo and Kobayashi (cf.\ \cite{CCK}), he showed that the only submanifolds on which
	$S\equiv\frac{n}{2-\frac{1}{m}}$ are the Clifford
	minimal hypersurfaces and the Veronese surface (in $\mathbb{S}^4$). Afterwards, Li--Li \cite{1992} and Chen--Xu \cite{CX93} improved Simons' pinching constant to $\frac{2n}{3}$ for codimension $m\geqslant2$.
	
	Inspired by this discovery, the renowned Chern conjecture was proposed by Chern in \cite{chern} and by Chern, do Carmo and Kobayashi in \cite{CCK}, in 1968 and 1970, respectively.
	
	%\vspace{2mm}
%	\noindent
%	\textbf{Chern conjecture.}
	%\textit{
	\begin{Chernconjecture}
	Let $M^n$ be a closed, minimally immersed submanifold in the unit
		sphere $\mathbb{S}^{n+m}$ with constant scalar curvature, equivalently with constant length of the second fundamental form, whose squared norm is denoted by $S$. Then for each $n$, the set of all possible values for $S$ is discrete.
		\end{Chernconjecture}%}
%	\vspace{2mm}
	
	Motivated by isoparametric theory and by the fact that
	isoparametric hypersurfaces are the only known examples of minimal hypersurfaces with
	constant scalar curvature in $\mathbb{S}^{n+1}$ (see Chi \cite{Chi21} for a historical survey),
	the Chern conjecture in the hypersurface case has been reformulated as the following stronger version (cf.\ \cite{Verstraelen}); see also \cite{ScherfnerWeissYau12,XuXuSurvey24} for surveys of this problem and its isoparametric formulation.
	
%	\vspace{2mm}
%	\noindent
%	\textbf{Chern conjecture (Stronger version).}
%	\textit{
	\begin{Chernconjecture}[Stronger version]
		Let $M^n$ be a closed, minimally immersed hypersurface of the unit
		sphere $\mathbb{S}^{n+1}$ with constant scalar curvature. Then $M^n$ is isoparametric.
			\end{Chernconjecture}
%	}
%	\vspace{2mm}
	
	Since then, a long sequence of improvements has been obtained in the hypersurface case. In 1983, Peng--Terng \cite{Peng-T1, Peng-T2} pioneered the study of the second pinching problem for minimal hypersurfaces in the unit sphere and made the following breakthrough regarding the Chern conjecture. 
	
%	\vspace{2mm}
%	\noindent
%	\textbf{Peng--Terng theorem.}
		\begin{PengTerngtheorem}
%	\textit
	Let $M^n$ be a closed minimal hypersurface in $\mathbb{S}^{n+1}$.
	\begin{enumerate}
		\item If $S$ is constant and $n\leqslant S\leqslant n+\frac{1}{12n}$, then $S=n$.
		\item If $n\leqslant5$ and $n\leqslant S\leqslant n+\frac{6-1.13n}{5+\sqrt{17}}$, then $S\equiv n$.
\end{enumerate}
	\end{PengTerngtheorem}
	%}
%\vspace{2mm}

Thereafter, numerous results concerning the second gap of $S$ have emerged. Chang \cite{C2} proved that \emph{a closed minimally immersed hypersurface with constant scalar curvature in $\mathbb{S}^{4}$ is isoparametric}, which solved the stronger version of the Chern conjecture in the case $n=3$. When $S$ is constant, Yang--Cheng \cite{YC} improved the pinching constant from $\frac{1}{12n}$ to $\frac{n}{3}$ in 1998. Later, Suh--Yang \cite{2007} further improved it to $\frac{3n}{7}$.
 Without assuming the constancy of $S$, Cheng--Ishikawa \cite{1999}, Wei--Xu \cite{WX}, and Zhang \cite{Zhang} partially improved the second gap in low dimensions. Ding--Xin \cite{DX} first extended it to arbitrary dimensions. Building on further refinements by Xu--Xu \cite{2016}, Lei--Xu--Xu \cite{2017} improved the second gap to $\frac{n}{18}$. 
 
 Another important direction assumes additional trace information, such as the constancy of $f_3=\sum\limits_i\lambda_i^3$, under which stronger conclusions are available. Tang--Wei--Yan \cite{TangWeiYan20} and Tang--Yan \cite{TangYan23} obtained strong sufficient conditions ensuring that hypersurfaces with prescribed higher mean curvatures or trace invariants are isoparametric. These results extend \cite{A-B} to arbitrary dimensions. Cheng--Wei--Yamashiro \cite{ChengWeiYamashiro25} showed that \emph{for a complete minimal hypersurface with constant scalar curvature, if $f_{3}$ is constant and $S>n$, then $S>1.8252n-0.712898$}. Related rigidity and second-gap results in dimension four and in Willmore-type settings were obtained by Deng--Gu--Wei \cite{DengGuWei17}, Li \cite{Li22}, Ge--Tan--Yan--Zhang \cite{GeTanYanZhang25}, He--Xu--Zhao \cite{HeXuZhao26}, Tao \cite{Tao26}, Ge--Liu--Luo--Yan \cite{GLLY} and Deng-Kou \cite{DengKou2026}, etc. These results provide strong evidence for the expected discreteness and rigidity characterization in codimension one.
 
 However, very little is known about the higher-codimensional situation. In 2011, Lu \cite{Lu11} proposed a natural higher-codimensional analogue involving the second largest eigenvalue of the fundamental matrix
 \[
 \mathcal A=(\langle A^\alpha,A^\beta\rangle)_{m\times m}.
 \]
 Let $a_{1}\geqslant a_{2}\geqslant\cdots\geqslant a_{m}$ denote the eigenvalues of the positive semidefinite symmetric matrix $\mathcal{A}$.
 For related pinching-rigidity results for minimal surfaces in spheres, see Ding--Ge--Li \cite{DingGeLi25}. For surfaces, Ding--Ge--Li--Yang \cite{DingGeLiYang26} recently proved Lu's conjecture completely for 2-spheres and for general surfaces under mild assumptions on the normal scalar curvature. In particular, they proved the following consequence:
\begin{quote}
\itshape\noindent
For a closed minimal surface $M^{2}$ immersed in the unit sphere $\mathbb{S}^{2+m}$, if the normal bundle of $M$ is flat and $S>2$, then $\max\limits_{p\in M}S(p)>\frac{20}{9}$.
\end{quote}
A different but closely related development is due to Li--Zhao \cite{LiZhao26}, who constructed closed embedded flat minimal tori showing that Lu's refined quantity $S+a_2$ itself does not satisfy a Chern-type discreteness or second-gap statement in codimension at least three. Here ``flat'' refers to the induced surface metric in that construction, and their result concerns Lu's quantity rather than the constant-$S$ problem studied here. This construction explains why we focus on $S$ under the flat-normal-bundle assumption rather than on the refined quantity $S+a_2$. Very recently, as a byproduct, we \cite{GTZ} established the following first-gap theorem for $S$ under the flat-normal-bundle condition.
 
% \vspace{2mm}
  \begin{thm}[\cite{GTZ}]
 	Let $M^n$ be a closed minimal submanifold immersed in the unit sphere $\mathbb{S}^{n+m}$ with flat normal bundle. If $0\leqslant S\leqslant n$, then $M^n$ must be one of the following:
 	\begin{enumerate}
 		\item[\rm (1)]   the totally geodesic sphere, with $S\equiv 0$; 
 		\item[\rm (2)]   the Clifford torus $\mathbb{S}^{k}\left(\sqrt{\frac{k}{n}}\right)\times \mathbb{S}^{n-k}\left(\sqrt{\frac{n-k}{n}}\right)$, with $S\equiv n$ and $1\leqslant k\leqslant n-1$, lying, up to an ambient isometry, in a totally geodesic $\mathbb{S}^{n+1}\subset\mathbb{S}^{n+m}$.
 	\end{enumerate}
 	  \end{thm}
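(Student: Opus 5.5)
Since the normal bundle is flat, the Ricci equation gives $[A^\alpha,A^\beta]=0$ for all normal directions, so at each point the shape operators $A^{n+1},\dots,A^{n+m}$ can be diagonalized simultaneously. In an orthonormal frame $\{e_i\}$ with $h^\alpha_{ij}=\lambda^\alpha_i\delta_{ij}$, I introduce the vectors $\lambda_i=(\lambda^{n+1}_i,\dots,\lambda^{n+m}_i)\in\mathbb R^m$. Minimality gives $\sum_i\lambda_i=0$, and $S=\sum_i|\lambda_i|^2$. I will start from Simons' identity
\[
\tfrac12\Delta S=|\nabla h|^2+nS-\sum_{\alpha,\beta}\bigl(\operatorname{tr}(A^\alpha A^\beta)\bigr)^2-\sum_{\alpha,\beta}|[A^\alpha,A^\beta]|^2 .
\]
The commutator term vanishes. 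In the diagonal frame,
\[
\sum_{\alpha,\beta}(\operatorname{tr}A^\alpha A^\beta)^2=\sum_{i,j}\langle\lambda_i,\lambda_j\rangle^2 .
\]
I will then bound this Gram-type sum by $S^2$, using Cauchy--Schwarz $\langle\lambda_i,\lambda_j\rangle^2\leqslant|\lambda_i|^2|\lambda_j|^2$. This gives $\tfrac12\Delta S\geqslant|\nabla h|^2+S(n-S)$.

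Integrating over the closed manifold $M$ gives
\[
0\geqslant\int_M|\nabla h|^2+\int_M S(n-S).
\]
When $0\leqslant S\leqslant n$ both integrands are nonnegative, so they vanish identically. Hence $\nabla h\equiv0$, and at every point either $S=0$ or $S=n$. Moreover, at points with $S=n\neq 0$, equality must hold in Cauchy--Schwarz for all pairs $i,j$. So all nonzero $\lambda_i$ are parallel to a single unit normal direction $\xi$, i.e. $\lambda_i=\mu_i\xi$. Since $S$ is continuous and $M$ is connected, $S\equiv0$ (totally geodesic case, conclusion (1)) or $S\equiv n$.

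In the case $S\equiv n$, the second fundamental form is parallel and $h(X,Y)=\langle AX,Y\rangle\xi$ for a unit normal field $\xi$ defined on all of $M$ (where $A\neq0$, which is everywhere). Parallelism of $h$ forces $\nabla^\perp\xi=0$. Indeed, differentiating $h=A\otimes\xi$ gives $(\nabla A)\otimes\xi+A\otimes\nabla^\perp\xi=0$, and since $\nabla^\perp\xi\perp\xi$ and $A\neq0$, we get $\nabla^\perp\xi=0$ and $\nabla A=0$. So the first normal space is a parallel line subbundle. The standard reduction of codimension (Erbacher's theorem) then places $M$ in a totally geodesic $\mathbb S^{n+1}\subset\mathbb S^{n+m}$, where it is a minimal hypersurface with $S\equiv n$. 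By Chern--do Carmo--Kobayashi \cite{CCK} (equivalently, Lawson's classification of minimal hypersurfaces with $S=n$, or directly the fact that parallel $A$ with $S=n$ has exactly two constant principal curvatures), $M$ is a Clifford torus $\mathbb S^{k}(\sqrt{k/n})\times\mathbb S^{n-k}(\sqrt{(n-k)/n})$ with $1\leqslant k\leqslant n-1$.

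The main obstacle is the equality analysis rather than the inequality itself. One has to make sure that the equality case of $\sum_{i,j}\langle\lambda_i,\lambda_j\rangle^2\leqslant S^2$ yields a globally defined smooth unit normal $\xi$, not just pointwise collinearity, so that Erbacher's reduction applies. This is where $S\equiv n>0$ (hence $A\neq0$ everywhere) is used: it lets me define $\xi$ smoothly as the unit vector spanning the image of $h$, and the parallelism then follows as above.
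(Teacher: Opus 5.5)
Your proof is correct. The paper itself gives no proof of this statement: it quotes the result from \cite{GTZ} and uses it as a black box, in particular at the end of the proof of Theorem~\ref{main} to identify the Clifford torus once $S\equiv n$ is known.

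Your argument is the Chern--do Carmo--Kobayashi argument specialized to a flat normal bundle, and it fits the paper's own tools. Your Cauchy--Schwarz bound $\sum_{i,j}\langle\lambda_i,\lambda_j\rangle^2\leqslant S^2$ is exactly $|\mathcal A|^2=S^2-2\sigma_2\leqslant S^2$, which the paper uses in proving \eqref{Sconstant-general}. Your equality case, all $\lambda_i$ collinear, is the same as $\sigma_2=0$, i.e.\ $\operatorname{rank}\mathcal A\leqslant 1$. That is precisely the information the paper extracts at the end of its main proof before deferring to \cite{GTZ}. What your route adds is the identification step. Parallel $h$ together with a one-dimensional first normal space gives a parallel line subbundle. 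Erbacher's reduction then places $M$ in a totally geodesic $\mathbb S^{n+1}$, and CCK/Lawson (or the two-constant-principal-curvature argument from $\nabla h=0$) finishes.

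Two small points should be tightened.

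First, state explicitly that $\tfrac12\Delta S=|\nabla h|^2+S(n-S)+(S^2-|\mathcal A|^2)$ is a sum of three nonnegative terms integrating to zero. This is what makes the Cauchy--Schwarz defect vanish; your displayed inequality shows only the first two terms.

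Second, you have not shown that $\xi$ exists globally as a unit normal field, and you do not need it. A priori $\xi$ is determined only up to sign, and you have no orientability of $\operatorname{Im}h$ for an immersion. Work instead with the line bundle $N_1=\operatorname{Im}h$. It is smooth because $h\neq0$ everywhere. Your computation $\nabla^\perp\xi=0$ is local, so it shows $N_1$ is parallel, which is all Erbacher's theorem requires.

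As usual, ``is the Clifford torus'' for a closed immersed $M$ should be read up to a covering, and connectedness of $M$ is assumed.
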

% \vspace{2mm}
 
As mentioned above, Ding--Ge--Li--Yang \cite{DingGeLiYang26} have already proved the second-gap theorem for $n=2$ under the flat-normal-bundle and constant-$S$ assumptions, and they further obtained the explicit second-gap constant $\frac{n}{9}$. However, in dimensions $n\geqslant3$ and codimensions $m\geqslant2$, no comparable explicit second-gap theorem for the constant-$S$ Chern-type problem was known, even under structural assumptions such as flatness of the normal bundle.
 In this paper, we establish an explicit second-gap theorem for minimal submanifolds with flat normal bundle in arbitrary codimension. To the best of our knowledge, this is the first explicit second-gap result for the constant-$S$ Chern-type problem in a genuinely higher-codimensional setting with $n\geqslant3$. 
 Set
 	\[
 	\delta(n,m)=\begin{cases}
 		\dfrac{n}{81},&m=2,\ 3\leqslant n\leqslant5,\\[2mm]
 		\dfrac{n}{62},&m=2,\ n\geqslant6,\\[2mm]
 		\dfrac{n}{87},&m\geqslant3,\ 3\leqslant n\leqslant5,\\[2mm]
 		\dfrac{n}{67},&m\geqslant3,\ n\geqslant6.
 	\end{cases}
 	\]
\begin{thm}\label{main}
	Let $M^n$ $(n\geqslant3)$ be a closed minimal submanifold in the unit sphere $\mathbb{S}^{n+m}$ $(m\geqslant2)$ with flat normal bundle. 
	If $S$ is constant and $0\leqslant S\leqslant n+\delta(n,m)$, 
	then $M^n$ must be one of the following:
	 	\begin{enumerate}
	 		\item[\rm (1)]   the totally geodesic sphere, with $S\equiv 0$; 
	 		\item[\rm (2)]   the Clifford torus $\mathbb{S}^{k}\left(\sqrt{\frac{k}{n}}\right)\times \mathbb{S}^{n-k}\left(\sqrt{\frac{n-k}{n}}\right)$, with $S\equiv n$ and $1\leqslant k\leqslant n-1$, lying, up to an ambient isometry, in a totally geodesic $\mathbb{S}^{n+1}\subset\mathbb{S}^{n+m}$.
	 	\end{enumerate}
%	then $S\equiv n$ and $M^{n}$ is a Clifford torus. %In particular, the constants in the cases $m\geqslant3$ are independent of the codimension, while the codimension-two case admits the sharper constants displayed above.
\end{thm}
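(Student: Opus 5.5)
By the first-gap theorem of \cite{GTZ}, the case $0\leqslant S\leqslant n$ is already settled. It therefore suffices to show that the range $n<S\leqslant n+\delta(n,m)$ cannot occur when $S$ is constant.

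\textbf{Step 1: diagonalise all shape operators at once.} Because the normal bundle is flat, the Ricci equation lets us diagonalise every $A^\alpha$ simultaneously in a local orthonormal frame $\{e_i\}$, with $h^\alpha_{ij}=\lambda^\alpha_i\delta_{ij}$. Write $\lambda_i=(\lambda^\alpha_i)_\alpha\in\mathbb R^m$. Then minimality reads $\sum_i\lambda_i=0$, and the Gauss equation gives sectional curvatures $R_{ijij}=1+\langle\lambda_i,\lambda_j\rangle$.

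\textbf{Step 2: first integral identity.} Simons' formula in this frame becomes
\[
\tfrac12\Delta S=|\nabla h|^2+\tfrac12\sum_{i,j}R_{ijij}|\lambda_i-\lambda_j|^2 .
\]
Since $S$ is constant, $\Delta S=0$. Moreover $\sum_{i,j}\langle\lambda_i,\lambda_j\rangle^2\geqslant S^2/m'$, with equality exactly in the "rank one" case. Integrating yields
\[
\int|\nabla h|^2=\int\Big(\sum_{\alpha,\beta}\langle A^\alpha,A^\beta\rangle^2+\dots-nS\Big)\geqslant\int S(S-n),
\]
and this should be refined using $\sum_{i,j}\langle\lambda_i,\lambda_j\rangle^2$. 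The aim is an upper bound of the form $\int|\nabla h|^2\leqslant C\int S(S-n)+(\text{a defect term})$, where the defect term measures how far the vectors $\lambda_i$ are from spanning a single line.

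\textbf{Step 3: second integral identity (Peng--Terng style).} Compute $\frac12\Delta|\nabla h|^2$, which involves $|\nabla^2h|^2$ and cubic terms of the form $\sum(\lambda_i-\lambda_j)^2 h_{ijk}^2$-type expressions. Integrate this, and use commutation of third derivatives, to obtain
\[
\int|\nabla^2h|^2=\int\Big[(2n+3-S)|\nabla h|^2+\tfrac32 A-3B\Big],
\]
where $A,B$ are the vector analogues of the Peng--Terng quantities built from $h^\alpha_{ijk}$ and $\lambda_i$. We then need a lower bound on $|\nabla^2h|^2$. For this, differentiate $\sum_{i,\alpha}(\lambda^\alpha_i)^2=S$ twice and use the Codazzi symmetry, giving an estimate like $|\nabla^2h|^2\geqslant \frac{3}{2}\frac{(\text{stuff})^2}{S}$. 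Separately, the algebraic estimate $3B-\frac32A\leqslant c\,S\,|\nabla h|^2$ should be proved pointwise, via Lagrange-multiplier or Okumura-type inequalities applied to the vectors $\lambda_i$.

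\textbf{Step 4: combine and conclude.} Putting these together gives an inequality $\int|\nabla h|^2\,(\kappa n-(S-n))\leqslant0$, possibly together with a term controlling the rank defect. When $S-n<\delta$ this forces $\nabla h\equiv0$. The second fundamental form is then parallel, and by the classification of parallel submanifolds with flat normal bundle, the only minimal examples are products of spheres. Among these, the only value in $(n,n+\delta]$ would come from products with three or more factors, which have $S\geqslant 2n$ or otherwise leave the interval. This contradicts $n<S\leqslant n+\delta$.

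\textbf{Main obstacle.} The hardest step is the pointwise algebraic estimate in Step 3. In codimension one, Peng--Terng and Yang--Cheng exploit eigenvalues being scalars. Here the $\lambda_i$ are vectors in $\mathbb R^m$, so the cubic terms mix normal directions. My first attempt would be to bound them by splitting along the direction of the dominant eigenvector of $\mathcal A$ and then using $\sum\langle\lambda_i,\lambda_j\rangle^2\leqslant S^2$ to control the orthogonal part. The loss incurred in this splitting is what should produce the weaker constants, $n/62$ through $n/87$, with $m=2$ sharper because there the orthogonal complement is one-dimensional. The case distinction $n\leqslant5$ versus $n\geqslant6$ would come from optimising numerical parameters in this step.
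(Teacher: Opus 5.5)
There is a genuine gap: the inequality $\int|\nabla h|^2(\kappa n-(S-n))\leqslant0$ in Step~4 is asserted rather than derived, and the estimates you propose in Step~3 cannot produce it.

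\textbf{The identities in Steps 2--3.} The integrated Bochner identity is misstated. For constant $S$ in codimension one it reads $\int|\nabla^2h|^2=\int[(S-2n-3)|\nabla h|^2+3(A-2B)]$. For $m\geqslant2$, by \eqref{eq:simons-gradient}, $S|\nabla h|^2$ is replaced by $\sum_r a_r|\nabla h^{n+r}|^2$. There is also an extra term $6\sum_{\alpha,\beta,k}\langle A^\alpha,\nabla_{e_k}A^\beta\rangle^2$ that does not vanish for constant $S$. The paper controls these with the flatness symmetry of Lemma~\ref{fine} (giving Lemma~\ref{lem:u-bound}) and the smallness $b=S-a_1\leqslant\epsilon$, which comes from $nS\leqslant|\mathcal A|^2\leqslant(S-b)^2+b^2$. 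So your instinct to split off the dominant eigendirection of $\mathcal A$ is right. Note, however, that Step~2 has the inequality reversed: $|\nabla h|^2=S(S-n)-2\sigma_2\leqslant S(S-n)$.

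\textbf{The wrong lower bound.} The bound you get by differentiating $S$ twice is of the form $\frac{3|\nabla h|^4}{2(n+4)S}\leqslant\frac{3\delta}{2(n+4)}|\nabla h|^2$. This is negligible, and the paper discards it. The lower bound that does the work is the curvature part. The Ricci identity $h^\alpha_{ijij}-h^\alpha_{jjii}=(\lambda^\alpha_i-\lambda^\alpha_j)K_{ij}$ gives $|\nabla^2h|^2\geqslant\frac34F$ (Lemma~\ref{xj}). The integral formula of Lemma~\ref{AAA}, generalising Peng--Terng's Theorem~4, identifies $\int\sum(A_{\alpha,\beta}-2B_{\alpha,\beta})$ with $\int(\frac12F+|\nabla h|^2)$ up to terms mixing different normal directions. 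A convex combination then yields
$\int|\nabla^2h|^2\geqslant\int[\sigma\sum(A-2B)-(\sigma+O(\epsilon))|\nabla h|^2+(\frac34-\frac\sigma2)F]$ for $0<\sigma<\frac32$.

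\textbf{The wrong pointwise estimate.} Even with this lower bound, a Peng--Terng-type estimate $3(A-2B)\leqslant cS|\nabla h|^2$ cannot close the argument. With the best constant $c=\Gamma_n$ from \eqref{key}, the coefficient of $|\nabla h|^2$ in the combined inequality is $S-2n-3+\sigma+(1-\frac\sigma3)\Gamma_nS$. For every $n\geqslant6$ and every $\sigma<\frac32$ this is positive for all $S\geqslant n$: at $S=n$ it exceeds $(\frac{\Gamma_n}{2}-1)n-\frac32$, about $0.28n$ for large $n$. So no contradiction results.

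The missing ingredient is the refined Lei--Xu--Xu-type bound of Lemma~\ref{mixA-2B}:
$3\sum(A-2B)\leqslant\bigl(S+4+(\Gamma_n-1)\epsilon+\sqrt[3]{cF_1}\bigr)|\nabla h|^2$.
Its leading coefficient is $S$ rather than $\Gamma_nS$. Its error term is tied to $F$ through $K_{ij}=1+\langle\lambda_i,\lambda_j\rangle$, namely $F_1\leqslant(1+\rho)F+(1+\rho^{-1})\frac{n-2}{n}(S-\epsilon)\epsilon^2$. Young's inequality (Lemma~\ref{Young}) then absorbs $F^{1/3}|\nabla h|^2$ into $-(\frac34-\frac\sigma2)F$, and the leading coefficient becomes about $-\frac{\sigma}{3}n<0$. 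This, not parameter optimisation, is also the source of the split $n\leqslant5$ versus $n\geqslant6$: Lemma~\ref{lem:XLX-pair} requires $s\geqslant6$.

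\textbf{The endgame.} Your endgame is sound. If $\nabla h\equiv0$, the Ricci identity forces $\langle\lambda_i,\lambda_j\rangle=-1$ for distinct principal normals, whence $S=(k-1)n$. In the paper the final inequality also kills the $\sigma_2$ term, so $S=n$ follows directly from $|\nabla h|^2=S(S-n)-2\sigma_2$.
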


\begin{rem}\label{rem:LiZhao-flat-tori}
The two-dimensional theorem quoted above has the second-gap constant $\frac n9$. Hence the conclusion of Theorem~\ref{main} remains valid in dimension two. % if $\delta$ is replaced by any of the smaller bounds $n/81$, $n/62$, $n/87$, or $n/67$. 
The flat-normal-bundle assumption is essential in dimension two. Without it, Li--Zhao~\cite{LiZhao26} constructed a sequence of closed embedded flat minimal tori in spheres, none of which is a Clifford torus, such that $S\equiv n$ and the normal scalar curvature is constant on each torus, with these constants converging to zero. This shows that small normal scalar curvature alone should not be expected to replace flatness of the normal bundle in Clifford-type rigidity results.
\end{rem}
This paper is organized as follows. In Section \ref{2}, by computing $\Delta S$ and $\Delta(|\nabla h|^2)$, we derive the Simons formula \eqref{Simons}, Simons-type formula \eqref{sum}, and introduce the Peng--Terng-type invariant $\sum\limits_{\alpha,\beta}(A_{\alpha,\beta}-2B_{\alpha,\beta})$ defined in \eqref{P-T type}. Moreover, we prove a sequence of sharp algebraic inequalities in Lemma \ref{inesharp}. In Section \ref{3}, we first establish an important property of flat normal bundle in Lemma \ref{fine}. Then combining the methods of Peng--Terng \cite{Peng-T2} and Lei--Xu--Xu \cite{2017}, Lemma \ref{mixA-2B} provides a new upper bound for the Peng--Terng-type invariant in higher codimension. Furthermore, we generalize an integral formula in Lemma \ref{AAA}. In Section \ref{4}, we mainly estimate both the upper and lower bounds for $\int_{M}|\nabla^2 h|^2dM$ in codimension two. These bounds, stated in Theorem \ref{upperco2} and Theorem \ref{thmlower1}, are sharper than the corresponding estimates in general codimension and lead to the proof of Theorem \ref{main} in codimension two. In Section \ref{5}, using a similar method, we complete the proof of Theorem \ref{main} in general codimension.

\section{Preliminaries}\label{2}
We consider an $n$-dimensional closed minimal submanifold $M^n$ immersed in the unit sphere $\mathbb{S}^{n+m}$. Let $\{e_1,\ldots,e_{n+m}\}$ be a local orthonormal frame on $T(\mathbb{S}^{n+m})$ such that, when restricted to $M^n$, the vectors $\{e_1,\ldots,e_n\}$  lie in the tangent bundle $T(M)$ and $\{e_{n+1},\ldots,e_{n+m}\}$ lie in the normal bundle $T^{\perp}(M)$, respectively. Let $h$ denote the second fundamental form of $M$. We shall make use of the following convention on the ranges of the indices:
\begin{equation*}
1\leqslant i,j,k,\ldots\leqslant n,
\qquad
1\leqslant r,s,t\leqslant m,
\qquad n+1\leqslant\alpha,\beta,\gamma\leqslant n+m.
\end{equation*}
For convenience, denote
\[
\begin{gathered}
|\nabla h^{\alpha}|^2=\sum_{i,j,k}(h_{ijk}^{\alpha})^2,
\qquad |\nabla h|^2=\sum\limits_{\alpha,i,j,k}(h_{ijk}^{\alpha})^2,\qquad
|\nabla^2 h|^2=\sum\limits_{\alpha,i,j,k,l}(h_{ijkl}^{\alpha})^2.
\end{gathered}
\]
Define
\[S:=|h|^2=\sum_{i,j,\alpha}(h_{ij}^{\alpha})^2,\quad \Delta h_{ij}^{\alpha}:=\sum_{k}h_{ijkk}^{\alpha}.\]
Denote by $A^{\alpha}$ the shape operator of $M^n$ with respect to a given normal orthonormal frame. For later use, we denote the fundamental matrix $\mathcal{A}=(\langle A^{\alpha},A^{\beta}\rangle)_{m\times m}$ and assume that $a_{1}\geqslant a_{2}\geqslant\cdots\geqslant a_m\geqslant0$ are eigenvalues of $\mathcal{A}$.
We further set
\begin{equation}\label{eq:bD-def}
	b:=\sum_{r=2}^m a_r=S-a_1,
	\qquad \sigma_2:=\sum_{r<s}a_r a_s,\qquad\sigma_3:=\sum_{r<s<t}a_r a_s a_t.
\end{equation}
Using the Einstein summation convention, the Ricci formulas are
\begin{equation*}
		h_{ijkl}^{\alpha}-h_{ijlk}^{\alpha}=h_{ip}^{\alpha}R_{pjkl}+h_{pj}^{\alpha}R_{pikl}-h_{ij}^{\beta}R_{\alpha\beta kl},
\end{equation*}
\begin{equation*}
		h_{ijklm}^{\alpha}-h_{ijkml}^{\alpha}=h_{pjk}^{\alpha}R_{pilm}+h_{ipk}^{\alpha}R_{pjlm}+h_{ijp}^{\alpha}R_{pklm}-h_{ijk}^{\beta}R_{\alpha\beta lm},
\end{equation*}
where
\begin{equation*}
	\begin{aligned}
		R_{ijkl}=&\delta_{ik}\delta_{jl}-\delta_{il}\delta_{jk}+h_{ik}^{\alpha}h_{jl}^{\alpha}-h_{il}^{\alpha}h_{jk}^{\alpha},\\
		R_{\alpha\beta kl}=&h_{ik}^{\alpha}h_{il}^{\beta}-h_{il}^{\alpha}h_{ik}^{\beta}.
	\end{aligned}
\end{equation*}
A direct computation gives the following formulas.
\begin{prop}
	Let $M^n$ be a closed minimal submanifold in the unit sphere $\mathbb{S}^{n+m}$. Then
\begin{equation}\label{Simons}
	\begin{aligned}
		\frac{1}{2}\Delta S=|\nabla h|^2+nS-\sum_{\alpha,\beta}|[A^{\alpha},A^{\beta}]|^2-\sum_{\alpha,\beta}|\langle A^{\alpha},A^{\beta}\rangle|^2.
	\end{aligned}
\end{equation}
\begin{equation}\label{sum}
	\begin{aligned}
		\frac{1}{2}\Delta(|\nabla h|^2)
		=&(2n+3)|\nabla h|^2+\sum_{i,j,k,p,l,\alpha,\beta}(6h_{ijk}^{\alpha}h_{lpk}^{\alpha}h_{pj}^{\beta}h_{il}^{\beta}\\
		&-3h_{ijk}^{\alpha}h_{ijp}^{\alpha}h_{pl}^{\beta}h_{lk}^{\beta}-6h_{ijk}^{\alpha}h_{pjl}^{\alpha}h_{pl}^{\beta}h_{ik}^{\beta}+6h_{ijk}^{\alpha}h_{lp}^{\alpha}h_{pjk}^{\beta}h_{il}^{\beta}\\
		&-6h_{ijk}^{\alpha}h_{pi}^{\alpha}h_{pl}^{\beta}h_{jlk}^{\beta}-h_{ijk}^{\alpha}h_{pl}^{\alpha}h_{pl}^{\beta}h_{ijk}^{\beta})+|\nabla^2 h|^2.
	\end{aligned}
\end{equation}
\end{prop}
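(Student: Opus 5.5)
The proposition consists of two Bochner-type identities, and I would prove both by direct moving-frame computation, using only the Ricci identities and the Gauss and Ricci equations displayed above, together with minimality $\sum_i h^\alpha_{ii}=0$. I would not assume flat normal bundle, since the statement is for general minimal $M^n$.

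For \eqref{Simons}, write $\frac12\Delta S=|\nabla h|^2+\sum h^\alpha_{ij}h^\alpha_{ijkk}$. Codazzi gives $h^\alpha_{ijk}$ totally symmetric in $i,j,k$. Hence
\[
h^\alpha_{ijkk}=h^\alpha_{kkij}+(\text{commutation terms}),
\]
obtained by commuting twice via the first Ricci formula. The first term vanishes by minimality. In the commutation terms I substitute the Gauss expression for $R_{ijkl}$ and the Ricci expression for $R_{\alpha\beta kl}$. The constant-curvature part of $R_{ijkl}$ produces $nS$. The quadratic part, contracted against $h^\alpha_{ij}$, splits into $-\sum(\operatorname{tr}A^\alpha A^\beta)^2$ and commutator terms. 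The normal-curvature part combines with those commutator terms to give $-\sum|[A^\alpha,A^\beta]|^2$; this is the classical Chern--do Carmo--Kobayashi computation. I would only need to track signs and factors: the two commutator contributions, one from $R_{pjkl}$ and one from $R_{\alpha\beta kl}$, combine into a single $|[A^\alpha,A^\beta]|^2$ with coefficient $-1$.

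For \eqref{sum}, write $\frac12\Delta|\nabla h|^2=|\nabla^2h|^2+\sum h^\alpha_{ijk}h^\alpha_{ijkll}$. I then compute the Laplacian of $h^\alpha_{ijk}$ in three steps.

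\emph{Step 1.} Using total symmetry, write $h^\alpha_{ijkll}=h^\alpha_{ijlkl}$, and commute $k,l$ in the fourth derivative via the third-order Ricci formula. This produces $(h^\alpha_{ijll})_k$ plus terms of type $h\cdot h\cdot \nabla h$, coming from differentiating the curvature, and terms $\nabla h\cdot R$.

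\emph{Step 2.} Here $h^\alpha_{ijll}$ is exactly the expression computed for \eqref{Simons}: it equals
\[
n h^\alpha_{ij}+\sum h^\beta_{\cdot}h^\beta_{\cdot}h^\alpha_{\cdot}
\]
with cubic terms, since $h^\alpha_{llij}=0$. Differentiating it in $k$ gives $n h^\alpha_{ijk}$ plus cubic terms with one derivative.

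\emph{Step 3.} Contract everything with $h^\alpha_{ijk}$. The constant-curvature parts give $n|\nabla h|^2$ from Step 2. The commutation in Step 1 gives another $(n+3)|\nabla h|^2$: three indices, each contributing $n$ or $1$ from the $\delta\delta$ part of $R_{pilm}$. Together these produce $(2n+3)|\nabla h|^2$.

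The remaining quartic terms, each of the form $\nabla h\,\nabla h\, h\, h$, are then grouped. Using the symmetry of $h^\alpha_{ijk}$ and relabeling of dummy indices, they collapse into the six listed monomials with coefficients $6,-3,-6,6,-6,-1$.

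The main obstacle is purely bookkeeping: correctly collecting the quartic terms and verifying the coefficients, in particular the factor $3$ produced by the three indices of $h^\alpha_{ijk}$, and the cancellations between the normal-curvature terms and the tangential Gauss terms. I would organize this by first writing each contribution with canonical index placement, so that the $\nabla h$ factors carry $\alpha$, then applying the substitutions $i\leftrightarrow j\leftrightarrow k$ permitted by symmetry, and finally comparing like monomials. As a sanity check, I would specialize to a hypersurface ($m=1$, diagonal $h_{ij}=\lambda_i\delta_{ij}$). There the formula should reduce to the known Peng--Terng identity
\[
\tfrac12\Delta|\nabla h|^2=|\nabla^2h|^2+(2n+3-S)|\nabla h|^2+3(2B-A),
\]
and this confirms the coefficients.
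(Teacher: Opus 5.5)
Your route is the paper's: derive $\Delta h^\alpha_{ij}$ from the Ricci identity, then compute $\Delta h^\alpha_{ijk}$ by commuting derivatives and differentiating \eqref{Lu}, and contract with $h^\alpha_{ijk}$. The first step gives \eqref{Lu}, equivalently \eqref{matrix}, and contracting with $A^\alpha$ gives \eqref{Simons}; that part of your plan is fine.

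\textbf{Step 1 is wrong as written.} This is exactly where the coefficients of \eqref{sum} are decided. The identity $h^\alpha_{ijkll}=h^\alpha_{ijlkl}$ does not follow from total symmetry. Codazzi makes $h^\alpha_{ijkl}$ symmetric only in its first three indices. Moving $l$ from the fourth slot to the third is a commutation of second covariant derivatives, governed by the first Ricci formula. The correct statement is
\[
\sum_l h^\alpha_{ijkll}=\sum_l h^\alpha_{ijlkl}+\sum_l\bigl(h^\alpha_{ip}R_{pjkl}+h^\alpha_{pj}R_{pikl}-h^\beta_{ij}R_{\alpha\beta kl}\bigr)_l .
\]
Only after this does the third-order Ricci formula take $h^\alpha_{ijlkl}$ to $h^\alpha_{ijllk}=(\Delta h^\alpha_{ij})_k$.

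The term you dropped matters in two ways:
\begin{enumerate}
\item It is the only source of the differentiated-curvature terms $h^\alpha_{ip}R_{pjkll}$, $h^\alpha_{pj}R_{pikll}$, $h^\beta_{ij}R_{\alpha\beta kll}$ that you invoke in your next sentence. The third-order formula produces only $\nabla h\cdot R$ terms.
\item By the product rule it also contributes a second copy of $h^\alpha_{ipl}R_{pjkl}$, $h^\alpha_{pjl}R_{pikl}$ and $h^\beta_{ijl}R_{\alpha\beta kl}$. This is why \eqref{ric} carries the coefficients $2,2,-2$.
\end{enumerate}
Consequently your count of the linear term does not follow from your own Step~1. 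The third-order formula alone gives $1+1+(n-1)=n+1$, so following Step~1 literally produces $(2n+1)|\nabla h|^2$ and wrong quartic coefficients. The correct $n+3$ is $2+2+(n-1)$. With the displayed correction, your Steps~1--3 become exactly the paper's computation \eqref{ric}.

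\textbf{Your sanity check cannot confirm all the coefficients.} For $m=1$ the monomials $h^\alpha_{ijk}h^\alpha_{lp}h^\beta_{pjk}h^\beta_{il}$ and $h^\alpha_{ijk}h^\alpha_{pi}h^\beta_{pl}h^\beta_{jlk}$ coincide; both equal $\sum h_{ijk}h_{pjk}h_{pl}h_{li}$. Their combination with coefficients $6,-6$ is the commutator pairing in \eqref{vanish}. So the hypersurface case only tests that these two coefficients are opposite, not that they are $\pm6$.

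The identity you quote is also incomplete. For a general minimal hypersurface, the Peng--Terng identity contains an extra term $-\frac32|\nabla S|^2$. This is what $-6h^\alpha_{ijk}h^\alpha_{pjl}h^\beta_{pl}h^\beta_{ik}$ reduces to when $m=1$. The version you quote holds only for constant $S$.
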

\begin{proof}
	By the Ricci formula and the minimality of $M$, we have
	\begin{equation}\label{Lu}
		\Delta h_{ij}^{\alpha}=nh_{ij}^{\alpha}+2h_{kp}^{\alpha}h_{pj}^{\beta}h_{ik}^{\beta}-h_{kp}^{\alpha}h_{pk}^{\beta}h_{ij}^{\beta}-h_{pi}^{\alpha}h_{pk}^{\beta}h_{jk}^{\beta}-h_{ki}^{\beta}h_{pj}^{\alpha}h_{pk}^{\beta}.
	\end{equation}
In matrix notation, one has
\begin{equation}\label{matrix}
	\Delta A^{\alpha}=nA^{\alpha}-\langle A^{\alpha},A^{\beta}\rangle A^{\beta}-[A^{\beta},[A^{\beta},A^{\alpha}]].
\end{equation}
Hence, the Simons formula \eqref{Simons} follows directly from \eqref{matrix}.
By \eqref{Lu}, we obtain
\begin{equation*}
	\begin{aligned}
		(\Delta h_{ij}^{\alpha})_{k}=&nh_{ijk}^{\alpha}+2h_{lpk}^{\alpha}h_{pj}^{\beta}h_{il}^{\beta}+2h_{lp}^{\alpha}h_{pjk}^{\beta}h_{il}^{\beta}+2h_{lp}^{\alpha}h_{pj}^{\beta}h_{ilk}^{\beta}\\
		&-h_{lpk}^{\alpha}h_{pl}^{\beta}h_{ij}^{\beta}-h_{lp}^{\alpha}h_{plk}^{\beta}h_{ij}^{\beta}-h_{lp}^{\alpha}h_{pl}^{\beta}h_{ijk}^{\beta}\\
		&-h_{pik}^{\alpha}h_{pl}^{\beta}h_{jl}^{\beta}-h_{pi}^{\alpha}h_{plk}^{\beta}h_{jl}^{\beta}-h_{pi}^{\alpha}h_{pl}^{\beta}h_{jlk}^{\beta}\\
		&-h_{lik}^{\beta}h_{pj}^{\alpha}h_{pl}^{\beta}-h_{li}^{\beta}h_{pjk}^{\alpha}h_{pl}^{\beta}-h_{li}^{\beta}h_{pj}^{\alpha}h_{plk}^{\beta}. 
	\end{aligned}
\end{equation*}
Since $M$ is minimal, we get
\begin{equation*}
	\begin{aligned}
	h_{ipl}^{\alpha}R_{pjkl}=&h_{ijk}^{\alpha}+h_{ipl}^{\alpha}h_{pk}^{\beta}h_{jl}^{\beta}-h_{ipl}^{\alpha}h_{pl}^{\beta}h_{jk}^{\beta},\\
	h_{pjl}^{\alpha}R_{pikl}=&h_{ijk}^{\alpha}+h_{pjl}^{\alpha}h_{pk}^{\beta}h_{il}^{\beta}-h_{pjl}^{\alpha}h_{pl}^{\beta}h_{ik}^{\beta},\\
	h_{ijl}^{\beta}R_{\alpha\beta kl}=&h_{ijl}^{\beta}h_{pk}^{\alpha}h_{pl}^{\beta}-h_{ijl}^{\beta}h_{pl}^{\alpha}h_{pk}^{\beta},\\
		h_{ijp}^{\alpha}R_{plkl}=&(n-1)h_{ijk}^{\alpha}+h_{ijp}^{\alpha}h_{pk}^{\beta}h_{ll}^{\beta}-h_{ijp}^{\alpha}h_{pl}^{\beta}h_{kl}^{\beta}\\
		=&(n-1)h_{ijk}^{\alpha}-h_{ijp}^{\alpha}h_{pl}^{\beta}h_{kl}^{\beta},\\
		h_{ip}^{\alpha}R_{pjkll}=&h_{ip}^{\alpha}h_{pkl}^{\beta}h_{jl}^{\beta}+h_{ip}^{\alpha}h_{pk}^{\beta}h_{jll}^{\beta}-h_{ip}^{\alpha}h_{pll}^{\beta}h_{jk}^{\beta}-h_{ip}^{\alpha}h_{pl}^{\beta}h_{jkl}^{\beta}\\
		=&h_{ip}^{\alpha}h_{pkl}^{\beta}h_{jl}^{\beta}-h_{ip}^{\alpha}h_{pl}^{\beta}h_{jkl}^{\beta},\\
		h_{pj}^{\alpha}R_{pikll}=&h_{pj}^{\alpha}h_{pkl}^{\beta}h_{il}^{\beta}+h_{pj}^{\alpha}h_{pk}^{\beta}h_{ill}^{\beta}-h_{pj}^{\alpha}h_{pll}^{\beta}h_{ik}^{\beta}-h_{pj}^{\alpha}h_{pl}^{\beta}h_{ikl}^{\beta}\\
		=&h_{pj}^{\alpha}h_{pkl}^{\beta}h_{il}^{\beta}-h_{pj}^{\alpha}h_{pl}^{\beta}h_{ikl}^{\beta},\\
		h_{ij}^{\beta}R_{\alpha\beta kll}=&h_{ij}^{\beta}h_{pkl}^{\alpha}h_{pl}^{\beta}+h_{ij}^{\beta}h_{pk}^{\alpha}h_{pll}^{\beta}-h_{ij}^{\beta}h_{pll}^{\alpha}h_{pk}^{\beta}-h_{ij}^{\beta}h_{pl}^{\alpha}h_{pkl}^{\beta}\\
		=&h_{ij}^{\beta}h_{pkl}^{\alpha}h_{pl}^{\beta}-h_{ij}^{\beta}h_{pl}^{\alpha}h_{pkl}^{\beta}.
	\end{aligned}
\end{equation*}
Therefore, combining the equations above, we obtain
\begin{align}
		\Delta h_{ijk}^{\alpha}=\sum_{l}h_{ijkll}^{\alpha}=&\sum_{l}\left(h_{ijlk}^{\alpha}+h_{ip}^{\alpha}R_{pjkl}+h_{pj}^{\alpha}R_{pikl}-h_{ij}^{\beta}R_{\alpha\beta kl}\right)_l\label{ric}\\
		=&(\Delta h_{ij}^{\alpha})_{k}+2h_{ipl}^{\alpha}R_{pjkl}+2h_{pjl}^{\alpha}R_{pikl}-2h_{ijl}^{\beta}R_{\alpha\beta kl}+h_{ijp}^{\alpha}R_{plkl}\notag\\
		&+h_{ip}^{\alpha}R_{pjkll}+h_{pj}^{\alpha}R_{pikll}-h_{ij}^{\beta}R_{\alpha\beta kll}\notag\\
		=&(2n+3)h_{ijk}^{\alpha}+2\sum_{p,l}(h_{lpk}^{\alpha}h_{pj}^{\beta}h_{il}^{\beta}+h_{lp}^{\alpha}h_{pjk}^{\beta}h_{il}^{\beta}+h_{lp}^{\alpha}h_{pj}^{\beta}h_{ilk}^{\beta}\notag\\
		&+h_{ipl}^{\alpha}h_{pk}^{\beta}h_{jl}^{\beta}+h_{pjl}^{\alpha}h_{pk}^{\beta}h_{il}^{\beta}+h_{lp}^{\alpha}h_{pk}^{\beta}h_{ijl}^{\beta}-h_{pi}^{\alpha}h_{pl}^{\beta}h_{jlk}^{\beta}-h_{pjl}^{\alpha}h_{pl}^{\beta}h_{ik}^{\beta}\notag\\
		&-h_{ipl}^{\alpha}h_{pl}^{\beta}h_{jk}^{\beta}-h_{pj}^{\alpha}h_{pl}^{\beta}h_{ikl}^{\beta}-h_{pk}^{\alpha}h_{pl}^{\beta}h_{ijl}^{\beta}-h_{lpk}^{\alpha}h_{pl}^{\beta}h_{ij}^{\beta})\notag\\&
		-\sum_{p,l}(h_{lp}^{\alpha}h_{pl}^{\beta}h_{ijk}^{\beta}
		+h_{pik}^{\alpha}h_{pl}^{\beta}h_{jl}^{\beta}+h_{pjk}^{\alpha}h_{li}^{\beta}h_{pl}^{\beta}+h_{ijp}^{\alpha}h_{pl}^{\beta}h_{lk}^{\beta}).\notag
\end{align}
Since
\[\frac{1}{2}\Delta(|\nabla h|^2)=\sum_{i,j,k,\alpha}h_{ijk}^{\alpha}\Delta h_{ijk}^{\alpha}+|\nabla^2 h|^2,\]
using \eqref{ric}, a direct computation yields \eqref{sum}.
\end{proof}
From now on, we consider the case that the normal bundle of $M$ is flat.
\begin{prop}
	Let $M^n$ be a closed minimal submanifold in $\mathbb{S}^{n+m}$ with flat normal bundle. If $S$ is constant and $n\leqslant S\leqslant n+\delta\leqslant2n$, then
	\begin{equation}\label{Sconstant-general}
		|\nabla h|^2=-nS+|\mathcal{A}|^2=S(S-n)-2\sigma_2,
	\end{equation}
	and
	\begin{equation}\label{eq:b-eps}
		0\leqslant b\leqslant \frac{S-\sqrt{S(2n-S)}}{2}\leqslant\frac{n+\delta-\sqrt{n^2-\delta^2}}{2}=:\epsilon.
	\end{equation}
\end{prop}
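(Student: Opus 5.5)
Both claims follow from the Simons formula \eqref{Simons} together with the flat-normal-bundle hypothesis, which lets us diagonalize $\mathcal A$ and control its eigenvalues. The first step is to observe that flatness of the normal bundle means $R_{\alpha\beta kl}=0$ for all indices. By the Ricci equation this is equivalent to $[A^\alpha,A^\beta]=0$ for all $\alpha,\beta$, so the commutator term in \eqref{Simons} vanishes identically. Since $S$ is constant, $\Delta S=0$, and \eqref{Simons} reduces pointwise to
\[
|\nabla h|^2=-nS+\sum_{\alpha,\beta}\langle A^\alpha,A^\beta\rangle^2=-nS+|\mathcal A|^2 .
\]
The frame-independent quantity $|\mathcal A|^2=\operatorname{tr}(\mathcal A^2)$ equals $\sum_r a_r^2=(\sum_r a_r)^2-2\sigma_2$. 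Because $\operatorname{tr}\mathcal A=S$, this gives $|\mathcal A|^2=S^2-2\sigma_2$, and hence \eqref{Sconstant-general} follows.

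For \eqref{eq:b-eps}, the nonnegativity $b=\sum_{r\ge2}a_r\geqslant0$ is immediate from $\mathcal A$ being positive semidefinite. For the upper bound, we use $|\nabla h|^2\geqslant0$ in \eqref{Sconstant-general}, which gives $2\sigma_2\leqslant S(S-n)$ pointwise. We then need a lower bound for $\sigma_2$ in terms of $b$. Writing $\sigma_2=a_1 b+\sum_{2\le r<s}a_ra_s\geqslant a_1 b=(S-b)b$, since all $a_r\geqslant0$, we obtain
\[
2b(S-b)\leqslant S(S-n),\qquad\text{i.e.}\qquad 2b^2-2Sb+S(S-n)\geqslant0 .
\]
The roots of this quadratic in $b$ are $\frac{S\pm\sqrt{S^2-2S(S-n)}}{2}=\frac{S\pm\sqrt{S(2n-S)}}{2}$, and they are real because $S\leqslant 2n$. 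The inequality therefore forces $b$ to be at most the smaller root or at least the larger root.

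The point requiring care is excluding the branch $b\geqslant\frac{S+\sqrt{S(2n-S)}}{2}$. Since $a_1\geqslant a_r$ for every $r$, we have $a_1\geqslant b/(m-1)$, but this alone does not yield $b\leqslant S/2$ when $m\geqslant3$. Instead I would argue by continuity: $b$ is a continuous function on $M$, because eigenvalues of $\mathcal A$ depend continuously on the point. Pointwise, $b$ can never lie strictly between the two roots, and the two roots are distinct when $S<2n$. Therefore $b$ stays on a single branch over the connected manifold $M$. To decide which branch, I need one point where $b$ is small. One option is to find a point where $A^{\alpha}$ has a distinguished direction, for instance a maximum point of $a_1$. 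A cleaner option is an integral argument: integrating $|\nabla h|^2=S(S-n)-2\sigma_2$ and comparing with known facts is not obviously enough.

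Failing those, I would use the sharper pointwise bound $\sigma_2\geqslant a_1b$ together with $\sigma_2\leqslant S^2(m-1)/(2m)$, or simply note the following. Set $a_1'=S-b$. On the large branch $b\geqslant S/2$ would give $a_1\leqslant S/2$, and then $\sigma_2=\tfrac12(S^2-\sum a_r^2)\geqslant \tfrac12(S^2-a_1S)\geqslant S^2/4$, using $\sum a_r^2\leqslant a_1\sum a_r$. This yields $S(S-n)\geqslant S^2/2$, i.e. $S\geqslant 2n$, contradicting $S\leqslant n+\delta<2n$ (the case $S=0$ is trivial). This rules out the large branch pointwise, with no continuity argument needed, so $b\leqslant \frac{S-\sqrt{S(2n-S)}}{2}$.

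Finally, the function $S\mapsto S-\sqrt{S(2n-S)}$ is increasing on $[n,2n]$, since $S(2n-S)$ decreases there. Evaluating at $S=n+\delta$ gives $\sqrt{(n+\delta)(n-\delta)}=\sqrt{n^2-\delta^2}$, which yields the last inequality defining $\epsilon$. The main obstacle is the branch selection, handled above by the estimate $\sum a_r^2\leqslant a_1 S$.
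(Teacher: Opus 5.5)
Your argument is correct and is essentially the paper's: your bound $\sigma_2\geqslant a_1b=(S-b)b$ is equivalent to the paper's $|\mathcal A|^2\leqslant (S-b)^2+b^2$, and your exclusion of the large root via $\sum_r a_r^2\leqslant a_1S$ is the contrapositive of the paper's step $nS\leqslant|\mathcal A|^2\leqslant a_1S$, which gives $a_1\geqslant n$ and hence $b\leqslant S-n\leqslant S/2$ directly, so the continuity and integral detours are unnecessary. The hypothesis only gives $n+\delta\leqslant 2n$, not $n+\delta<2n$, so phrase the contradiction with strict inequalities ($b>S/2\Rightarrow a_1<S/2\Rightarrow S>2n$); this also covers the endpoint $S=2n$, where the two roots coincide and you still need $b\leqslant S/2$.
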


\begin{proof}
	Since the normal bundle of $M$ is flat, we can choose a local orthonormal frame $\{e_{1},e_{2},\dots,e_{n+m}\}$ so that at any fixed point 
		$\langle A^\alpha, A^\beta\rangle=0$ for all $\alpha\neq\beta$ and
		$h_{ij}^\alpha=\lambda_i^\alpha\delta_{ij}$ for every $\alpha$. Thus
		\[|\mathcal{A}|^2=S^2-2\sigma_2.\]
	Then by \eqref{Simons}, since $S$ is constant, \eqref{Sconstant-general} follows. Hence, $|\nabla h|^2\geqslant 0$ yields $|\mathcal{A}|^2\geqslant nS$. On the other hand,
	$|\mathcal{A}|^2=\sum\limits_{r=1}^{m} a_r^2\leqslant a_1\sum\limits_{r=1}^{m} a_r=a_1S$, hence $a_1\geqslant n$. Since $S\leqslant 2n$, we have 
	\begin{equation}\label{b}
		0\leqslant b=S-a_1\leqslant S-n\leqslant \frac{S}{2}.
	\end{equation} 
 Moreover,
	\[
	nS\leqslant|\mathcal{A}|^2=\sum_{r=1}^{m} a_r^2
	=a_1^2+\sum_{r\geqslant2}a_r^2
	\leqslant(S-b)^2+b^2.
	\]
	Thus $2b^2-2Sb+S(S-n)\geqslant0$. By \eqref{b}, this quadratic inequality yields 
	\[b\leqslant \frac{S-\sqrt{S(2n-S)}}{2}.\] The function
	$g(S)=\frac{S-\sqrt{S(2n-S)}}{2}$ is increasing on $[n,n+\delta]$, so
	\eqref{eq:b-eps} holds.
\end{proof}
Following the Peng--Terng invariant in the hypersurface case, we define
\begin{equation}\label{P-T type}
A_{\alpha,\beta}:=\sum_{i,j,k,l,p}h_{ijk}^{\beta}h_{ijp}^{\beta}h_{pl}^{\alpha}h_{lk}^{\alpha}, \quad B_{\alpha,\beta}:=\sum_{i,j,k,l,p}h_{ijk}^{\beta}h_{lpk}^{\beta}h_{pj}^{\alpha}h_{il}^{\alpha}.
\end{equation}
Thus, $\sum\limits_{\alpha,\beta}A_{\alpha,\beta}$ and $\sum\limits_{\alpha,\beta}B_{\alpha,\beta}$ are globally smooth functions, independent of the choice of local frames.
\begin{prop}
	Let $M^n$ be a closed minimal submanifold in the unit sphere $\mathbb{S}^{n+m}$ with flat normal bundle. Then
\begin{equation}\label{eq:simons-gradient}
	\begin{aligned}
		\frac{1}{2}\Delta(|\nabla h|^2)=&(2n+3)|\nabla h|^2-3\sum_{\alpha,\beta}(A_{\alpha,\beta}-2B_{\alpha,\beta})-6\sum_{k,\alpha,\beta}\langle A^{\alpha},\nabla_{e_k}A^{\beta}\rangle^2\\
		&-\sum_{\alpha,\beta}\langle A^{\alpha},A^{\beta}\rangle\sum_{k}\langle\nabla_{e_k}A^{\alpha},\nabla_{e_k}A^{\beta}\rangle+|\nabla^2 h|^2.
	\end{aligned}
\end{equation}
\end{prop}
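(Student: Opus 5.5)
The identity \eqref{eq:simons-gradient} is pointwise, and every term in it is frame-independent: the two sides of \eqref{sum}, the sums $\sum_{\alpha,\beta}A_{\alpha,\beta}$ and $\sum_{\alpha,\beta}B_{\alpha,\beta}$, and the two quadratic expressions in $\langle A^\alpha,\nabla_{e_k}A^\beta\rangle$ and $\langle A^\alpha,A^\beta\rangle\langle\nabla_{e_k}A^\alpha,\nabla_{e_k}A^\beta\rangle$ (full contractions over $\alpha,\beta,k$). So I will fix a point and evaluate \eqref{sum} in a convenient frame. Since the normal bundle is flat, $R_{\alpha\beta kl}=0$, so the shape operators commute. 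As in the proof of \eqref{Sconstant-general}, I can therefore choose the frame so that $h^\alpha_{ij}=\lambda^\alpha_i\delta_{ij}$ for every $\alpha$ at that point. Throughout I use the Codazzi symmetry of $h^\alpha_{ijk}$ in $i,j,k$.

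I then go through the six quartic terms of \eqref{sum} one at a time, summed over $\alpha,\beta$.

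\emph{First two terms.} By definition \eqref{P-T type}, with the roles of $\alpha,\beta$ swapped, which is harmless since we sum over both, the first term is $6\sum_{\alpha,\beta}B_{\alpha,\beta}$ and the second is $-3\sum_{\alpha,\beta}A_{\alpha,\beta}$. Together they give $-3\sum_{\alpha,\beta}(A_{\alpha,\beta}-2B_{\alpha,\beta})$.

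\emph{Last term.} It equals $-\sum_{\alpha,\beta}\langle A^\alpha,A^\beta\rangle\sum_k\langle\nabla_{e_k}A^\alpha,\nabla_{e_k}A^\beta\rangle$ directly, with no frame choice needed.

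\emph{Third, fourth and fifth terms.} These are the only ones where the diagonal frame is used.
\begin{itemize}
\item In the third term, setting $p=l$ and $i=k$ gives
\[
-6\sum_{\alpha,\beta,j}\Big(\sum_i\lambda^\beta_i h^\alpha_{iij}\Big)\Big(\sum_p\lambda^\beta_p h^\alpha_{ppj}\Big)=-6\sum_{j,\alpha,\beta}\langle A^\beta,\nabla_{e_j}A^\alpha\rangle^2 .
\]
Here I use $h^\alpha_{iji}=h^\alpha_{iij}$ and $\langle A^\beta,\nabla_{e_j}A^\alpha\rangle=\sum_i\lambda^\beta_i h^\alpha_{iij}$.
\item In the fourth term, setting $l=p=i$ gives $6\sum\lambda^\alpha_i\lambda^\beta_i h^\alpha_{ijk}h^\beta_{ijk}$.
\item In the fifth term, setting $p=i$ and $l=i$ gives $-6\sum\lambda^\alpha_i\lambda^\beta_i h^\alpha_{ijk}h^\beta_{jik}$. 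By Codazzi symmetry this is exactly the negative of the fourth term.
\end{itemize}
So the fourth and fifth terms cancel, and the third term becomes the $-6\sum\langle A^\alpha,\nabla_{e_k}A^\beta\rangle^2$ term.

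Adding these contributions to $(2n+3)|\nabla h|^2+|\nabla^2h|^2$ yields \eqref{eq:simons-gradient}. The only real care needed is the index bookkeeping in the third through fifth terms. In particular, I must check that the cancellation of the fourth and fifth terms and the square structure of the third term genuinely require the simultaneous diagonalization, which is where flatness of the normal bundle enters, and that each identification is legitimate because it is an identity between frame-invariant quantities.
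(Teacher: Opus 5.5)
Your proof is correct and is essentially the paper's argument. The paper shows that the fourth and fifth terms of \eqref{sum} combine into $3\sum_{\alpha,\beta}\langle[A^{\alpha},A^{\beta}],\sum_k[\nabla_{e_k}A^{\beta},\nabla_{e_k}A^{\alpha}]\rangle$, which vanishes because flatness gives $[A^\alpha,A^\beta]=0$; you obtain the same cancellation in a simultaneously diagonalizing frame, and the remaining terms are identified as you do. One small correction to your closing remark: the third term equals $-6\sum_{k,\alpha,\beta}\langle A^{\alpha},\nabla_{e_k}A^{\beta}\rangle^2$ in any frame by Codazzi symmetry alone, so flatness is needed only for the cancellation of the fourth and fifth terms.
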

\begin{proof}
	A direct expansion gives
	\begin{equation}\label{vanish}
		\begin{aligned}
		\sum_{i,j,k,p,l,\alpha,\beta}(6h_{ijk}^{\alpha}h_{lp}^{\alpha}h_{pjk}^{\beta}h_{il}^{\beta}-6h_{ijk}^{\alpha}h_{pi}^{\alpha}h_{pl}^{\beta}h_{jlk}^{\beta})=&6\sum_{\alpha,\beta}\langle[A^{\alpha},A^{\beta}],\sum_{k}\nabla_{e_k}A^{\beta}\nabla_{e_k}A^{\alpha}\rangle\\
		=&3\sum_{\alpha,\beta}\langle[A^{\alpha},A^{\beta}],\sum_{k}[\nabla_{e_k}A^{\beta},\nabla_{e_k}A^{\alpha}]\rangle.
		\end{aligned}
	\end{equation}
	Since the normal bundle of $M$ is flat, (\ref{vanish}) vanishes. Then (\ref{sum}) becomes
	\begin{equation}\label{high}
		\begin{aligned}
			\frac{1}{2}\Delta(|\nabla h|^2)
			=&(2n+3)|\nabla h|^2+\sum_{i,j,k,p,l,\alpha,\beta}(6h_{ijk}^{\alpha}h_{lpk}^{\alpha}h_{pj}^{\beta}h_{il}^{\beta}\\
			&-3h_{ijk}^{\alpha}h_{ijp}^{\alpha}h_{pl}^{\beta}h_{lk}^{\beta}-6h_{ijk}^{\alpha}h_{pjl}^{\alpha}h_{pl}^{\beta}h_{ik}^{\beta}-h_{ijk}^{\alpha}h_{pl}^{\alpha}h_{pl}^{\beta}h_{ijk}^{\beta})+|\nabla^2 h|^2.
		\end{aligned}
	\end{equation}
Thus, \eqref{eq:simons-gradient} follows directly from \eqref{high}.
\end{proof}
We also need the following sharp algebraic inequality for subsequent estimates.
\begin{lem}\label{inesharp}
	Let $n\geqslant3$, and suppose that $x=(x_i)$ and $y=(y_i)$ in $\mathbb R^n$ satisfy
	\[
	\sum_i x_i=\sum_i y_i=0,
	\qquad \sum_i x_i y_i=0,
	\qquad \sum_i x_i^2=X,
	\qquad \sum_i y_i^2=Y.
	\]
	Then
	\begin{align}
		\sum_i x_i^2y_i^2&\leqslant \frac{n-2}{2n}XY, \label{eq:opt-three}
	\end{align}
	where equality holds if and only if $x_{i}y_{j}+x_{j}y_{i}=-\frac{2}{n-2}(x_{i}y_{i}+x_{j}y_{j})$ for $i\neq j$.\\
In addition,
	\begin{align}
		 |x_i y_i|&\leqslant \frac{n-1}{2n}\sqrt{XY}, \label{eq:opt-same-index}\\
		 |x_i y_j|&\leqslant \frac{n-1+\sqrt{n(n-2)}}{2n}\sqrt{XY},\quad i\ne j. \label{eq:opt-two}
	\end{align}
\end{lem}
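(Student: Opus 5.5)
The plan is to treat \eqref{eq:opt-three} by a sum-of-squares identity tailored to the stated equality case, and to treat \eqref{eq:opt-same-index} and \eqref{eq:opt-two} by viewing $x,y$ as an orthogonal pair in the hyperplane $H=\{\sum_i v_i=0\}$. By homogeneity we may assume $X=Y=1$ in the last two inequalities; the degenerate cases $X=0$ or $Y=0$ are trivial.

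\textbf{Proof of \eqref{eq:opt-three}.} Set $z_i=x_iy_i$ and $t_{ij}=x_iy_j+x_jy_i$. The hypotheses give $\sum_i z_i=0$ and, for each $i$,
\[
\sum_j t_{ij}=x_i\sum_j y_j+y_i\sum_j x_j=0,
\qquad\text{so}\qquad
\sum_{j\ne i}t_{ij}=-2z_i .
\]
Expanding $\sum_{i,j}t_{ij}^2$ and using $\sum_i x_iy_i=0$ gives $\sum_{i,j}t_{ij}^2=2XY$. Since the diagonal terms are $t_{ii}=2z_i$, we get $\sum_{i\ne j}t_{ij}^2=2XY-4\sum_i z_i^2$.

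Put $c=\frac{2}{n-2}$ and expand $0\leqslant\sum_{i\ne j}\bigl(t_{ij}+c(z_i+z_j)\bigr)^2$. We need the following three ingredients.
\begin{itemize}
\item The cross term: $\sum_{i\ne j}t_{ij}(z_i+z_j)=2\sum_i z_i\sum_{j\ne i}t_{ij}=-4\sum_i z_i^2$.
\item The square term: $\sum_{i\ne j}(z_i+z_j)^2=2(n-1)\sum_i z_i^2+2\bigl((\sum_i z_i)^2-\sum_i z_i^2\bigr)=2(n-2)\sum_i z_i^2$.
\item The off-diagonal identity $\sum_{i\ne j}t_{ij}^2=2XY-4\sum_i z_i^2$ from above.
\end{itemize}
Substituting these yields
\[
0\leqslant 2XY-\Bigl(4+8c-2(n-2)c^2\Bigr)\sum_i z_i^2=2XY-\frac{4n}{n-2}\sum_i z_i^2 ,
\]
which is exactly \eqref{eq:opt-three}. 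Equality holds iff every square vanishes, i.e.\ $t_{ij}=-\frac{2}{n-2}(z_i+z_j)$ for all $i\ne j$, which is the stated characterization.

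\textbf{Proof of \eqref{eq:opt-same-index}.} Let $u=(x+y)/\sqrt2$ and $v=(x-y)/\sqrt2$. These are unit vectors in $H$, so $x_iy_i=\frac12(u_i^2-v_i^2)$. For any unit $w\in H$ we have $w_i=\langle w,Pe_i\rangle$, where $P$ is the orthogonal projection onto $H$ and $|Pe_i|^2=\frac{n-1}{n}$. Hence $w_i^2\leqslant\frac{n-1}{n}$. Therefore $u_i^2,v_i^2\in[0,\frac{n-1}{n}]$, and so $|x_iy_i|\leqslant\frac{n-1}{2n}$.

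\textbf{Proof of \eqref{eq:opt-two}.} Write $x_iy_j=\langle x,a\rangle\langle y,b\rangle$ with $a=Pe_i$ and $b=Pe_j$. These satisfy $|a|^2=|b|^2=\frac{n-1}{n}$ and $\langle a,b\rangle=-\frac1n$, and $x,y$ range over orthonormal pairs in $H$.

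First reduce to the plane $\Pi=\mathrm{span}(a,b)$. Replacing $x,y$ by their components in $\Pi$ and renormalizing only increases $|\langle x,a\rangle\langle y,b\rangle|$. Some care is needed here: the projected vectors need not remain orthogonal. So I would argue directly by maximizing the bilinear form $x^{T}(ab^{T})y$ over orthonormal pairs, for instance via Lagrange multipliers, whose critical pairs lie in $\Pi$.

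In $\Pi$, let $\theta$ be the angle between $a$ and $b$, and take $x$ at angle $\varphi$ and $y=x^{\perp}$. The value is
\[
|a||b|\cos\varphi\,\cos\bigl(\varphi+\tfrac{\pi}{2}-\theta\bigr),
\]
whose maximum is $\frac{|a||b|}{2}(1+\sin\theta)$. Here $\cos\theta=-\frac{1}{n-1}$, so $\sin\theta=\frac{\sqrt{n(n-2)}}{n-1}$, and the maximum is $\frac{n-1+\sqrt{n(n-2)}}{2n}$, as claimed.

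The main obstacle is making the reduction to $\Pi$ in the last step rigorous while preserving orthogonality. The fallback is the Lagrange multiplier computation for $\max\,\langle x,a\rangle\langle y,b\rangle$ subject to $|x|=|y|=1$ and $\langle x,y\rangle=0$.
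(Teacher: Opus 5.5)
Your proof of \eqref{eq:opt-three} is correct and is the paper's argument. The paper expands $\sum_{i<j}\bigl(s_{ij}-\frac{R_i+R_j}{n-2}\bigr)^2\geqslant0$ with $R_i=-2d_i$, which is your square with $c=\frac{2}{n-2}$, summed over unordered instead of ordered pairs. Your proof of \eqref{eq:opt-same-index} via $u=(x+y)/\sqrt2$, $v=(x-y)/\sqrt2$ is a valid variant of the paper's Bessel-plus-AM--GM step.

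\textbf{The gap is in \eqref{eq:opt-two}.} The two-dimensional trigonometric computation is the easy part. The real content is that the maximum of $f(x,y)=\langle x,a\rangle\langle y,b\rangle$ over orthonormal pairs in $H$ (which has dimension $n-1$) is attained by a pair lying in $\Pi=\mathrm{span}(a,b)$. You assert that the Lagrange critical pairs lie in $\Pi$ but never verify it. As you note yourself, the projection argument does not work, because projecting onto $\Pi$ destroys orthogonality. So as written you have only proved \eqref{eq:opt-two} for pairs $x,y\in\Pi$.

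\textbf{How to close it.} The set of orthonormal pairs is compact, and the constraint gradients $(2x,0)$, $(0,2y)$, $(y,x)$ are linearly independent on it. Hence at a maximizer
\[
\langle y,b\rangle\,a=2\lambda x+\nu y,\qquad \langle x,a\rangle\,b=\nu x+2\mu y
\]
for some multipliers $\lambda,\mu,\nu$. The steps are then:
\begin{enumerate}
\item The maximum is positive: take $x=a/\|a\|$ and $y\in\Pi$ the unit vector orthogonal to $x$ with $\langle y,b\rangle>0$.
\item Hence $\langle y,b\rangle\neq0\neq\langle x,a\rangle$, so $a,b\in\mathrm{span}(x,y)$.
\item Since $\cos\theta=-\frac{1}{n-1}\neq\pm1$, $a$ and $b$ are independent, which forces $\mathrm{span}(x,y)=\Pi$.
\item Replacing $y$ by $-y$ shows $\max|f|=\max f$, and your planar computation then finishes.
\end{enumerate}

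The paper avoids the reduction altogether. Set $p=\langle x,a\rangle$, $q=\langle y,b\rangle$, $r=\langle y,a\rangle$, $s=\langle x,b\rangle$; then $2|pq|\leqslant|pq+rs|+|pq-rs|$. By Bessel, $|pq+rs|=|\langle a,qx+sy\rangle|\leqslant\|a\|\sqrt{q^2+s^2}\leqslant\|a\|\|b\|$. Also $|pq-rs|=|\langle x\wedge y,a\wedge b\rangle|\leqslant\|a\|\|b\|\,|\sin\theta|$. This gives the bound directly for every orthonormal pair, with no existence-of-maximizer or critical-point analysis.
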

\begin{proof}
	Define $d_i:=x_i y_i$ and
	$s_{ij}:=x_i y_j+x_j y_i$ for $i\ne j$.  Then
	\begin{equation}\label{Ri}
	\sum_i d_i=0,\qquad R_{i}:=\sum_{j\ne i}s_{ij}=-2d_i,
	\end{equation}
	and
	\begin{equation}\label{eq:offdiag-identity}
		\begin{aligned}
			\sum_{i<j}s_{ij}^2=&\sum_{i<j}(x_i^2y_j^2+x_j^2y_i^2+2x_{i}x_{j}y_{i}y_{j})\\
			=&\sum_{i\neq j}x_i^2y_j^2+(\sum_{i}x_{i}y_{i})^2-\sum_{i}x_{i}^2y_{i}^2\\
			=&XY-2\sum_i x_i^2y_i^2.
		\end{aligned}
	\end{equation}
	It follows from \eqref{Ri}, and by expanding
	$\sum\limits_{i<j}(s_{ij}-\frac{R_{i}+R_{j}}{n-2})^2\geqslant0$, the sharp Cauchy--Schwarz inequality on the complete graph says that
	\[
	\sum_{i<j}s_{ij}^2\geqslant\frac1{n-2}\sum_iR_i^2.
	\]
	Applying this with $R_i=-2d_i$ and
	using \eqref{eq:offdiag-identity} gives
	\[
	XY-2\sum_i x_i^2y_i^2\geqslant\frac{4}{n-2}\sum_i x_i^2y_i^2,
	\]
	so \eqref{eq:opt-three} holds. Moreover, equality holds if and only if $s_{ij}=\frac{R_{i}+R_{j}}{n-2}$ for $i\neq j$.
	
	For \eqref{eq:opt-same-index} and \eqref{eq:opt-two}, the case $XY=0$ is trivial. We therefore assume $X,Y>0$ and normalize. Set
			\[
			\tilde x_i := \frac{x_i}{\sqrt X}, \qquad 
			\tilde y_i := \frac{y_i}{\sqrt Y}.
			\]
			Then
			\[
			\sum_i \tilde x_i = \sum_i \tilde y_i = 0,\qquad 
			\sum_i \tilde x_i^2 = \sum_i \tilde y_i^2 = 1,\qquad 
			\sum_i \tilde x_i \tilde y_i = 0.
			\]
			It suffices to prove
			\[
			|\tilde x_i \tilde y_i| \leqslant \frac{n-1}{2n},
			\]
			because multiplying by $\sqrt{XY}$ gives the general case.
			Let 
			\[
			H := \left\{ z = (z_1,\dots,z_n) \in \mathbb R^n : \sum_{j=1}^n z_j = 0 \right\},
			\]
			which is an $(n-1)$-dimensional subspace of $\mathbb R^n$. For each index $i$, define
			\[
			v_i := e_i - \frac{1}{n}(1,1,\dots,1),
			\]
			where $e_i$ is the $i$-th standard basis vector. Clearly $v_i \in H$, and
			\[
			\|v_i\|^2 = \frac{n-1}{n^2}+\frac{(n-1)^2}{n^2} = \frac{n-1}{n}. 
			\]
			Since both $\tilde x = (\tilde x_i)$ and $\tilde y = (\tilde y_i)$ lie in $H$, for any $z \in H$ we have
			\[
			z_i = \langle z, e_i \rangle = \langle z, v_i \rangle,
			\]
			because $\langle z, \frac{1}{n}(1,\dots,1)\rangle = \frac{1}{n}\sum\limits_j z_j = 0$. 			
			Moreover, $\sum\limits_i \tilde x_i \tilde y_i = 0$ means $\langle \tilde x, \tilde y \rangle = 0$, so $\tilde x$ and $\tilde y$ are orthonormal vectors in $H$. 
			Notice that
			\[
			\tilde x_i = \langle \tilde x, v_i \rangle, \qquad 
			\tilde y_i = \langle \tilde y, v_i \rangle. 
			\]
			Therefore,
			\[
			\tilde x_i^2 + \tilde y_i^2 \leqslant \|v_i\|^2 = \frac{n-1}{n}.
			\]
			Thus,
			\[
			|\tilde x_i \tilde y_i| \leqslant \frac{1}{2}(\tilde x_i^2 + \tilde y_i^2)\leqslant\frac{n-1}{2n},
			\]
  which	proves \eqref{eq:opt-same-index}.

		For \eqref{eq:opt-two}, it is enough to prove
					\[
					|\tilde x_i \tilde y_j| \leqslant \frac{n-1+\sqrt{n(n-2)}}{2n} \qquad (i\neq j).
					\]
		For $i\neq j$, the vectors $v_i$ and $v_j$ have the same norm:
					\[
					\|v_i\|^2 = \|v_j\|^2 = \frac{n-1}{n},
					\]
		and their inner product is
					\[
					\langle v_i, v_j \rangle = -\frac{1}{n}.
					\]
		Therefore the angle $\phi$ between $v_i$ and $v_j$ satisfies
					\[
					\cos\phi = \frac{\langle v_i, v_j \rangle}{\|v_i\|\|v_j\|}
					= -\frac{1}{n-1},
					\]
			so
					\[
					|\sin\phi| = \sqrt{1 - \frac{1}{(n-1)^2}}
					= \frac{\sqrt{n(n-2)}}{n-1}.
					\]
					
We shall use the following elementary optimization. For two fixed vectors
$u,v$ in an inner product space and for any orthonormal pair $x,y$, one has
\[
|\langle x,u\rangle\langle y,v\rangle|
\leqslant \frac{\|u\|\|v\|}{2}\bigl(1+|\sin\phi|\bigr),
\]
where $\phi$ is the angle between $u$ and $v$. Indeed, set
$a=\langle x,u\rangle$, $b=\langle y,v\rangle$,
$c=\langle y,u\rangle$, and $d=\langle x,v\rangle$. Then
\[
2|ab|\leqslant |ab+cd|+|ab-cd|.
\]
The first term satisfies
\[
|ab+cd|
=\big|\langle u,\langle y,v\rangle x+\langle x,v\rangle y\rangle\big|
\leqslant \|u\|\sqrt{\langle y,v\rangle^2+\langle x,v\rangle^2}
\leqslant \|u\|\|v\|.
\]
For the second term, using the exterior product gives
\[
|ab-cd|=|\langle x\wedge y,u\wedge v\rangle|
\leqslant \|u\wedge v\|
=\|u\|\|v\|\,|\sin\phi|.
\]
Combining these two estimates proves the claimed inequality.

Applying this with $u=v_i$ and $v=v_j$, we obtain
					\[
					|\tilde x_i \tilde y_j|
					= |\langle \tilde x, v_i\rangle \langle \tilde y, v_j\rangle|
					\leqslant \frac{\|v_i\|\|v_j\|}{2}\left(1 + \frac{\sqrt{n(n-2)}}{n-1}\right).
					\]
					Since $\|v_i\|\|v_j\| = \frac{n-1}{n}$, this becomes
					\[
					|\tilde x_i \tilde y_j|
					\leqslant \frac{n-1}{2n}\left(1 + \frac{\sqrt{n(n-2)}}{n-1}\right)
					= \frac{n-1+\sqrt{n(n-2)}}{2n},
					\]
				which proves \eqref{eq:opt-two}.
\end{proof}
\section{Some lemmas and key estimates}\label{3}
Near any given point $q\in M$, since the normal bundle of $M$ is flat, we can choose a local orthonormal frame $\{e_{1},e_{2},\dots,e_{n+m}\}$ so that at $q$ 
\begin{equation}\label{eq:diag}
	\langle A^\alpha, A^\beta\rangle=0\quad \text{for }\alpha\neq\beta,\qquad
	h_{ij}^\alpha=\lambda_i^\alpha\delta_{ij}\quad\text{for all }\alpha.
\end{equation}
We first record a useful consequence of the flatness of the normal bundle.
\begin{lem}\label{fine}
	Let $M^n$ be a closed minimal submanifold in the unit sphere $\mathbb{S}^{n+m}$ with flat normal bundle. Then for any $k$, $\alpha$ and $\beta$ we have
	\begin{equation}\label{flatness-plus}
		\langle A^{\alpha},\nabla_{e_k}A^{\beta}\rangle=\sum_{i}\lambda_{i}^{\alpha}h_{iik}^{\beta}=\sum_{i}\lambda_{i}^{\beta}h_{iik}^{\alpha}=\langle A^{\beta},\nabla_{e_k}A^{\alpha}\rangle.
	\end{equation}
\end{lem}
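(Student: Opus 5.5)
The identity \eqref{flatness-plus} is pointwise. I will verify it at the point $q$ in the adapted frame \eqref{eq:diag}; both outer expressions are frame-invariant under the normal-frame changes allowed here, so this suffices. At $q$ the first and last equalities are immediate: since $h_{ij}^\alpha=\lambda_i^\alpha\delta_{ij}$, we get $\langle A^\alpha,\nabla_{e_k}A^\beta\rangle=\sum_{i,j}h^\alpha_{ij}h^\beta_{ijk}=\sum_i\lambda_i^\alpha h^\beta_{iik}$, and symmetrically for the other side. So the real content is the middle equality $\sum_i\lambda_i^\alpha h^\beta_{iik}=\sum_i\lambda_i^\beta h^\alpha_{iik}$.

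To prove the middle equality, I will use flatness of the normal bundle in its differentiated form. The condition $R_{\alpha\beta kl}\equiv0$ means $[A^\alpha,A^\beta]=0$ identically on a neighborhood, for every pair of normal indices and in any local normal frame. Differentiate the identity $\sum_p(h^\alpha_{ip}h^\beta_{pj}-h^\beta_{ip}h^\alpha_{pj})=0$ covariantly in direction $e_k$ and evaluate at $q$. Using diagonality, this gives
\[
(\lambda_j^\beta-\lambda_i^\beta)h^\alpha_{ijk}=(\lambda_j^\alpha-\lambda_i^\alpha)h^\beta_{ijk}\qquad\text{for all } i,j,k.
\]
Rather than using this relation directly, I plan an alternative route: differentiate the orthogonality relation.

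Here I have to handle a subtlety, which I expect to be the main obstacle. The orthogonality $\langle A^\alpha,A^\beta\rangle=0$ for $\alpha\ne\beta$ holds only at $q$, so one cannot naively differentiate it. The remedy is to use that the normal connection is flat. Choose the normal frame $\{e_\alpha\}$ parallel near $q$; flatness makes this possible on a simply connected neighborhood, and we can rotate it so that it diagonalizes $\mathcal A$ at $q$. In a parallel normal frame, covariant derivatives of the components $\langle A^\alpha,A^\beta\rangle$ are ordinary derivatives, and we have $e_k\langle A^\alpha,A^\beta\rangle=\langle\nabla_kA^\alpha,A^\beta\rangle+\langle A^\alpha,\nabla_kA^\beta\rangle$. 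This only yields symmetry of the sum, not the antisymmetric part we need, so instead I will extract the antisymmetric part from the commutator identity.

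Set $j=i$... no; the diagonal terms vanish. The right step is to use the Codazzi symmetry $h^\alpha_{ijk}=h^\alpha_{ikj}$ together with the commutator identity, reading indices so as to move $k$ onto the diagonal. Concretely, I compute
\[
\sum_i\lambda_i^\alpha h^\beta_{iik}-\sum_i\lambda_i^\beta h^\alpha_{iik}
=\sum_i\bigl(\lambda_i^\alpha h^\beta_{kii}-\lambda_i^\beta h^\alpha_{kii}\bigr).
\]
Add zero in the form $\lambda_k^\alpha h^\beta_{kii}$ and $\lambda_k^\beta h^\alpha_{kii}$, whose sums over $i$ vanish by minimality and Codazzi: $\sum_ih^\beta_{kii}=\sum_i h^\beta_{iik}=0$. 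Each summand then becomes $(\lambda_i^\alpha-\lambda_k^\alpha)h^\beta_{kii}-(\lambda_i^\beta-\lambda_k^\beta)h^\alpha_{kii}$. This vanishes by the differentiated commutator relation with the index pair $(k,i)$ and derivative index $i$.

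The only delicate point is the justification that the differentiated commutator identity holds at $q$. This requires the covariant derivative of $R_{\alpha\beta kl}\equiv0$, which is valid because $R^\perp$ vanishes identically, so its covariant derivative, expanded by the product rule, is exactly the displayed relation. No orthogonality of the $A^\alpha$ off $q$ is needed.
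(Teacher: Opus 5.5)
Your argument is correct and is essentially the paper's proof: covariantly differentiate the flatness identity $R_{\alpha\beta ki}=0$ at $q$ to get $(\lambda_i^\beta-\lambda_k^\beta)h^\alpha_{kil}=(\lambda_i^\alpha-\lambda_k^\alpha)h^\beta_{kil}$, set $l=i$, sum over $i$, and use minimality together with Codazzi symmetry to remove the $\lambda_k$ terms. The detour through parallel normal frames and differentiating the orthogonality relation is not needed and can be dropped.
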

\begin{proof}
	Since the normal bundle of $M$ is flat,
	\[R_{\alpha\beta ki}=\sum_{j}(h_{kj}^{\alpha}h_{ji}^{\beta}-h_{kj}^{\beta}h_{ji}^{\alpha})=0.\]
	Covariantly differentiating this tensorial identity at the chosen point, for every $l$, by \eqref{eq:diag}, we obtain
	\begin{align*}
		0=R_{\alpha\beta kil}=&\sum_{j}(h_{kjl}^{\alpha}h_{ji}^{\beta}+h_{kj}^{\alpha}h_{jil}^{\beta}-h_{kjl}^{\beta}h_{ji}^{\alpha}-h_{kj}^{\beta}h_{jil}^{\alpha})\\
		=&\lambda_{i}^{\beta}h_{kil}^{\alpha}+\lambda_{k}^{\alpha}h_{kil}^{\beta}-\lambda_{i}^{\alpha}h_{kil}^{\beta}-\lambda_{k}^{\beta}h_{kil}^{\alpha}\\
		=&(\lambda_{i}^{\beta}-\lambda_{k}^{\beta})h_{kil}^{\alpha}+(\lambda_{k}^{\alpha}-\lambda_{i}^{\alpha})h_{kil}^{\beta}.
	\end{align*}
	Hence, 
	\begin{equation}\label{letandsum}
		(\lambda_{i}^{\beta}-\lambda_{k}^{\beta})h_{kil}^{\alpha}=(\lambda_{i}^{\alpha}-\lambda_{k}^{\alpha})h_{kil}^{\beta}.
	\end{equation}
	Letting $l=i$ in \eqref{letandsum} and summing over $i$ yields
	\[\sum_{i}(\lambda_{i}^{\beta}-\lambda_{k}^{\beta})h_{iik}^{\alpha}=\sum_{i}(\lambda_{i}^{\alpha}-\lambda_{k}^{\alpha})h_{iik}^{\beta}.\]
	Since the minimality of $M$ implies $\sum\limits_{i}h_{iik}^{\alpha}=0$ for any $k$ and $\alpha$,
	\eqref{flatness-plus} follows.
\end{proof}
For further estimates, we give the following lemma (cf.\ Lemma 2.7 in \cite{GL}).
\begin{lem}
 Using the notation above, for any $i$ and $\alpha$, we have
 \begin{equation}\label{use}
 	(\lambda_{i}^{\alpha})^2\leqslant\frac{n-1}{n}\|A^{\alpha}\|^2.
 \end{equation}
\end{lem}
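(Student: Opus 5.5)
The bound is the standard fact that a single coordinate of a mean-zero vector cannot carry more than a $\frac{n-1}{n}$ share of its squared norm. Minimality gives $\sum_i h_{ii}^{\alpha}=0$ for each $\alpha$. In the frame \eqref{eq:diag} we have $h_{ij}^\alpha=\lambda_i^\alpha\delta_{ij}$, so this reads $\sum_i\lambda_i^\alpha=0$. Moreover $\|A^\alpha\|^2=\sum_{i,j}(h_{ij}^\alpha)^2=\sum_i(\lambda_i^\alpha)^2$.

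Fix $i$ and $\alpha$. From $\sum_j\lambda_j^\alpha=0$ we get $\lambda_i^\alpha=-\sum_{j\ne i}\lambda_j^\alpha$. By Cauchy--Schwarz over the $n-1$ remaining indices,
\[
(\lambda_i^\alpha)^2=\Bigl(\sum_{j\neq i}\lambda_j^\alpha\Bigr)^2\leqslant (n-1)\sum_{j\ne i}(\lambda_j^\alpha)^2=(n-1)\bigl(\|A^\alpha\|^2-(\lambda_i^\alpha)^2\bigr).
\]
Rearranging gives $n(\lambda_i^\alpha)^2\leqslant (n-1)\|A^\alpha\|^2$, which is \eqref{use}.

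Alternatively, the projection argument from the proof of Lemma~\ref{inesharp} gives the same result. There $\lambda_i^\alpha=\langle\lambda^\alpha,v_i\rangle$ with $\|v_i\|^2=\frac{n-1}{n}$, so $(\lambda_i^\alpha)^2\leqslant\|v_i\|^2\,\|\lambda^\alpha\|^2=\frac{n-1}{n}\|A^\alpha\|^2$.

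Equality holds exactly when all $\lambda_j^\alpha$ with $j\neq i$ are equal. The only point that needs care is that the diagonal frame \eqref{eq:diag} exists, and this is guaranteed by flatness of the normal bundle. Without a diagonal frame, the same argument still works for diagonal entries $h_{ii}^\alpha$, since the off-diagonal entries only increase $\|A^\alpha\|^2$.
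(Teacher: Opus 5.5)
Your proof is correct and is essentially the paper's argument. The paper expands $0=\bigl(\sum_j\lambda_j^\alpha\bigr)^2$ and bounds the cross terms $\sum_{j,k\neq i}\lambda_j^\alpha\lambda_k^\alpha$ using $2ab\leqslant a^2+b^2$, which is exactly your Cauchy--Schwarz step $\bigl(\sum_{j\neq i}\lambda_j^\alpha\bigr)^2\leqslant(n-1)\sum_{j\neq i}(\lambda_j^\alpha)^2$ written out by hand.
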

\begin{proof}
Since $M$ is minimal, for any fixed $i$ and $\alpha$, we have 
\begin{equation*}
	\begin{aligned}
	0=\sum_{j,k}\lambda_{j}^{\alpha}\lambda_{k}^{\alpha}=&(\lambda_{i}^{\alpha})^2+2\lambda_{i}^{\alpha}\sum_{k\neq i}\lambda_{k}^{\alpha}+\sum_{j,k\neq i}\lambda_{j}^{\alpha}\lambda_{k}^{\alpha}\\
	\leqslant&-(\lambda_{i}^{\alpha})^2+\sum_{j,k\neq i}\frac{(\lambda_{j}^{\alpha})^2+(\lambda_{k}^{\alpha})^2}{2}\\
	=&(n-1)\|A^{\alpha}\|^2-n(\lambda_{i}^{\alpha})^2.
	\end{aligned}
\end{equation*}
Hence, \eqref{use} holds.
\end{proof}
In 1983, Peng--Terng \cite{Peng-T2} gave a pointwise estimate for $A-2B$ as follows.
\begin{lem}[\cite{Peng-T2}]\label{1983}
	Let $M$ be an $n$-dimensional closed minimal hypersurface in $\mathbb{S}^{n+1}$. Then
	\begin{equation*}
		3(A-2B)\leqslant \frac{\sqrt{17}+1}{2}S|\nabla h|^2,
	\end{equation*}
where $A=\sum\limits_{i,j,k}\lambda_{i}^2h_{ijk}^2$, $B=\sum\limits_{i,j,k}\lambda_{i}\lambda_{j}h_{ijk}^2$.
\end{lem}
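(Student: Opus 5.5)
The plan is to symmetrize $A-2B$ over permutations of the indices, which turns it into a single sum of $h_{ijk}^2$ against a cubic-free quadratic form in the principal curvatures. After that the estimate becomes a pointwise eigenvalue bound on that form.

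\textbf{Step 1 (set-up and symmetrization).} At a fixed point choose an orthonormal frame with $h_{ij}=\lambda_i\delta_{ij}$. By the Codazzi equation $h_{ijk}$ is totally symmetric in $i,j,k$, so
\[
A-2B=\sum_{i,j,k}(\lambda_i^2-2\lambda_i\lambda_j)h_{ijk}^2 .
\]
Since $h_{ijk}^2$ is invariant under permutations of $(i,j,k)$, I average the coefficient over the cyclic permutations and obtain
\[
3(A-2B)=\sum_{i,j,k}h_{ijk}^2\, f(\lambda_i,\lambda_j,\lambda_k),\qquad
f(x,y,z)=x^2+y^2+z^2-2(xy+yz+zx).
\]
It therefore suffices to show $f(\lambda_i,\lambda_j,\lambda_k)\leqslant \frac{\sqrt{17}+1}{2}S$ for every triple $(i,j,k)$. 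Summing this against $h_{ijk}^2\geqslant0$ then gives $\frac{\sqrt{17}+1}{2}S|\nabla h|^2$.

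\textbf{Step 2 (case analysis on coincidences of indices).} There are three cases.
\begin{itemize}
\item If $i=j=k$, then $f=3\lambda_i^2-6\lambda_i^2=-3\lambda_i^2\leqslant0$.
\item If exactly two indices coincide, say $i=j\neq k$, then $f=\lambda_k^2-4\lambda_i\lambda_k$. This is the quadratic form with matrix $\begin{pmatrix}0&-2\\-2&1\end{pmatrix}$ in $(\lambda_i,\lambda_k)$. Its largest eigenvalue is $\frac{1+\sqrt{17}}{2}$, so $f\leqslant\frac{1+\sqrt{17}}{2}(\lambda_i^2+\lambda_k^2)\leqslant\frac{1+\sqrt{17}}{2}S$.
\item If $i,j,k$ are distinct, then $f=2(\lambda_i^2+\lambda_j^2+\lambda_k^2)-(\lambda_i+\lambda_j+\lambda_k)^2\leqslant 2S$, and $2<\frac{1+\sqrt{17}}{2}$.
\end{itemize}

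\textbf{Step 3 (conclusion and where care is needed).} Combining the three cases gives the pointwise inequality, and the lemma follows.

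The only step that needs attention is the two-equal-indices case, since it is the one that fixes the sharp constant $\frac{\sqrt{17}+1}{2}$. I expect no real obstacle there, because it reduces to computing the top eigenvalue of a $2\times2$ matrix. I should also check that the symmetrization in Step 1 is legitimate, which holds because $h_{ijk}$ is fully symmetric by Codazzi in a space form. Minimality is not even needed beyond this frame set-up.
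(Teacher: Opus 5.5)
Your proof is correct and is essentially the argument the paper uses for its generalization \eqref{key}. That argument symmetrizes to $\sum h_{ijk}^2 f(\lambda_i,\lambda_j,\lambda_k)$ and treats the three index-coincidence patterns, with the case of two equal indices governed by the form $\lambda_k^2-4\lambda_i\lambda_k$. The only difference is that the paper also uses minimality, via $(n-2)\sum_{l\neq i,j}\lambda_l^2\geqslant(\lambda_i+\lambda_j)^2$, to sharpen your plain eigenvalue $\frac{1+\sqrt{17}}{2}$ to the slightly smaller generalized eigenvalue $\Gamma_n$; your cruder bound already gives exactly the stated constant.
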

Set
\begin{equation}\label{eq:Gamma-n}
	\Gamma_n:=\frac{n+3+\sqrt{17n^2-26n+9}}{2n}.
\end{equation}
Using the same method as in the proof of Lemma~\ref{1983}, we obtain the following result.
\begin{lem}
	Let $M^{n}$ $(n\geqslant3)$ be a closed minimal submanifold in $\mathbb{S}^{n+m}$. Then for any fixed $\alpha$ and $\beta$,
	\begin{equation}\label{key}
			3\left(A_{\alpha,\beta}-2B_{\alpha,\beta}\right)\leqslant \Gamma_n\|A^{\alpha}\|^2|\nabla h^{\beta}|^2,
	\end{equation}
where $A_{\alpha,\beta}$ and $B_{\alpha,\beta}$ are defined in \eqref{P-T type}.
\end{lem}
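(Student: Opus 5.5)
The plan is to fix $\alpha,\beta$ and work at an arbitrary point $q$, following Peng--Terng's pointwise argument for Lemma~\ref{1983}, but keeping track of the minimality constraint through the vectors $v_i$ from the proof of Lemma~\ref{inesharp}. Since only the single shape operator $A^\alpha$ has to be diagonalized, I will not need \eqref{eq:diag} for all normal directions; it suffices to rotate the tangent frame so that $h_{ij}^\alpha=\lambda_i\delta_{ij}$ at $q$.

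\textbf{Step 1: reduce to a sum over index triples.} Write $h_{ijk}:=h^\beta_{ijk}$, which is totally symmetric by the Codazzi equation. In this frame,
\[A_{\alpha,\beta}=\sum_{i,j,k}h_{ijk}^2\lambda_k^2,\qquad B_{\alpha,\beta}=\sum_{i,j,k}h_{ijk}^2\lambda_i\lambda_j.\]
Symmetrizing over the three indices then gives
\[3(A_{\alpha,\beta}-2B_{\alpha,\beta})=\sum_{i,j,k}h_{ijk}^2\,f(i,j,k),\]
\[f(i,j,k)=\lambda_i^2+\lambda_j^2+\lambda_k^2-2(\lambda_i\lambda_j+\lambda_j\lambda_k+\lambda_k\lambda_i).\]
Since $\sum_{i,j,k}h_{ijk}^2=|\nabla h^\beta|^2$, it suffices to prove the pointwise bound $f(i,j,k)\leqslant\Gamma_n\|A^\alpha\|^2$ for every triple. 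The bound must use only $\sum_i\lambda_i=0$ and $\sum_i\lambda_i^2=\|A^\alpha\|^2$.

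\textbf{Step 2: the easy index configurations.}
\begin{itemize}
\item If $i=j=k$, then $f=-3\lambda_i^2\leqslant0$.
\item If $i,j,k$ are distinct, then
\[f=2(\lambda_i^2+\lambda_j^2+\lambda_k^2)-(\lambda_i+\lambda_j+\lambda_k)^2\leqslant2(\lambda_i^2+\lambda_j^2+\lambda_k^2)\leqslant2\|A^\alpha\|^2.\]
\end{itemize}
A direct check shows $2\leqslant\Gamma_n$ for $n\geqslant3$: this is equivalent to $\sqrt{17n^2-26n+9}\geqslant3n-3$, i.e.\ $8n^2-8n\geqslant0$. The three-distinct case could be sharpened using the Gram matrix of $v_i,v_j,v_k$, but that is not needed.

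\textbf{Step 3: the main case $i=j\neq k$.} Here $f=\lambda_k^2-4\lambda_i\lambda_k$; this is where $\Gamma_n$ comes from and where I expect the real work. As in Lemma~\ref{inesharp}, write $\lambda_i=\langle\lambda,v_i\rangle$ with $\lambda\in H$ and $v_i=e_i-\frac1n(1,\dots,1)$. Then
\[\max_{\lambda\in H,\ |\lambda|^2=\|A^\alpha\|^2}\bigl(\lambda_k^2-4\lambda_i\lambda_k\bigr)\]
equals $\|A^\alpha\|^2$ times the largest eigenvalue of the quadratic form $\langle\lambda,v_k\rangle^2-4\langle\lambda,v_i\rangle\langle\lambda,v_k\rangle$. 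Only the projection of $\lambda$ onto $\mathrm{span}\{v_i,v_k\}$ matters, so this eigenvalue is the top eigenvalue of $MG$, with
\[M=\begin{pmatrix}0&-2\\-2&1\end{pmatrix},\qquad G=\begin{pmatrix}\frac{n-1}{n}&-\frac1n\\-\frac1n&\frac{n-1}{n}\end{pmatrix},\]
where $G$ is the Gram matrix of $v_i,v_k$. We have $\det(MG)=\det M\det G=-4\cdot\frac{n-2}{n}$ and $\operatorname{tr}(MG)=\frac{4}{n}+\frac{n-1}{n}=\frac{n+3}{n}$. The top eigenvalue is therefore
\[\frac{\frac{n+3}{n}+\sqrt{\frac{(n+3)^2}{n^2}+\frac{16(n-2)}{n}}}{2}=\frac{n+3+\sqrt{17n^2-26n+9}}{2n}=\Gamma_n.\]
As a consistency check, for large $n$ this tends to Peng--Terng's constant $\frac{1+\sqrt{17}}{2}$.

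\textbf{Conclusion.} Combining the three cases gives $f(i,j,k)\leqslant\Gamma_n\|A^\alpha\|^2$ for every triple, and summing against $h_{ijk}^2$ yields \eqref{key}. The only delicate points are getting the symmetrization in Step 1 right and doing the constrained eigenvalue computation in Step 3 correctly.
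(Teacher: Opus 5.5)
Your proof is correct and follows the paper's argument. The shared steps are the symmetrized sum $\sum_{i,j,k}(h^\beta_{ijk})^2 f(i,j,k)$, the same case split with the bound $2\|A^\alpha\|^2$ for three distinct indices, and a $2\times2$ eigenvalue problem for the two-equal-indices case. The only difference is cosmetic: the paper imposes $\sum_l\lambda_l=0$ through $\sum_l\lambda_l^2\geqslant\lambda_i^2+\lambda_j^2+\frac{1}{n-2}(\lambda_i+\lambda_j)^2$ and solves $\det(P-\gamma Q)=0$. Since $Q=G^{-1}$ for your Gram matrix $G$, that is exactly your eigenvalue computation for $MG$.
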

\begin{proof}
	For any fixed $\alpha$ and $\beta$, we have a pointwise estimate
	\begin{equation}\label{peng}
	\begin{aligned}
		3(A_{\alpha,\beta}-2B_{\alpha,\beta})
		=&\sum_{i,j,k} (h_{ijk}^{\beta})^2 [(\lambda_i^{\alpha})^2+(\lambda_j^{\alpha})^2+(\lambda_k^{\alpha})^2 - 2\lambda_i^{\alpha}\lambda_j^{\alpha} - 2\lambda_j^{\alpha}\lambda_k^{\alpha} - 2\lambda_i^{\alpha}\lambda_k^{\alpha}] \\
		=&-\sum_i 3(h_{iii}^{\beta})^2(\lambda_i^{\alpha})^2 + \sum_{i\neq j} 3(h_{iij}^{\beta})^2 \big((\lambda_j^{\alpha})^2-4\lambda_j^{\alpha}\lambda_i^{\alpha}\big) \\
		&+\sum_{i, j, k \;\mathrm{distinct}} (h_{ijk}^{\beta})^2 \big(2(\lambda_i^{\alpha})^2+2(\lambda_j^{\alpha})^2+2(\lambda_k^{\alpha})^2-(\lambda_{i}^{\alpha}+\lambda_{j}^{\alpha}+\lambda_{k}^{\alpha})^2\big) \\
		\leqslant&2\|A^{\alpha}\|^2\sum_{i, j, k \;\mathrm{distinct}} (h_{ijk}^{\beta})^2+\sum_{i\neq j} 3(h_{iij}^{\beta})^2 \big((\lambda_j^{\alpha})^2-4\lambda_j^{\alpha}\lambda_i^{\alpha}\big),
	\end{aligned}
\end{equation}
Because $\Gamma_n\geqslant2$, it remains to prove that for $i\neq j$,
\[(\lambda_j^{\alpha})^2-4\lambda_j^{\alpha}\lambda_i^{\alpha}\leqslant \Gamma_n\|A^{\alpha}\|^2.\]
For $i\neq j$, since $\sum\limits_l\lambda_l^{\alpha}=0$, the Cauchy inequality gives
\[
(n-2)\sum_{l\ne i,j}(\lambda_l^{\alpha})^2
\geqslant \big(\sum_{l\ne i,j}\lambda_l^{\alpha}\big)^2=(\lambda_i^{\alpha}+\lambda_j^{\alpha})^2.
\]
Therefore
\[
\sum_l(\lambda_l^{\alpha})^2
\geqslant (\lambda_i^{\alpha})^2+(\lambda_j^{\alpha})^2+\frac{1}{n-2}(\lambda_i^{\alpha}+\lambda_j^{\alpha})^2.
\]
We now maximize the quadratic form
\[
(\lambda_j^{\alpha})^2-4\lambda_j^{\alpha}\lambda_i^{\alpha}
\]
under the normalization
\[
(\lambda_i^{\alpha})^2+(\lambda_j^{\alpha})^2+\frac{1}{n-2}(\lambda_i^{\alpha}+\lambda_j^{\alpha})^2=1.
\]
Equivalently, this is the largest generalized eigenvalue of the pair
\[
P=\begin{pmatrix}0&-2\\-2&1\end{pmatrix},\qquad
Q=\begin{pmatrix}\frac{n-1}{n-2}&\frac1{n-2}\\[1mm]
	\frac1{n-2}&\frac{n-1}{n-2}\end{pmatrix}.
\]
The characteristic equation is
\[
\det(P-\gamma Q)=0,
\]
that is,
\[
n\gamma^2-(n+3)\gamma-4(n-2)=0.
\]
The positive root is precisely
\[
\gamma=\Gamma_n=\frac{n+3+\sqrt{17n^2-26n+9}}{2n}.
\]
Hence
\begin{equation}\label{polyGamma}
(\lambda_j^{\alpha})^2-4\lambda_j^{\alpha}\lambda_i^{\alpha}
\leqslant \Gamma_n\sum_l(\lambda_l^{\alpha})^2
=\Gamma_n\|A^{\alpha}\|^2.
\end{equation}
Combining \eqref{peng} with \eqref{polyGamma} gives \eqref{key}.
\end{proof}
\begin{rem}
	It follows immediately that \[\Gamma_n\leqslant \frac{\sqrt{17}+1}{2}, \quad\lim_{n \to \infty}\Gamma_n=\frac{\sqrt{17}+1}{2}.\]
\end{rem}
The following algebraic inequalities proved by Lei--Xu--Xu \cite{2017} will be useful in later estimates.
\begin{lem}[\cite{2017}]\label{lem:XLX-triple}
	For all $x,y,z\in\mathbb R$,
	\begin{align*}
		-2(xy+yz+zx+2)^3
		<&(x-y)^2(xy+1)^2+(x-z)^2(xz+1)^2+(z-y)^2(yz+1)^2.\nonumber
	\end{align*}
\end{lem}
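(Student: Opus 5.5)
We split into cases according to the sign of $\sigma:=xy+yz+zx+2$. Write $R(x,y,z)$ for the right-hand side; it is a sum of squares, so $R\geqslant0$.

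\emph{Case $\sigma>0$.} Here the left-hand side $-2\sigma^3$ is strictly negative while $R\geqslant0$, so the inequality is immediate.

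\emph{Case $\sigma=0$.} The left-hand side vanishes, so we must show $R>0$, i.e.\ that $R=0$ is impossible. If $R=0$, then for each pair either the two entries coincide or their product is $-1$.
\begin{itemize}
\item If $x=y=z$, then $\sigma=3x^2+2>0$, a contradiction.
\item If all three products equal $-1$, then $(xyz)^2=(xy)(yz)(zx)=-1$, which is impossible.
\item If, say, $x=y$ and $xz=yz=-1$, then $\sigma=x^2-2+2=x^2$. So $\sigma=0$ forces $x=0$, contradicting $xz=-1$.
\item If exactly one product equals $-1$ and the other two pairs coincide, all three entries are equal, which is already excluded.
\end{itemize}
Hence $R>0$ in this case.

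\emph{Case $\sigma<0$.} This is the genuine content: we must show $R>2|\sigma|^3$ whenever $xy+yz+zx<-2$. Two approaches are available.

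\emph{Trigonometric substitution.} Put $x=\tan a$, $y=\tan b$, $z=\tan c$ with $a,b,c\in(-\frac\pi2,\frac\pi2)$. Then
\[
(x-y)(1+xy)=\frac{\sin 2(a-b)}{2\cos^2a\cos^2b},
\]
and similarly for the other pairs. Multiplying through by $(\cos a\cos b\cos c)^6$ turns both sides into trigonometric polynomials. The term with $a-b$ becomes $\frac14\sin^2 2(a-b)\cos^4 c$ (with cyclic analogues), and $\sigma\cos a\cos b\cos c$ becomes
\[
\sin a\sin b\cos c+\sin b\sin c\cos a+\sin c\sin a\cos b+2\cos a\cos b\cos c.
\]
This reduces the claim to an inequality on a compact torus, which can be checked by locating critical points.

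\emph{Symmetric-function approach (the route I would try first).} Since $R$ is symmetric, write it as a polynomial in $e_1=x+y+z$, $e_2=xy+yz+zx$ and $e_3=xyz$. Fix $e_2=s<-2$, so that the left-hand side $2(-s-2)^3$ is a constant. We then minimize $R$ over those $(e_1,e_3)$ for which the cubic $t^3-e_1t^2+st-e_3$ has three real roots, i.e.\ whose discriminant is nonnegative. Since $R$ has degree $6$ in $(x,y,z)$ and is a polynomial in $e_1$ and $e_3$ of low degree, the minimum occurs either at an interior critical point or on the boundary where two roots coincide. On that boundary we may set $y=z$, which kills one term of $R$ and reduces the problem to a two-variable inequality. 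That inequality can be handled by elementary one-variable calculus after scaling.

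\emph{Main obstacle.} I expect the hard step to be the interior critical-point analysis in the case $\sigma<0$: showing that no interior critical point of $R$ (with $e_2$ fixed) gives a value $\leqslant 2|s+2|^3$. The cubic growth of the left-hand side in $|s|$ must be matched against the sextic terms of $R$. Proving strictness will require care near the threshold $s=-2$, where both sides tend to zero. There I would compare leading-order behaviour, using the $\sigma=0$ analysis above to show that $R$ stays bounded away from zero.
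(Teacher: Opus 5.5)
\textbf{There is a genuine gap: the case $xy+yz+zx<-2$ is not proved.} Your treatment of $\sigma>0$ and $\sigma=0$ is correct. But the case $\sigma<0$ is the entire content of the lemma. The paper gives no proof of its own and simply quotes it from Lei--Xu--Xu, so your argument has to stand alone. In that case you only list strategies, and the step you flag as the main obstacle is never carried out.

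Two of the heuristics you lean on are also wrong.
\begin{itemize}
\item $R$ is not bounded away from zero near the threshold. For $y=z=w\to0^-$ and $x=-(2+w^2)/(2w)$ one has $\sigma=0$ and $R=\tfrac18 w^2(2+3w^2)^2\to0$. So both sides can tend to $0$ along unbounded sequences, and the pointwise positivity from your $\sigma=0$ case gives no uniform control.
\item For fixed $e_2$ the admissible $(e_1,e_3)$-region is unbounded. So ``interior critical point or boundary'' needs an extra coercivity argument.
\item The trigonometric compactification has the same defect. After clearing denominators, both sides vanish at torus points such as $a=\pi/2$, $b=c=0$, which is the limit of the sequence above. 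Also, with the factor $(\cos a\cos b\cos c)^6$ the $a-b$ term is $\tfrac14\sin^22(a-b)\cos^2a\cos^2b\cos^6c$. Your $\cos^4c$ corresponds to the fourth power.
\end{itemize}

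\textbf{How to close the gap along your preferred route: fix $e_1$ as well as $e_2$.} A direct expansion gives
\[
R=-9e_3^2+2e_1(3+5e_2-e_1^2)\,e_3+e_1^2e_2^2-4e_2^3+2e_1^2e_2-8e_2^2+2e_1^2-6e_2 ,
\]
so for fixed $(e_1,e_2)$, $R$ is a concave quadratic in $e_3$. The values of $e_3$ for which $t^3-e_1t^2+e_2t-e_3$ has three real roots form a compact interval whose endpoints are double-root configurations. Along this interval $-2\sigma^3=-2(e_2+2)^3$ is constant. Hence $R$ is minimized at an endpoint, and there are no interior critical points to analyze.

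It therefore suffices to treat $y=z$, where the claim reads $-\sigma^3<(x-y)^2(1+xy)^2$. If $\sigma=2xy+y^2+2<0$ then $xy<0$. After the sign flip $(x,y,z)\mapsto(-x,-y,-z)$ you may take $y=u>0$ and $x=-t<0$. Put $a=tu-1$ and $d=-\sigma=2a-u^2$, so $0<d<2a$. Using $(t+u)^2=(tu+u^2)^2/u^2$, the claim becomes $(2a-d)d^3<(1+3a-d)^2a^2$. This follows from $1+3a-d>3a-d>0$ together with
\[
(3-\lambda)^2-(2-\lambda)\lambda^3=(\lambda^2-\lambda-1)^2+2(\lambda-2)^2>0,\qquad \lambda=d/a .
\]
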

\begin{lem}[\cite{2017}]\label{lem:XLX-pair}
	Let $s\geqslant6$ and $x^2+y^2\leqslant s$.  Then
	\begin{equation*}
		-(x^2+4xy+4)^3<\frac{16}{5}\left(3-\frac{10}{s}\right)(x-y)^2(1+xy)^2.
	\end{equation*}
\end{lem}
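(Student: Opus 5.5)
We handle first the region where the left-hand side is not positive, and then reduce the remaining region to a two-parameter polynomial inequality.

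\textbf{Step 1: the trivial region.} Put $t:=-(x^2+4xy+4)$. If $t\leqslant 0$, the left-hand side is $\leqslant 0$ and the right-hand side is $\geqslant 0$, because $s\geqslant6$ gives $3-\frac{10}{s}>0$. So we only need to exclude simultaneous equality. Suppose $t=0$ and the right-hand side vanishes, so $x=y$ or $xy=-1$.
\begin{itemize}
\item If $x=y$, then $t=0$ reads $5x^2+4=0$, which is impossible.
\item If $xy=-1$, then $t=0$ reads $x^2-4+4=0$, so $x=0$, contradicting $xy=-1$.
\end{itemize}
Hence strict inequality holds whenever $t\leqslant0$. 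From now on assume $t>0$. This forces $4xy<-(x^2+4)$, so $p:=xy<-1$; in particular $1+xy\neq0$ and $x\neq y$.

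\textbf{Step 2: reduction to $(r,p)$.} Put $r:=x^2+y^2\leqslant s$. The factor $3-\frac{10}{s}$ increases with $s$, so it suffices to prove the inequality with $s$ replaced by $s_0:=\max(6,r)$. In terms of $r$ and $p$,
\[
(x-y)^2(1+xy)^2=(r-2p)(1+p)^2 .
\]
For fixed $(r,p)$, the quantity $t=-x^2-4p-4$ is largest when $x^2$ is the smaller root of $z^2-rz+p^2=0$, namely
\[
x^2=\frac{r-\sqrt{r^2-4p^2}}{2}.
\]
It therefore suffices to prove, for $-\frac r2\leqslant p<-1$,
\[
F(r,p):=\Bigl(\frac{\sqrt{r^2-4p^2}-r}{2}-4p-4\Bigr)_+^3<\frac{16}{5}\Bigl(3-\frac{10}{s_0}\Bigr)(r-2p)(1+p)^2 .
\]
Substituting $p=-\frac r2\cos\theta$ and $\sqrt{r^2-4p^2}=r\sin\theta$ rationalizes the square root. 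This leaves a one-parameter family of inequalities in $\theta$, with $r$ as a parameter, split into the cases $r\leqslant6$ and $r\geqslant6$.

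\textbf{Step 3: the polynomial estimate (main obstacle).} Here one must check a genuinely cubic-versus-cubic polynomial inequality. A first asymptotic check is encouraging. For large $r$ with $p\approx-\frac r2$, the left side is about $(1.5r)^3=3.375r^3$, while the right side is about $\frac{48}{5}\cdot 2r\cdot\frac{r^2}{4}=4.8r^3$. The ends of the $\theta$-range (near $t=0$) are also fine: there the left side tends to $0$ while the right side stays positive.

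The delicate part is the intermediate range, especially near $r=s=6$, where $3-\frac{10}{s}=\frac43$ is smallest. I would proceed as follows.
\begin{itemize}
\item Homogenize by writing $u:=-p-1>0$, so that $1+p=-u$ and $t=\frac{r\sin\theta-r}{2}+4u$.
\item Bound the left-hand cube using $t\leqslant 4u+\frac{r(\sin\theta-1)}{2}$, and apply AM--GM against $(r-2p)u^2$.
\item Where AM--GM does not suffice, reduce to checking the sign of a single-variable polynomial on an interval. Do this by differentiating in $\theta$, locating the critical point, and verifying the value there, with a separate elementary verification in the compact region $r\leqslant 6$.
\end{itemize}
If this direct route becomes unwieldy, I would instead follow Lei--Xu--Xu~\cite{2017} and optimize over $y$ with $x$ fixed and $y$ on the boundary $x^2+y^2=s$. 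The idea is that the ratio of the left side to the right side increases in $|y|$ in the relevant sign region, so this reduces the problem to a one-variable check in $x$ for each $s\geqslant6$, monotone in $s$.
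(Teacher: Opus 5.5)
\textbf{This is a genuine gap: Step~3, which carries the whole content of the lemma, is not carried out.} The paper gives no proof of this lemma; it quotes it from Lei--Xu--Xu, so there is no in-paper argument to compare against, and your proposal has to stand on its own.

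Steps~1 and~2 are correct. The case $t\leqslant0$ is handled properly. Replacing $s$ by $s_0=\max(6,r)$ is legitimate. For fixed $(r,p)$, taking $x^2$ to be the smaller root of $z^2-rz+p^2=0$ does maximize $t$. But these steps only restate the lemma as the two-parameter inequality
\[
\bigl(r\,g(c)-4\bigr)^3<\frac{16}{5}\Bigl(3-\frac{10}{s_0}\Bigr)\,r(1+c)\Bigl(\frac{rc}{2}-1\Bigr)^2,
\qquad g(c):=2c-\frac{1-\sqrt{1-c^2}}{2},
\]
for $c=\cos\theta\in(\frac2r,1]$ with $r\,g(c)>4$.

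Step~3 does not prove this inequality. It lists techniques you would try, and the fallback claim that the ratio of the two sides is monotone in $|y|$ is asserted without argument. The one explicit estimate, $t\leqslant 4u+\frac{r(\sin\theta-1)}{2}$, is just the identity defining $t$. The natural crude bound $t\leqslant 4u$, obtained by dropping the $\sin\theta$ term, already fails. At $\theta=0$, where $r-2p=4u+4$, it would require $20u<(3-\frac{10}{s_0})(4u+4)$, which is false for $u\geqslant\frac32$.

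You have also misjudged where the difficulty lies. Your check at $p\approx-\frac r2$ is the endpoint $\theta=0$. There $t=\frac32r-4$, not $t\approx0$, and this point is far from the extremal configuration. Now take $r=s_0=s\to\infty$:
\begin{itemize}
\item the $r^3$-coefficients of the two sides are $g(c)^3$ and $\frac{12}{5}(1+c)c^2$;
\item their ratio reaches about $0.9974$ near $c=\frac34$;
\item by contrast, at $r=s=6$ the ratio of the two sides is at most about $0.96$.
\end{itemize}
So the tight regime is large $s$, not $s$ near $6$, and there the inequality holds with only about a quarter of a percent to spare. Any estimate that is lossy by more than a fraction of a percent in this regime cannot succeed. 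This includes AM--GM bounds that discard part of the $\theta$-dependence of $t$.

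To close the gap you must either reproduce the argument of Lei--Xu--Xu or carry out the verification at this precision. That means an explicit maximization in $c$, with control of the lower-order terms in $r$, for every $r\geqslant6$. You also need a separate treatment of the compact range $r\leqslant6$.
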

Define
\begin{equation}\label{F}
	F_{1}:=\sum\limits_{i, j}(\lambda_{i}^{n+1}-\lambda_{j}^{n+1})^2(1+\lambda_{i}^{n+1}\lambda_{j}^{n+1})^2,\quad F:=\sum\limits_{i, j,\alpha}(\lambda_{i}^{\alpha}-\lambda_{j}^{\alpha})^2(1+\sum\limits_{\beta}\lambda_{i}^{\beta}\lambda_{j}^{\beta})^2.
\end{equation}
We obtain a new upper bound for $\sum\limits_{\alpha,\beta}(A_{\alpha,\beta}-2B_{\alpha,\beta})$ in the following lemma.
\begin{lem}\label{mixA-2B}
	Let $M^{n}$ $(n\geqslant3)$ be a closed minimal submanifold in $\mathbb{S}^{n+m}$ with flat normal bundle. Suppose that $S$ is constant and
$
		n\leqslant S\leqslant n+\delta\leqslant2n
$
	for some $\delta>0$. Then, for every $\rho>0$,
\begin{equation}\label{sumA-2B}
	\begin{aligned}
	&3\sum_{\alpha,\beta}(A_{\alpha,\beta}-2B_{\alpha,\beta})\\
	\leqslant&\left(S+4+(\Gamma_n-1)\epsilon+\sqrt[3]{c(1+\rho)F+c(1+\rho^{-1})\frac{n-2}{n}(S-\epsilon)\epsilon^2}\right)|\nabla h|^2,
\end{aligned}
\end{equation}
where
$
\epsilon:=\frac{n+\delta-\sqrt{n^2-\delta^2}}{2}.
$
Moreover, 
\[
c=\begin{cases}
	\frac{32}{15},&3\leqslant n\leqslant5,\\[1mm]
	\frac{24}{5}-\frac{16}{n+\delta},&n\geqslant6.
\end{cases}\]
\end{lem}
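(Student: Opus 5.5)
We work at a point $q$ in the frame \eqref{eq:diag}. Since $\langle A^\alpha,A^\beta\rangle=0$ for $\alpha\ne\beta$, rotate the normal frame so that $\|A^{n+1}\|^2=a_1$ and $\sum_{\alpha\geqslant n+2}\|A^\alpha\|^2=b\leqslant\epsilon$ by \eqref{eq:b-eps}. Split the double sum $\sum_{\alpha,\beta}$ into two parts. The first is the ``hypersurface-like'' part $\alpha=n+1$ with $\beta$ arbitrary. The second consists of the terms with $\alpha\geqslant n+2$. For the second part we apply \eqref{key} directly: $3\sum_{\alpha\geqslant n+2}(A_{\alpha,\beta}-2B_{\alpha,\beta})\leqslant\Gamma_n b|\nabla h^\beta|^2$. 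Summing over $\beta$ gives at most $\Gamma_n\epsilon|\nabla h|^2$. Only $(\Gamma_n-1)\epsilon$ appears in \eqref{sumA-2B}, so we need to recover one $\epsilon|\nabla h|^2$ from the first part, where $\|A^{n+1}\|^2=S-b$.

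For the first part, expand $3(A_{n+1,\beta}-2B_{n+1,\beta})$ as in \eqref{peng}, with $\lambda_i:=\lambda_i^{n+1}$ and the coefficient $\lambda_i^2+\lambda_j^2+\lambda_k^2-2(\lambda_i\lambda_j+\lambda_j\lambda_k+\lambda_k\lambda_i)$ attached to $(h^\beta_{ijk})^2$. Following Lei--Xu--Xu, we then write each coefficient as $S+4+(\text{remainder})$. For distinct $i,j,k$, the remainder is $-(S-b)-4+\dots$, which we compare with $-2(\lambda_i\lambda_j+\lambda_j\lambda_k+\lambda_k\lambda_i+2)$ using $\|A^{n+1}\|^2=S-b$. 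This produces the shift $-b$ that supplies the missing $-\epsilon$. For $i\ne j$ with two equal indices, we compare with $-(\lambda_j^2+4\lambda_i\lambda_j+4)$. The key device of \cite{2017} is the following. With $\mu_{ijk}:=-2(\lambda_i\lambda_j+\lambda_j\lambda_k+\lambda_k\lambda_i+2)$, we have $\sum(h_{ijk}^\beta)^2\mu_{ijk}\leqslant(\sum(h^\beta_{ijk})^2)^{2/3}(\sum(h^\beta_{ijk})^2\mu_{ijk}^3{}_+)^{1/3}$ by Hölder. The cubes are then controlled by Lemmas~\ref{lem:XLX-triple} and~\ref{lem:XLX-pair}, giving $(\lambda_i-\lambda_j)^2(1+\lambda_i\lambda_j)^2$ terms, with constant $c=\frac{32}{15}$ or $\frac{24}{5}-\frac{16}{s}$ where $s=n+\delta\geqslant\lambda_i^2+\lambda_j^2$. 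These terms are in turn bounded by $|h^\beta_{ijk}|^2$-weighted sums of $F_1$-type quantities. To get rid of the weights we use $(h^\beta_{ijk})^2\leqslant|\nabla h|^2$. Summing over $\beta$ then gives a cube root of $cF_1$-type terms times $|\nabla h|^2$.

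It remains to replace the single-direction quantity $F_1$ by the full $F$ from \eqref{F}. In the frame \eqref{eq:diag}, $F$ contains $(\lambda_i-\lambda_j)^2(1+\lambda_i\lambda_j+\sum_{\beta\geqslant n+2}\lambda^\beta_i\lambda^\beta_j)^2$. Writing $1+\lambda_i\lambda_j=u+v$ with $v=-\sum_{\beta\geqslant n+2}\lambda_i^\beta\lambda_j^\beta$, we use $(u+v)^2\leqslant(1+\rho)u^2+(1+\rho^{-1})v^2$ in the reverse direction, i.e.\ $F_1\leqslant(1+\rho)F_{\text{main}}+(1+\rho^{-1})\sum(\lambda_i-\lambda_j)^2v^2$. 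The error term is then bounded with Lemma~\ref{inesharp} \eqref{eq:opt-three}, applied to $x=\lambda^{n+1}$ and $y=\lambda^\beta$; these are orthogonal with zero sums by \eqref{eq:diag} and minimality. Then $\sum_{i,j}(\lambda_i-\lambda_j)^2(\lambda_i^\beta\lambda_j^\beta)^2$ reduces via $\sum_i x_i^2y_i^2\leqslant\frac{n-2}{2n}XY$, and combining over $\beta$ with $X=S-b\leqslant S-\epsilon$-type monotonicity and $\sum Y=b\leqslant\epsilon$ yields $\frac{n-2}{n}(S-\epsilon)\epsilon^2$. Lemma~\ref{fine} is used to handle the cross terms among $\beta$ in the $h^\beta$ expansion, which are symmetric in $\alpha,\beta$.

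\textbf{Main obstacle.} The delicate step is the Hölder/cube step. We must check that the remainder coefficients are exactly dominated, after cubing, by the Lei--Xu--Xu quantities with the stated $c$ in each dimension range. We must also ensure that the passage from $F_1$ to $F$ produces precisely the $(1+\rho)$, $(1+\rho^{-1})\frac{n-2}{n}(S-\epsilon)\epsilon^2$ structure without losing an extra factor. We would first verify this bookkeeping in codimension two ($b=a_2$) and then extend to general $m$.
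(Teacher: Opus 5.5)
Your outline has the same architecture as the paper's proof: split off $\alpha=n+1$, apply Lei--Xu--Xu to that part, use \eqref{key} for $\alpha\geqslant n+2$, split $1+\lambda_i^{n+1}\lambda_j^{n+1}=K_{ij}-\sum_{\gamma\geqslant n+2}\lambda_i^\gamma\lambda_j^\gamma$ with weights $1+\rho$, $1+\rho^{-1}$, and bound the error with \eqref{eq:opt-three}. However, the step you flag as the main obstacle is left open, and the route you sketch for it does not give the stated $c$. Suppose you remove the weights via $(h^\beta_{ijk})^2\leqslant|\nabla h|^2$, as you describe, and then sum the Lei--Xu--Xu right-hand sides over all ordered triples. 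Then each ordered pair $(i,j)$ is counted $3(n-2)$ times, and the cube root picks up a dimension-dependent factor. What is missing is a purely pointwise observation. For one fixed triple, the right-hand side of Lemma~\ref{lem:XLX-triple} is at most $\tfrac12F_1$, because $F_1$ runs over ordered pairs and so counts each unordered pair twice. Hence, with $\lambda_i=\lambda_i^{n+1}$, you get $-2(\lambda_i\lambda_j+\lambda_j\lambda_k+\lambda_k\lambda_i+2)<(2F_1)^{1/3}$, and Lemma~\ref{lem:XLX-pair} gives $-(\lambda_i^2+4\lambda_i\lambda_j+4)<\big((\tfrac{24}{5}-\tfrac{16}{s})F_1\big)^{1/3}$. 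This halving is exactly what produces $c$. For $n\leqslant5$ you take $s=6$, which needs $n+\delta\leqslant6$ so that $\lambda_i^2+\lambda_j^2\leqslant a_1\leqslant 6$. Thus every coefficient in \eqref{peng} with $\alpha=n+1$ is at most $a_1+4+\sqrt[3]{cF_1}$; the $i=j=k$ coefficients are $\leqslant0$. Summing against $(h^\beta_{ijk})^2$ with multiplicities $1$ and $3$ gives $3(A_{n+1,\beta}-2B_{n+1,\beta})\leqslant(a_1+4+\sqrt[3]{cF_1})|\nabla h^\beta|^2$ directly, with no H\"older step. Also, the coefficient of $(h^\beta_{iij})^2$ is $\lambda_j^2-4\lambda_i\lambda_j=(\lambda_i^2+\lambda_j^2+4)-(\lambda_i^2+4\lambda_i\lambda_j+4)$. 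So Lemma~\ref{lem:XLX-pair} must be applied with $x=\lambda_i$, the repeated index. Your $-(\lambda_j^2+4\lambda_i\lambda_j+4)$ leaves $2\lambda_j^2+4$, which is not bounded by $a_1+4$.

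Two bookkeeping steps are also wrong as written. First, you cannot replace $\Gamma_nb$ by $\Gamma_n\epsilon$ and then recover $-\epsilon$ from $a_1=S-b$, since $b\leqslant\epsilon$ gives $-b\geqslant-\epsilon$. Keep $b$ and combine first: $a_1+\Gamma_nb=S+(\Gamma_n-1)b\leqslant S+(\Gamma_n-1)\epsilon$, using $\Gamma_n\geqslant1$. Second, the error term is $(\sum_{\gamma\geqslant n+2}\lambda_i^\gamma\lambda_j^\gamma)^2$, which for $m\geqslant4$ is not $\sum_\gamma(\lambda_i^\gamma\lambda_j^\gamma)^2$. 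Apply Cauchy--Schwarz in the normal index to bound it by $w_iw_j$, where $w_i:=\sum_{\gamma\geqslant n+2}(\lambda_i^\gamma)^2$ and $\sum_iw_i=b$. Then use $\sum_{i,j}(x_i-x_j)^2w_iw_j=2b\sum_ix_i^2w_i-2(\sum_ix_iw_i)^2$ with $x=\lambda^{n+1}$, and \eqref{eq:opt-three} for each $\gamma$, to get at most $\frac{n-2}{n}a_1b^2=\frac{n-2}{n}(S-b)b^2$. Finally, $(S-b)b^2\leqslant(S-\epsilon)\epsilon^2$ follows from monotonicity of $t\mapsto t^2(S-t)$ on $[0,\epsilon]$; your ``$X=S-b\leqslant S-\epsilon$'' points the wrong way. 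Lemma~\ref{fine} is not needed anywhere, since $A_{\alpha,\beta}$ and $B_{\alpha,\beta}$ contain no cross terms between distinct normal directions.
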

\begin{proof}
	When $n\geqslant 6$, since $S$ is constant, combining
	\[0\leqslant|\nabla h|^2=-nS+|\mathcal{A}|^2\] and
	\[|\mathcal{A}|^2=\sum_{r=1}^{m}a_{r}^2\leqslant a_{1}\sum_{r=1}^{m}a_{r}=a_{1}S,\]
	we obtain $a_{1}\geqslant n$ as $S>0$. On the other hand, $a_{1}\leqslant S\leqslant n+\delta$.
	
	In the frame chosen in \eqref{eq:diag}, $a_{r}=\|A^{n+r}\|^2$, $r=1,2,\ldots,m$.
	From the definition of $A_{\alpha,\beta}$ and $B_{\alpha,\beta}$, we have
\begin{equation}\label{decomp}
	\begin{aligned}
		{}&\ 3(A_{n+1,\beta}-2B_{n+1,\beta})\\
		={}&-\sum_i 3(h_{iii}^{\beta})^2(\lambda_i^{n+1})^2
		+\sum_{i\neq j}3(h_{iij}^{\beta})^2
		\big[(\lambda_j^{n+1})^2-4\lambda_j^{n+1}\lambda_i^{n+1}\big]\\
		&+\sum_{i, j, k \;\mathrm{distinct}}(h_{ijk}^{\beta})^2
		\big[(\lambda_i^{n+1})^2+(\lambda_j^{n+1})^2+(\lambda_k^{n+1})^2\\
		&\qquad\qquad -2\lambda_i^{n+1}\lambda_j^{n+1}
		-2\lambda_j^{n+1}\lambda_k^{n+1}
		-2\lambda_i^{n+1}\lambda_k^{n+1}\big].
	\end{aligned}
\end{equation}
	For distinct $i,j,k$, Lemma~\ref{lem:XLX-triple} implies
	\begin{align*}
	&-2\lambda_i^{n+1}\lambda_j^{n+1}-2\lambda_j^{n+1}\lambda_k^{n+1}-2\lambda_i^{n+1}\lambda_k^{n+1}-4\\
	<&[4(\lambda_{i}^{n+1}-\lambda_{j}^{n+1})^2(1+\lambda_{i}^{n+1}\lambda_{j}^{n+1})^2+4(\lambda_{j}^{n+1}-\lambda_{k}^{n+1})^2(1+\lambda_{j}^{n+1}\lambda_{k}^{n+1})^2\\
	&+4(\lambda_{i}^{n+1}-\lambda_{k}^{n+1})^2(1+\lambda_{i}^{n+1}\lambda_{k}^{n+1})^2]^{\frac{1}{3}}\\
	\leqslant& (2F_1)^{\frac{1}{3}}.
\end{align*}
	Since $c>\frac{24}{5}-\frac{16}{6}>2$, we get
	\[
	(\lambda_i^{n+1})^2+(\lambda_j^{n+1})^2+(\lambda_k^{n+1})^2 - 2\lambda_i^{n+1}\lambda_j^{n+1} - 2\lambda_j^{n+1}\lambda_k^{n+1} - 2\lambda_i^{n+1}\lambda_k^{n+1}
	\leqslant a_{1}+4+\sqrt[3]{cF_1}.
	\]
	For $i\ne j$, since \[n+\delta\geqslant a_{1}\geqslant n\geqslant 6 \qquad \text{and}\qquad  (\lambda_{i}^{n+1})^2+(\lambda_{j}^{n+1})^2\leqslant a_{1},\] Lemma \ref{lem:XLX-pair} gives
	\begin{align*}
	(\lambda_j^{n+1})^2-4\lambda_i^{n+1}\lambda_j^{n+1}=&(\lambda_i^{n+1})^2+(\lambda_j^{n+1})^2+4-\left(4\lambda_i^{n+1}\lambda_j^{n+1}+(\lambda_i^{n+1})^2+4\right)\\
	<&a_{1}+4+\sqrt[3]{\frac{16}{5}\left(3-\frac{10}{a_{1}}\right)(\lambda_{i}^{n+1}-\lambda_{j}^{n+1})^2(1+\lambda_{i}^{n+1}\lambda_{j}^{n+1})^2}\\
	\leqslant& a_{1}+4+\sqrt[3]{\left(\frac{24}{5}-\frac{16}{a_{1}}\right)F_1}\\
	\leqslant& a_{1}+4+\sqrt[3]{cF_1}.
	\end{align*}
	Therefore,
	\begin{equation}\label{key-plus}
	\begin{aligned}
		3(A_{n+1,\beta}-2B_{n+1,\beta})
		&\leqslant\left(a_{1}+4+\sqrt[3]{cF_1}\right)
		\left(\sum_{i,j,k\;\mathrm{distinct}}(h_{ijk}^{\beta})^2+3\sum_{i\ne j}(h_{iij}^{\beta})^2\right)\\
		&\leqslant\left(a_{1}+4+\sqrt[3]{cF_1}\right)|\nabla h^{\beta}|^2.
	\end{aligned}
\end{equation}
Moreover, denote by $K_{ij}$ the sectional curvature of the $2$-plane $e_{i}\wedge e_{j}$. By the Gauss equation,
\[
K_{ij}=1+\sum_\beta\lambda_i^\beta\lambda_j^\beta.
\]
Hence \[1+\lambda_{i}^{n+1}\lambda_{j}^{n+1}=K_{ij}-\sum_{\gamma=n+2}^{n+m}\lambda_i^\gamma\lambda_j^\gamma.\] 
Consequently, for every $\rho>0$, we have
\begin{equation}\label{F1}
\begin{aligned}
	F_1&=\sum_{i, j}(\lambda_{i}^{n+1}-\lambda_{j}^{n+1})^2(K_{ij}-\sum_{\gamma=n+2}^{n+m}\lambda_i^\gamma\lambda_j^\gamma)^2\\
	&\leqslant(1+\rho)\sum_{i,j}(\lambda_{i}^{n+1}-\lambda_{j}^{n+1})^2K_{ij}^2
	+(1+\rho^{-1})\sum_{i,j}(\lambda_{i}^{n+1}-\lambda_{j}^{n+1})^2(\sum_{\gamma=n+2}^{n+m}\lambda_i^\gamma\lambda_j^\gamma)^2.
\end{aligned}
\end{equation}
By the Cauchy inequality,
\[\big(\sum_{\gamma\geqslant n+2}\lambda_i^\gamma\lambda_j^\gamma\big)^2\leqslant\big(\sum_{\gamma\geqslant n+2}(\lambda_i^\gamma)^2\big)\big(\sum_{\gamma\geqslant n+2}(\lambda_j^\gamma)^2\big).\]
Thus \eqref{F1} becomes
\begin{equation}\label{F1F}
\begin{aligned}
	F_{1}&\leqslant(1+\rho)F+(1+\rho^{-1})\left[2b\sum_{i}(\lambda_{i}^{n+1})^2\sum_{\gamma\geqslant n+2}(\lambda_{i}^{\gamma})^2-2\bigg(\sum_{i}\lambda_{i}^{n+1}\sum_{\gamma\geqslant n+2}(\lambda_{i}^{\gamma})^2\bigg)^2\right]\\
	&\leqslant(1+\rho)F+2b(1+\rho^{-1})\sum_{\gamma\geqslant n+2}\sum_{i}(\lambda_{i}^{n+1})^2(\lambda_{i}^{\gamma})^2\\
	&\leqslant(1+\rho)F+(1+\rho^{-1})\frac{n-2}{n}(S-\epsilon)\epsilon^2,
\end{aligned}
\end{equation}
where the last step uses the sharp quartic estimate \eqref{eq:opt-three}, $b\leqslant \epsilon$, and the monotonicity of $t^2(S-t)$ on $0\leqslant t\leqslant\epsilon<\frac{S}{2}$.
Finally, \eqref{key}, \eqref{key-plus} and \eqref{F1F} yield
\begin{align*}
	&3\sum_{\alpha,\beta}(A_{\alpha,\beta}-2B_{\alpha,\beta})\\=&3\sum_{\beta}(A_{n+1,\beta}-2B_{n+1,\beta})+3\sum_{\beta}\sum_{\gamma\geqslant n+2}(A_{\gamma,\beta}-2B_{\gamma,\beta})\\
	\leqslant& \left(a_{1}+4+\sqrt[3]{cF_1}\right)|\nabla h|^2+\Gamma_n b|\nabla h|^2\\
	\leqslant&\left(S+4+(\Gamma_n-1)\epsilon+\sqrt[3]{cF_{1}}\right)|\nabla h|^2\\
	\leqslant&\left(S+4+(\Gamma_n-1)\epsilon+\sqrt[3]{c(1+\rho)F+c(1+\rho^{-1})\frac{n-2}{n}(S-\epsilon)\epsilon^2}\right)|\nabla h|^2.
\end{align*}
For $3\leqslant n\leqslant5$, the additional assumption $n+\delta<6$ gives $S\leqslant n+\delta<6$, and hence $a_{1}<6$. Therefore, for any $i,j$,
\[(\lambda_{i}^{n+1})^2+(\lambda_{j}^{n+1})^2\leqslant a_{1}<6.\]
Thus Lemma~\ref{lem:XLX-pair} can be applied below with $s=6$.
		For distinct $i,j,k$, Lemma~\ref{lem:XLX-triple} gives
	\[
		(\lambda_i^{n+1})^2+(\lambda_j^{n+1})^2+(\lambda_k^{n+1})^2 - 2\lambda_i^{n+1}\lambda_j^{n+1} - 2\lambda_j^{n+1}\lambda_k^{n+1} - 2\lambda_i^{n+1}\lambda_k^{n+1}
	\leqslant a_{1}+4+\sqrt[3]{2F_1}.
	\]
	Lemma~\ref{lem:XLX-pair} with $s=6$ gives
	\[
		-\bigl((\lambda_i^{n+1})^2+4\lambda_i^{n+1}\lambda_j^{n+1}+4\bigr)^3
	<\frac{64}{15}(\lambda_i^{n+1}-\lambda_j^{n+1})^2
	(1+\lambda_i^{n+1}\lambda_j^{n+1})^2.
	\]
	Since $F_1$ contains both ordered pairs $(i,j)$ and $(j,i)$, $F_1\geqslant2(\lambda_i^{n+1}-\lambda_j^{n+1})^2(1+\lambda_i^{n+1}\lambda_j^{n+1})^2$. Thus
	\begin{align*}
	(\lambda_j^{n+1})^2-4\lambda_i^{n+1}\lambda_j^{n+1}=&(\lambda_i^{n+1})^2+(\lambda_j^{n+1})^2+4-\left(4\lambda_i^{n+1}\lambda_j^{n+1}+(\lambda_i^{n+1})^2+4\right)\\
	\leqslant& a_1+4+\sqrt[3]{\frac{32}{15}F_1}.
\end{align*} 
Hence, by \eqref{key}, \eqref{decomp} and \eqref{F1F}, the estimate \eqref{sumA-2B} holds.
\end{proof}

We next use the following integral formula for $\sum\limits_{\alpha,\beta}(A_{\alpha,\beta}-2B_{\alpha,\beta})$, which generalizes Theorem 4 of Peng and Terng \cite{Peng-T2}.
\begin{lem}\label{AAA}
		Let $M^n$ be a closed minimal submanifold in the unit sphere $\mathbb{S}^{n+m}$ $(m\geqslant1)$ with flat normal bundle. Then
\begin{align}
		\int_{M}&\sum_{\alpha,\beta}(A_{\alpha,\beta}-2B_{\alpha,\beta})dM\label{1stoccur}\\
		=\int_{M}\bigg\lbrace&-\frac{1}{4}|\nabla S|^2+\sum_{i,\alpha,\beta,\gamma}\lambda_{i}^{\alpha}(\lambda_{i}^{\beta})^2\lambda_{i}^{\gamma}\langle A^{\alpha},A^{\gamma}\rangle-S^2-\sum_{i,k,\alpha,\beta,\gamma}(\lambda_{k}^{\alpha})^2\lambda_{k}^{\gamma}(\lambda_{i}^{\beta})^2\lambda_{i}^{\gamma}\notag\\
		&+\sum_{i, j,k,\alpha,\beta}\Big(4\lambda_{k}^{\alpha}\lambda_{i}^{\beta}h_{ijk}^{\alpha}h_{ijk}^{\beta}-\lambda_{i}^{\alpha}\lambda_{j}^{\alpha}h_{iik}^{\beta}h_{jjk}^{\beta}-\lambda_{i}^{\alpha}\lambda_{j}^{\beta}h_{jjk}^{\alpha}h_{iik}^{\beta}-2\lambda_{i}^{\alpha}\lambda_{i}^{\beta}h_{ijk}^{\alpha}h_{ijk}^{\beta}\Big)\notag\\
		&+2\Big(\sum_{i,\alpha,\beta,\gamma}\lambda_{i}^{\alpha}(\lambda_{i}^{\beta})^2\lambda_{i}^{\gamma}\langle A^{\alpha},A^{\gamma}\rangle-|\mathcal{A}|^2-\sum_{\alpha,\beta,\gamma}\big(\sum_{i}\lambda_{i}^{\alpha}\lambda_{i}^{\beta}\lambda_{i}^{\gamma}\big)^2\Big)\bigg\rbrace dM.\notag
\end{align}
\end{lem}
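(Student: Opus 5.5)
The plan is to generalize Peng--Terng's derivation of their Theorem~4: express $\sum_{\alpha,\beta}(A_{\alpha,\beta}-2B_{\alpha,\beta})$, up to a divergence, through third-order and lower-order quantities, and then integrate. I work throughout in the frame \eqref{eq:diag} at a point $q$, so that $h_{ij}^\alpha=\lambda_i^\alpha\delta_{ij}$ and $\langle A^\alpha,A^\beta\rangle=a_\alpha\delta_{\alpha\beta}$. Only tensorial identities are used, so each pointwise identity holds globally. In this frame,
\[
\sum_{\alpha,\beta}A_{\alpha,\beta}=\sum(\lambda_k^\alpha)^2(h_{ijk}^\beta)^2,\qquad \sum_{\alpha,\beta}B_{\alpha,\beta}=\sum\lambda_i^\alpha\lambda_j^\alpha(h_{ijk}^\beta)^2 .
\]

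The first step is to rewrite each of these as a divergence plus correction terms. Consider the vector fields
\[
X_k=\sum h^\beta_{ijk}h^\beta_{ij}\,(h^\alpha)^2_{kk}\quad\text{and}\quad Y_k=\sum h_{ij}^\beta h_{jl}^\alpha h_{lik}^\beta h^\alpha_{\cdot\cdot},
\]
together with the analogous contractions. Their divergences contain $\sum h_{ijkk}^\beta\cdot(\cdots)$, which I replace using \eqref{Lu} (equivalently \eqref{matrix}). Under flatness, \eqref{matrix} reduces to $\Delta A^\alpha=nA^\alpha-\sum_\beta\langle A^\alpha,A^\beta\rangle A^\beta$, because the double commutator vanishes. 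The commutations $h_{ijkl}^\alpha-h_{ijlk}^\alpha$ produce Gauss-curvature terms $R_{ijkl}=\delta\delta-\delta\delta+\sum_\beta\lambda\lambda$, while the normal curvature terms vanish.

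Concretely, I integrate $\sum h^\beta_{ijk}h^\beta_{ijk}(\lambda^\alpha_k)^2$ by parts as $\int\langle\nabla_k(h^\beta_{ij}),\ldots\rangle$, and similarly for the $B$-term. The term $\tfrac14|\nabla S|^2$ enters through $\sum_\beta\langle A^\beta,\nabla_kA^\beta\rangle=\tfrac12 S_k$. Lemma~\ref{fine} is used to symmetrize mixed terms $\lambda_i^\alpha h_{iik}^\beta$ in $\alpha$ and $\beta$. This symmetrization yields the term $\lambda_i^\alpha\lambda_j^\beta h_{jjk}^\alpha h_{iik}^\beta$, and relation \eqref{letandsum} handles the off-diagonal derivative terms. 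The quartic and sextic $\lambda$-terms in the statement arise from substituting $\Delta A^\alpha$ and the curvature identity. For example, $\sum_\gamma\langle A^\alpha,A^\gamma\rangle\lambda^\gamma$ comes from the $\langle A^\alpha,A^\beta\rangle A^\beta$ part. The terms $-S^2$ and $-|\mathcal A|^2$ come from the constant-curvature parts $\delta_{ik}\delta_{jl}$ combined with minimality, and the cubic-trace term $\big(\sum_i\lambda_i^\alpha\lambda_i^\beta\lambda_i^\gamma\big)^2$ comes from contracting $R_{ijij}$ against $h^\alpha h^\beta h^\gamma$.

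The second step is bookkeeping. I sum the two integrated identities, $A$ minus twice $B$, and collect terms into the three groups displayed in \eqref{1stoccur}: gradient terms, mixed second-order$\times$third-order terms, and purely algebraic terms. As a consistency check, for $m=1$ the result should reduce to Peng--Terng's formula.

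The main obstacle is the sign and coefficient tracking in the commutation terms: in particular the factor $4\lambda_k^\alpha\lambda_i^\beta h_{ijk}^\alpha h_{ijk}^\beta$ and the factor $2$ in front of the last group. Flatness, through \eqref{letandsum}, is essential here, since it lets the cross terms between different normal directions be moved from $\alpha$ to $\beta$. I would first run the computation with $m=1$ to calibrate the coefficients, and then redo it with the normal indices inserted, checking each term's symmetry in $\alpha$ and $\beta$.
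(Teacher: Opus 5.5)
There is a genuine gap. Your proposal never specifies an integration by parts that actually yields \eqref{1stoccur}, and the one concrete mechanism it names points the wrong way. (Your $X_k$, $Y_k$ are also not well formed: $k$ occurs three times in $X_k$, and $Y_k$ has placeholder indices.)

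Abbreviate
\begin{itemize}
\item $Q_1=\sum_{i,\alpha,\beta,\gamma}\lambda_i^\alpha(\lambda_i^\beta)^2\lambda_i^\gamma\langle A^\alpha,A^\gamma\rangle$,
\item $Q_2=\sum_{i,k,\alpha,\beta,\gamma}(\lambda_k^\alpha)^2\lambda_k^\gamma(\lambda_i^\beta)^2\lambda_i^\gamma$,
\item $Q_3=\sum_{\alpha,\beta,\gamma}(\sum_i\lambda_i^\alpha\lambda_i^\beta\lambda_i^\gamma)^2$,
\item $J_1=\sum\lambda_k^\alpha\lambda_i^\beta h_{ijk}^\alpha h_{ijk}^\beta$ and $J_2=\sum\lambda_i^\alpha\lambda_i^\beta h_{ijk}^\alpha h_{ijk}^\beta$,
\item $J_3=\sum\lambda_i^\alpha\lambda_j^\alpha h_{iik}^\beta h_{jjk}^\beta$ and $J_4=\sum\lambda_i^\alpha\lambda_j^\beta h_{jjk}^\alpha h_{iik}^\beta$.
\end{itemize}

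\textbf{The $B$-term.} Moving the derivative off $h^\beta_{ijk}$ in $\sum B_{\alpha,\beta}$ does produce $\Delta h^\beta$. Substituting \eqref{Lu} then gives
\[
\int_M\sum_{\alpha,\beta}B_{\alpha,\beta}\,dM=\int_M\Big(Q_1-n\sum_{i,\alpha,\beta}(\lambda_i^\alpha)^2(\lambda_i^\beta)^2-2J_2\Big)dM .
\]
This carries an explicit factor $n$ that does not occur in \eqref{1stoccur}, and nothing in your plan removes it.

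\textbf{The $A$-term.} Here the by-parts step gives $\sum h^\beta_{ij}h^\beta_{ijpk}T_{pk}$ with $T=\sum_\alpha(A^\alpha)^2$, which is not a Laplacian, so \eqref{Lu} does not apply. What works is Codazzi ($h^\beta_{ijpk}=h^\beta_{pijk}$), then the Ricci identity (normal curvature drops out by flatness), then a second integration by parts using $\sum_jh^\beta_{ijj}=0$. Done this way you get a correct but differently shaped identity:
\[
\int_M\sum_{\alpha,\beta}(A_{\alpha,\beta}-2B_{\alpha,\beta})\,dM=\int_M\Big(-\tfrac14|\nabla S|^2+Q_1-S^2-Q_2+2J_1-2\sum_{\alpha,\beta}B_{\alpha,\beta}\Big)dM .
\]
In it, the group $2(Q_1-|\mathcal A|^2-Q_3)$ and the terms $-J_3-J_4-2J_2$ of \eqref{1stoccur} are absent.

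\textbf{The missing idea} is the step that creates exactly those terms.
\begin{itemize}
\item The paper starts from $\int_M\sum_{i,j,\alpha}h_{ij}^\alpha\big(\sum_\beta\operatorname{Tr}(A^\alpha A^\beta A^\beta)\big)_{ij}dM=0$, which holds by Codazzi and minimality.
\item It expands this and commutes $h_{iikk}\to h_{kkii}$ using $h^\beta_{iikk}-h^\beta_{kkii}=(\lambda^\beta_i-\lambda^\beta_k)(1+\sum_\gamma\lambda^\gamma_i\lambda^\gamma_k)$.
\item This commutation gives $Q_1-S^2-Q_2$ from the term $\sum h^\alpha_{iikk}\lambda^\alpha_k(\lambda^\beta_i)^2$. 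It gives $Q_1-|\mathcal A|^2-Q_3$ from the term $2E$, where $E=\sum h_{iikk}^\beta\lambda_k^\alpha\lambda_i^\alpha\lambda_i^\beta$.
\item The leftover $\sum h_{kkii}^\beta\lambda_k^\alpha\lambda_i^\alpha\lambda_i^\beta$ is rewritten through the Hessian of $\langle A^\alpha,A^\beta\rangle$. It reappears with opposite sign, so one solves for it as in \eqref{yx}; a further integration by parts gives $-J_3-J_4-2J_2$.
\end{itemize}

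Equivalently, from the corrected direct route you would still need the relation obtained by computing $\int_M E\,dM$ two ways:
\begin{itemize}
\item directly by parts it equals $-\int_M(J_1+\sum B_{\alpha,\beta})\,dM$;
\item after commuting it equals $\int_M\big(-\frac12(J_3+J_4)-J_2+Q_1-|\mathcal A|^2-Q_3\big)dM$.
\end{itemize}

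Calibrating on $m=1$ cannot supply this, because there several distinct terms coincide: $J_3=J_4=\frac14|\nabla S|^2$, $J_1=\sum B_{\alpha,\beta}$, $Q_2=Q_3$ and $|\mathcal A|^2=S^2$.

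Finally, flatness enters only in two places: the simultaneous diagonalization of all $A^\alpha$, and the vanishing of $R_{\alpha\beta kl}$ in the Ricci identities. Lemma~\ref{fine} and \eqref{letandsum} are not used. Moreover, \eqref{1stoccur} keeps $J_3$ and $J_4$ as separate terms, so no symmetrization occurs.
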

\begin{proof}
Since the normal bundle of $M$ is flat, near any given point $q\in M$ we can choose a local orthonormal tangent frame $\{e_{1},e_{2},\ldots,e_n\}$ and an orthonormal normal frame so that, for all $\alpha$, $h_{ij}^{\alpha}=\lambda_{i}^{\alpha}\delta_{ij}$ at $q$. Then
\begin{equation}\label{Will}
	\begin{aligned}
		&\sum_{i,j,\alpha}h_{ij}^{\alpha}\Big(\sum_{\beta}\operatorname{Tr}(A^{\alpha}A^{\beta}A^{\beta})\Big)_{ij}\\
		=&\sum_{k,\alpha}\lambda_{k}^{\alpha}\Big(\sum_{\beta,l,m,p} h_{lm}^{\alpha}h_{mp}^{\beta}h_{pl}^{\beta}\Big)_{kk}\\
		=&\sum_{k,\alpha}\lambda_{k}^{\alpha}\Big(\sum_{i,\beta}h_{iikk}^{\alpha}(\lambda_{i}^{\beta})^2+2\sum_{i, j,\beta}\lambda_{i}^{\alpha}(h_{ijk}^{\beta})^2+4\sum_{i, j,\beta}\lambda_{i}^{\beta}h_{ijk}^{\alpha}h_{ijk}^{\beta}+2\sum_{i,\beta}h_{iikk}^{\beta}\lambda_{i}^{\alpha}\lambda_{i}^{\beta}\Big).
	\end{aligned}
\end{equation}
By the Ricci formula, we have
\begin{equation}\label{Ricc}
	\begin{aligned}
	\sum_{i,k,\alpha,\beta}h_{iikk}^{\alpha}\lambda_{k}^{\alpha}(\lambda_{i}^{\beta})^2=&\sum_{i,k,\alpha,\beta}\Big(h_{kkii}^{\alpha}+(\lambda_{i}^{\alpha}-\lambda_{k}^{\alpha})(1+\sum_{\gamma}\lambda_{i}^{\gamma}\lambda_{k}^{\gamma})\Big)\lambda_{k}^{\alpha}(\lambda_{i}^{\beta})^2\\
	=&\sum_{i,\alpha,\beta}\Big(\frac{1}{2}(\sum_{p,l}(h_{pl}^{\alpha})^2)_{ii}-\sum_{j,k}(h_{ijk}^{\alpha})^2\Big)(\lambda_{i}^{\beta})^2+\sum_{i,k,\alpha,\beta,\gamma}\lambda_{i}^{\alpha}\lambda_{k}^{\alpha}(\lambda_{i}^{\beta})^2\lambda_{i}^{\gamma}\lambda_{k}^{\gamma}\\
	&-\sum_{\alpha,\beta}\|A^{\alpha}\|^2\|A^{\beta}\|^2-\sum_{i,k,\alpha,\beta,\gamma}(\lambda_{k}^{\alpha})^2(\lambda_{i}^{\beta})^2\lambda_{i}^{\gamma}\lambda_{k}^{\gamma}.
	\end{aligned}
\end{equation}
Substituting \eqref{Ricc} into \eqref{Will}, we obtain
\begin{align}
		&\sum_{i,j,\alpha}h_{ij}^{\alpha}
		\Big(\sum_{\beta}\operatorname{Tr}(A^{\alpha}A^{\beta}A^{\beta})\Big)_{ij}\label{sst}\\
		={}&-\sum_{\alpha,\beta}(A_{\alpha,\beta}-2B_{\alpha,\beta})
		+\frac{1}{2}\sum_{i,j,k,\alpha,\beta}h_{ik}^{\beta}h_{kj}^{\beta}
		\Big(\sum_{p,l}(h_{pl}^{\alpha})^2\Big)_{ij}\notag\\
		&+\sum_{i,\alpha,\beta,\gamma}\lambda_{i}^{\alpha}(\lambda_{i}^{\beta})^2\lambda_{i}^{\gamma}
		\langle A^{\alpha},A^{\gamma}\rangle-S^2\notag-\sum_{i,k,\alpha,\beta,\gamma}(\lambda_{k}^{\alpha})^2\lambda_{k}^{\gamma}
		(\lambda_{i}^{\beta})^2\lambda_{i}^{\gamma}\\
		&+4\sum_{i,j,k,\alpha,\beta}\lambda_{k}^{\alpha}\lambda_{i}^{\beta}h_{ijk}^{\alpha}h_{ijk}^{\beta}+2\sum_{i,k,\alpha,\beta}h_{iikk}^{\beta}\lambda_{k}^{\alpha}\lambda_{i}^{\alpha}\lambda_{i}^{\beta}.\notag
\end{align}
Integrating \eqref{sst} over $M$ and using Stokes' theorem and the minimality of $M$ yields
\begin{align}
		\int_{M}\sum_{\alpha,\beta}(A_{\alpha,\beta}-2B_{\alpha,\beta})dM
		=\int_{M}\bigg(&-\frac{1}{2}\sum_{i,j,k,\alpha,\beta}
		(h_{ik}^{\beta}h_{kj}^{\beta})_{j}\Big(\sum_{p,l}(h_{pl}^{\alpha})^2\Big)_{i}\label{zjb}\\
		&+\sum_{i,\alpha,\beta,\gamma}\lambda_{i}^{\alpha}(\lambda_{i}^{\beta})^2\lambda_{i}^{\gamma}
		\langle A^{\alpha},A^{\gamma}\rangle-S^2\notag\\
		&-\sum_{i,k,\alpha,\beta,\gamma}(\lambda_{k}^{\alpha})^2\lambda_{k}^{\gamma}
		(\lambda_{i}^{\beta})^2\lambda_{i}^{\gamma}
		+4\sum_{i,j,k,\alpha,\beta}\lambda_{k}^{\alpha}\lambda_{i}^{\beta}h_{ijk}^{\alpha}h_{ijk}^{\beta}\notag\\
	&+2\sum_{i,k,\alpha,\beta}h_{iikk}^{\beta}\lambda_{k}^{\alpha}\lambda_{i}^{\alpha}\lambda_{i}^{\beta}\bigg)dM.\notag
\end{align}
On the other hand,
\begin{equation}\label{Ricc2}
	\begin{aligned}
		\sum_{i,k,\alpha,\beta}h_{iikk}^{\beta}\lambda_{k}^{\alpha}\lambda_{i}^{\alpha}\lambda_{i}^{\beta}
		={}&\sum_{i,k,\alpha,\beta}
		\Big(h_{kkii}^{\beta}+(\lambda_{i}^{\beta}-\lambda_{k}^{\beta})
		\big(1+\sum_{\gamma}\lambda_{i}^{\gamma}\lambda_{k}^{\gamma}\big)\Big)
		\lambda_{k}^{\alpha}\lambda_{i}^{\alpha}\lambda_{i}^{\beta}\\
		={}&\sum_{i,k,\alpha,\beta}h_{kkii}^{\beta}\lambda_{k}^{\alpha}\lambda_{i}^{\alpha}\lambda_{i}^{\beta}
		+\sum_{i,\alpha,\beta,\gamma}\lambda_{i}^{\alpha}(\lambda_{i}^{\beta})^2\lambda_{i}^{\gamma}
		\langle A^{\alpha},A^{\gamma}\rangle\\
		&-|\mathcal{A}|^2
		-\sum_{\alpha,\beta,\gamma}\big(\sum_{i}\lambda_{i}^{\alpha}\lambda_{i}^{\beta}\lambda_{i}^{\gamma}\big)^2.
	\end{aligned}
\end{equation}
Since
\begin{align*}
	&\sum_{i,k,\alpha,\beta}h_{kkii}^{\beta}\lambda_{k}^{\alpha}\lambda_{i}^{\alpha}\lambda_{i}^{\beta}\\[-1mm]
	={}&\sum_{i,\alpha,\beta}\bigg(\Big(\sum_{p,l}h_{pl}^{\alpha}h_{pl}^{\beta}\Big)_{ii}
	-\sum_{p,l}h_{plii}^{\alpha}h_{pl}^{\beta}
	-2\sum_{j,k}h_{jki}^{\alpha}h_{jki}^{\beta}\bigg)
	\lambda_{i}^{\alpha}\lambda_{i}^{\beta}\\
	={}&\sum_{i,j,k,\alpha,\beta}\Big(\sum_{p,l}h_{pl}^{\alpha}h_{pl}^{\beta}\Big)_{ij}
	h_{ik}^{\alpha}h_{kj}^{\beta}
	-2\sum_{i,j,k,\alpha,\beta}\lambda_{i}^{\alpha}\lambda_{i}^{\beta}h_{ijk}^{\alpha}h_{ijk}^{\beta}-\sum_{i,k,\alpha,\beta}h_{kkii}^{\beta}\lambda_{k}^{\alpha}\lambda_{i}^{\alpha}\lambda_{i}^{\beta}.
\end{align*}
Hence, we have
\begin{equation}\label{yx}
	2\sum_{i,k,\alpha,\beta}h_{kkii}^{\beta}\lambda_{k}^{\alpha}\lambda_{i}^{\alpha}\lambda_{i}^{\beta}=\sum_{i,j,k,\alpha,\beta}\Big(\sum_{p,l}h_{pl}^{\alpha}h_{pl}^{\beta}\Big)_{ij}h_{ik}^{\alpha}h_{kj}^{\beta}-2\sum_{i, j,k,\alpha,\beta}\lambda_{i}^{\alpha}\lambda_{i}^{\beta}h_{ijk}^{\alpha}h_{ijk}^{\beta}.
\end{equation}
By Stokes' theorem and the minimality of $M$, integrating both sides of \eqref{yx} yields
\begin{align}
		&2\int_{M}\sum_{i,k,\alpha,\beta}h_{kkii}^{\beta}
		\lambda_{k}^{\alpha}\lambda_{i}^{\alpha}\lambda_{i}^{\beta}dM\label{int2}\\
		={}&\int_{M}\sum_{i,j,k,\alpha,\beta}\bigg(
		-h_{ikj}^{\alpha}h_{kj}^{\beta}\Big(\sum_{p,l}h_{pl}^{\alpha}h_{pl}^{\beta}\Big)_{i}
		-2\lambda_{i}^{\alpha}\lambda_{i}^{\beta}h_{ijk}^{\alpha}h_{ijk}^{\beta}\bigg)dM\notag\\
		={}&\int_{M}\sum_{i,j,k,\alpha,\beta}\Big(
		-\lambda_{i}^{\alpha}\lambda_{j}^{\alpha}h_{iik}^{\beta}h_{jjk}^{\beta}
		-\lambda_{i}^{\alpha}\lambda_{j}^{\beta}h_{jjk}^{\alpha}h_{iik}^{\beta}\notag
		-2\lambda_{i}^{\alpha}\lambda_{i}^{\beta}h_{ijk}^{\alpha}h_{ijk}^{\beta}\Big)dM.\notag
\end{align}
Combining \eqref{Ricc2} with \eqref{int2}, we obtain
\begin{equation*}
	\begin{aligned}
	2\int_{M}&\sum_{i,k,\alpha,\beta}h_{iikk}^{\beta}\lambda_{k}^{\alpha}\lambda_{i}^{\alpha}\lambda_{i}^{\beta}dM\\
	=\int_{M}&\bigg\lbrace\sum_{i,j,k,\alpha,\beta}\Big(-\lambda_{i}^{\alpha}\lambda_{j}^{\alpha}h_{iik}^{\beta}h_{jjk}^{\beta}-\lambda_{i}^{\alpha}\lambda_{j}^{\beta}h_{jjk}^{\alpha}h_{iik}^{\beta}-2\lambda_{i}^{\alpha}\lambda_{i}^{\beta}h_{ijk}^{\alpha}h_{ijk}^{\beta}\Big)\\
	&+2\Big(\sum_{i,\alpha,\beta,\gamma}\lambda_{i}^{\alpha}(\lambda_{i}^{\beta})^2\lambda_{i}^{\gamma}\langle A^{\alpha},A^{\gamma}\rangle-|\mathcal{A}|^2-\sum_{\alpha,\beta,\gamma}\big(\sum_{i}\lambda_{i}^{\alpha}\lambda_{i}^{\beta}\lambda_{i}^{\gamma}\big)^2\Big)\bigg\rbrace dM.
	\end{aligned}
\end{equation*}
Therefore, this together with \eqref{zjb} gives \eqref{1stoccur}.
\end{proof}
We also need the following algebraic lemma.
\begin{lem}\label{2026}
	Let $\mathtt{a}_{ij}^{\alpha}$, $\mathtt{b}_{i}^{\alpha}$ $(i,j=1,\ldots,n;\alpha=1,\ldots,m)$ be real numbers satisfying
	\begin{equation*}
		\begin{aligned}
			\sum_{i}\mathtt{b}_{i}^{\alpha}=0,\quad \sum_{i,\alpha}(\mathtt{b}_{i}^{\alpha})^2=\mathcal{B}(> 0),\quad \sum_{i,j,\alpha}(\mathtt{b}_{i}^{\alpha}+\mathtt{b}_{j}^{\alpha})\mathtt{a}_{ij}^{\alpha}=\mathcal{C}.
		\end{aligned}
	\end{equation*}
	Then we have
	\begin{equation*}
		\begin{aligned}
			\sum_{i,\alpha}(\mathtt{a}_{ii}^{\alpha})^{2}+3\sum_{\alpha}\sum_{i\neq j}(\mathtt{a}_{ij}^{\alpha})^{2}\geqslant \frac{3\mathcal{C}^{2}}{2(n+4)\mathcal{B}}.
		\end{aligned}
	\end{equation*}
\end{lem}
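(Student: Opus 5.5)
The plan is to treat the left-hand side as a positive definite quadratic form in the unknowns $\mathtt{a}_{ij}^{\alpha}$, and the quantity $\mathcal{C}$ as a linear functional of the same unknowns. A weighted Cauchy--Schwarz inequality then bounds $\mathcal{C}^2$ by the quadratic form times the dual norm of the functional. I do not assume $\mathtt{a}_{ij}^{\alpha}$ is symmetric in $i,j$; every ordered pair $(i,j)$ is treated as a separate variable, which is harmless.

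First split the constraint into diagonal and off-diagonal parts:
\[
\mathcal{C}=\sum_{i,\alpha}2\mathtt{b}_{i}^{\alpha}\mathtt{a}_{ii}^{\alpha}+\sum_{\alpha}\sum_{i\neq j}(\mathtt{b}_{i}^{\alpha}+\mathtt{b}_{j}^{\alpha})\mathtt{a}_{ij}^{\alpha}.
\]
Write the off-diagonal terms as $\frac{\mathtt{b}_{i}^{\alpha}+\mathtt{b}_{j}^{\alpha}}{\sqrt3}\cdot\sqrt3\,\mathtt{a}_{ij}^{\alpha}$ and apply Cauchy--Schwarz to the whole sum. This gives
\[
\mathcal{C}^2\leqslant\Big(\sum_{i,\alpha}(\mathtt{a}_{ii}^{\alpha})^2+3\sum_{\alpha}\sum_{i\neq j}(\mathtt{a}_{ij}^{\alpha})^2\Big)\Big(4\sum_{i,\alpha}(\mathtt{b}_{i}^{\alpha})^2+\frac13\sum_{\alpha}\sum_{i\neq j}(\mathtt{b}_{i}^{\alpha}+\mathtt{b}_{j}^{\alpha})^2\Big).
\]

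It remains to evaluate the second factor, and this is the only step where the hypothesis $\sum_i\mathtt{b}_i^{\alpha}=0$ enters. For each fixed $\alpha$,
\[
\sum_{i,j}(\mathtt{b}_{i}^{\alpha}+\mathtt{b}_{j}^{\alpha})^2=2n\sum_i(\mathtt{b}_{i}^{\alpha})^2+2\Big(\sum_i\mathtt{b}_{i}^{\alpha}\Big)^2=2n\sum_i(\mathtt{b}_{i}^{\alpha})^2.
\]
The diagonal terms $i=j$ contribute $4\sum_i(\mathtt{b}_i^{\alpha})^2$. Subtracting them and summing over $\alpha$ gives
\[
\sum_{\alpha}\sum_{i\neq j}(\mathtt{b}_{i}^{\alpha}+\mathtt{b}_{j}^{\alpha})^2=(2n-4)\mathcal{B}.
\]
Hence the second factor equals
\[
4\mathcal{B}+\frac{(2n-4)\mathcal{B}}{3}=\frac{2(n+4)}{3}\mathcal{B}.
\]

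Since $\mathcal{B}>0$, we may divide by this factor, which yields the stated lower bound $\frac{3\mathcal{C}^2}{2(n+4)\mathcal{B}}$. There is no genuine obstacle in this argument. The only point needing care is the bookkeeping of ordered versus unordered pairs, so that the weight $3$ on off-diagonal terms and the factor $2$ in the diagonal coefficient $2\mathtt{b}_i^{\alpha}$ combine correctly. Equality holds when $\mathtt{a}_{ii}^{\alpha}\propto 2\mathtt{b}_i^{\alpha}$ and $\mathtt{a}_{ij}^{\alpha}\propto\frac13(\mathtt{b}_i^{\alpha}+\mathtt{b}_j^{\alpha})$ for $i\neq j$, with the same proportionality constant, which shows the constant is sharp.
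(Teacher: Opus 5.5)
Your proposal is correct and follows the paper's proof essentially line for line: you use the same diagonal/off-diagonal splitting with the $\sqrt{3}$ weighting, the same Cauchy--Schwarz inequality, and the same use of $\sum_i \mathtt{b}_i^{\alpha}=0$ to evaluate the second factor as $\frac{2(n+4)}{3}\mathcal{B}$. Your added equality analysis ($\mathtt{a}_{ii}^{\alpha}=2t\,\mathtt{b}_i^{\alpha}$ and $\mathtt{a}_{ij}^{\alpha}=\frac{t}{3}(\mathtt{b}_i^{\alpha}+\mathtt{b}_j^{\alpha})$ for $i\neq j$) is also correct and shows the constant is sharp.
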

\begin{proof}
	Since
	\begin{align*}
		\mathcal{C}=\sum_{i,\alpha}2\mathtt{b}_{i}^{\alpha}\mathtt{a}_{ii}^{\alpha}+\sum_{\alpha}\sum_{i\ne j}\frac{\mathtt{b}_{i}^{\alpha}+\mathtt{b}_{j}^{\alpha}}{\sqrt{3}}\sqrt{3}\mathtt{a}_{ij}^{\alpha},
	\end{align*}
	the Cauchy inequality gives
	\begin{align}\label{2factor}
		\mathcal C^2
		\leqslant
		\Big(
		\sum_{i,\alpha}(\mathtt{a}_{ii}^{\alpha})^2
		+3\sum_{\alpha}\sum_{i\ne j}(\mathtt{a}_{ij}^{\alpha})^2
		\Big)
		\Big(
		\sum_{i,\alpha}(2\mathtt{b}_i^{\alpha})^2
		+\frac13\sum_{\alpha}\sum_{i\ne j}(\mathtt{b}_i^{\alpha}+\mathtt{b}_j^{\alpha})^2
		\Big).
	\end{align}
	For each fixed $\alpha$, since $\sum\limits_i \mathtt{b}_i^{\alpha}=0$, we have
	\[
	\sum_{i\ne j}(\mathtt{b}_i^{\alpha}+\mathtt{b}_j^{\alpha})^2
	=2(n-2)\sum_i(\mathtt{b}_i^{\alpha})^2.
	\]
	Hence, the second factor in \eqref{2factor} equals
	$\frac{2(n+4)}3\mathcal B$, which completes the proof.
\end{proof}
Finally, we provide a lower bound for $|\nabla^2 h|^2$.
\begin{lem}\label{xj}
		Let $M^n$ be a closed minimal submanifold in $\mathbb{S}^{n+m}$ with flat normal bundle. If $S=0$, then $h\equiv0$. If $S>0$, then
		\begin{equation}\label{0xj}
			\begin{aligned}
			|\nabla^2 h|^2\geqslant\frac{3}{4}F+\frac{3\left(nS-|\mathcal{A}|^2\right)^2}{2(n+4)S},
		\end{aligned}
		\end{equation}
	where $F$, defined in \eqref{F}, can be expressed as
	 \begin{align*}
		F=2\Big(\sum_{i,\alpha,\beta,\gamma}\lambda_{i}^{\alpha}(\lambda_{i}^{\beta})^2\lambda_{i}^{\gamma}\langle A^{\alpha},A^{\gamma}\rangle-\sum_{\alpha,\beta,\gamma}\big(\sum_{i}\lambda_{i}^{\alpha}\lambda_{i}^{\beta}\lambda_{i}^{\gamma}\big)^2-|\mathcal{A}|^2-(|\mathcal{A}|^2-nS)\Big).
\end{align*}
\end{lem}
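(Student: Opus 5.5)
The plan is to obtain \eqref{0xj} pointwise from a careful split of $\nabla^2h$ in the frame \eqref{eq:diag}. The case $S=0$ is immediate, since then $h\equiv0$, so assume $S>0$. The key observation is that all of $|\nabla^2h|^2$ will be discarded except the components $h_{iiii}^\alpha$ and those whose index multiset is $\{i,i,j,j\}$ with $i\neq j$. These will then be split into a part controlled by the Ricci identity, which produces $\frac34F$, and a symmetric part fed into Lemma~\ref{2026}, which produces the second term.

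\emph{Step 1 (bookkeeping of components).} Since $h^\alpha_{ijk}$ is totally symmetric, for $i\neq j$ the six components of type $\{i,i,j,j\}$ reduce to two values:
\[
h^\alpha_{iijj}=h^\alpha_{ijij}=h^\alpha_{jiij}\qquad\text{and}\qquad h^\alpha_{jjii}=h^\alpha_{jiji}=h^\alpha_{ijji}.
\]
The Ricci formula in the diagonal frame, with $R_{\alpha\beta ij}=0$, gives
\[
h^\alpha_{iijj}-h^\alpha_{jjii}=(\lambda_i^\alpha-\lambda_j^\alpha)K_{ij},\qquad K_{ij}=1+\sum_\beta\lambda_i^\beta\lambda_j^\beta.
\]
Set $a^\alpha_{ij}:=\frac12(h^\alpha_{iijj}+h^\alpha_{jjii})$ for $i\neq j$ and $a^\alpha_{ii}:=h^\alpha_{iiii}$. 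The identity $x^2+y^2=\frac12(x-y)^2+\frac12(x+y)^2$ then gives
\[
|\nabla^2h|^2\geqslant\sum_{i,\alpha}(a^\alpha_{ii})^2+\sum_{\alpha}\sum_{i<j}\Big(\tfrac32(\lambda_i^\alpha-\lambda_j^\alpha)^2K_{ij}^2+6(a^\alpha_{ij})^2\Big)=\tfrac34F+\sum_{i,\alpha}(a^\alpha_{ii})^2+3\sum_\alpha\sum_{i\neq j}(a^\alpha_{ij})^2.
\]
Here the factor $\frac34$ arises because $F$ sums over ordered pairs.

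\emph{Step 2 (the constraint for Lemma~\ref{2026}).} Take $\mathtt b_i^\alpha=\lambda_i^\alpha$, so $\sum_i\mathtt b_i^\alpha=0$ and $\mathcal B=S$. Differentiating the minimality condition $\sum_j h^\alpha_{jj}=0$ twice gives $\sum_j h^\alpha_{jjii}=0$, hence $\sum_{i,j}\lambda_i^\alpha h^\alpha_{jjii}=0$. Using the symmetry $a_{ij}=a_{ji}$,
\[
\mathcal C=\sum_{i,j,\alpha}(\lambda_i^\alpha+\lambda_j^\alpha)a^\alpha_{ij}=2\sum_{i,j,\alpha}\lambda_i^\alpha a^\alpha_{ij}=\sum_{i,j,\alpha}\lambda_i^\alpha\big(h^\alpha_{iijj}+h^\alpha_{jjii}\big)=\sum_{i,\alpha}\lambda_i^\alpha\Delta h^\alpha_{ii}.
\]
This holds for $i=j$ as well, since $2\lambda_i a_{ii}=\lambda_i(h_{iiii}+h_{iiii})$. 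Now $\sum_{i,\alpha}\lambda_i^\alpha\Delta h^\alpha_{ii}=\sum h^\alpha_{ij}\Delta h^\alpha_{ij}$, which by \eqref{matrix} and flatness equals $nS-|\mathcal A|^2$. Lemma~\ref{2026} then bounds the remaining sum below by $\frac{3(nS-|\mathcal A|^2)^2}{2(n+4)S}$, which yields \eqref{0xj}.

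\emph{Step 3 (closed form of $F$).} Expand
\[
(\lambda_i^\alpha-\lambda_j^\alpha)^2\Big(1+\sum_\beta\lambda_i^\beta\lambda_j^\beta\Big)^2
\]
and sum over $i,j,\alpha$. The zeroth-order piece gives $2nS$. Using $\sum_i\lambda_i^\alpha=0$, the cross piece gives $-4|\mathcal A|^2-4\sum\lambda\lambda\lambda$-type terms that partly cancel. The quartic piece gives the terms $2\sum\lambda_i^\alpha(\lambda_i^\beta)^2\lambda_i^\gamma\langle A^\alpha,A^\gamma\rangle-2\sum_{\alpha\beta\gamma}\big(\sum_i\lambda_i^\alpha\lambda_i^\beta\lambda_i^\gamma\big)^2$. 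Collecting everything should give the stated expression, with the constant part totalling $2(nS-2|\mathcal A|^2)$.

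The main obstacle is bookkeeping rather than ideas: counting ordered versus unordered pairs exactly, so that the constants $\frac34$ and $3$ match the hypotheses of Lemma~\ref{2026}, and checking the sign and collection of cubic terms in Step 3. The place where genuine care is needed is the vanishing $\sum_j h_{jjii}=0$, which is what lets the symmetrized $a_{ij}$ reproduce $\sum\lambda\Delta h$; flatness enters both there, through \eqref{matrix}, and through the vanishing of the normal-curvature term in the Ricci commutation.
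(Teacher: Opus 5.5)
Your proposal is correct and is essentially the paper's argument. Your $a^\alpha_{ij}=\tfrac12(h^\alpha_{iijj}+h^\alpha_{jjii})$ coincides with the paper's fully symmetrized component $u^\alpha_{iijj}$, and Step 2 is exactly the paper's application of Lemma~\ref{2026} with $\mathtt b_i^\alpha=\lambda_i^\alpha$ and $\mathcal C=nS-|\mathcal A|^2$.

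The only difference is the order of operations. The paper first proves the global identity $|\nabla^2h|^2=|u|^2+\tfrac38\sum(h^\alpha_{ijkl}-h^\alpha_{ijlk})^2=|u|^2+\tfrac34F$ and then discards components of $u$. You discard components first and apply $x^2+y^2=\tfrac12(x-y)^2+\tfrac12(x+y)^2$ on each $\{i,i,j,j\}$ block. Both routes give the same lower bound.

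One small correction in Step 3: the mixed terms $\sum_{i,j}(\lambda_i^\alpha)^2\lambda_i^\beta\lambda_j^\beta$ do not "partly cancel"; they vanish outright by minimality.
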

\begin{proof}
	The case $S=0$ is immediate, since then $h\equiv0$. Hence we assume $S>0$.
	For each $\alpha$, define
	\begin{equation}\label{uijkl} u_{ijkl}^{\alpha}=\dfrac{1}{4}(h_{ijkl}^{\alpha}+h_{jkli}^{\alpha}+h_{klij}^{\alpha}+h_{lijk}^{\alpha}).
	\end{equation}
	Squaring \eqref{uijkl} and summing over all indices gives
	\[\sum_{i,j,k,l,\alpha}(u_{ijkl}^{\alpha})^2=\frac14\sum_{i,j,k,l,\alpha}(h_{ijkl}^{\alpha})^2+
	\frac34\sum_{i,j,k,l,\alpha}h^{\alpha}_{ijkl}h_{ijlk}^{\alpha}.
	\]
	On the other hand,
	\[
	\sum_{i,j,k,l,\alpha}(h_{ijkl}^{\alpha}-h_{ijlk}^{\alpha})^2
	=2\sum_{i,j,k,l,\alpha}(h_{ijkl}^{\alpha})^2-2\sum_{i,j,k,l,\alpha}h_{ijkl}^{\alpha}h_{ijlk}^{\alpha}.
	\]
	Consequently,
	\begin{equation}\label{yuanshi}
	\sum_{i,j,k,l,\alpha}(h_{ijkl}^{\alpha})^2-\sum_{i,j,k,l,\alpha}(u_{ijkl}^{\alpha})^2
	=
	\frac38
	\sum_{i,j,k,l,\alpha}(h_{ijkl}^{\alpha}-h_{ijlk}^{\alpha})^2.
	\end{equation}
	Because the normal bundle of $M$ is flat, the Gauss equation and the Ricci formula give 
	\[h_{ijkl}^{\alpha}-h_{ijlk}^{\alpha}=(\delta_{ik}\delta_{jl}-\delta_{il}\delta_{jk})(\lambda_{i}^{\alpha}-\lambda_{j}^{\alpha})(1+\sum_{\beta}\lambda_{i}^{\beta}\lambda_{j}^{\beta})\]
	and
	\begin{equation*}
		h_{ijij}^{\alpha}-h_{jjii}^{\alpha}=(\lambda_{i}^{\alpha}-\lambda_{j}^{\alpha})(1+\sum_{\beta}\lambda_{i}^{\beta}\lambda_{j}^{\beta})=(\lambda_{i}^{\alpha}-\lambda_{j}^{\alpha})K_{ij},
	\end{equation*}
	where $K_{ij}$ is the sectional curvature of the $2$-plane $e_{i}\wedge e_{j}$.
	Thus \eqref{yuanshi} becomes
	\begin{equation*}
		\sum_{i,j,k,l,\alpha}(h_{ijkl}^{\alpha})^2-\sum_{i,j,k,l,\alpha}(u_{ijkl}^{\alpha})^2=\frac{3}{4}\sum_{i,j,\alpha}(h_{ijij}^{\alpha}-h_{jjii}^{\alpha})^2.
	\end{equation*}
 Since the right-hand side equals the difference of two squared norms of
 globally defined tensors, $\sum\limits_{i,j,\alpha}(h_{ijij}^{\alpha}-h_{jjii}^{\alpha})^2$ is independent of the
 choice of frames.
Since
\begin{equation*}
	\begin{aligned}
		\Delta h_{ij}^{\alpha}=&nh_{ij}^{\alpha}-\sum_{\beta}h_{ij}^{\beta}\langle A^{\beta},A^{\alpha}\rangle,
	\end{aligned}
\end{equation*}
we obtain
\begin{equation*}
	\begin{aligned}	
		\sum_{i,j,\alpha}u_{iijj}^{\alpha}\lambda_{i}^{\alpha}=\frac{1}{2}\sum_{\alpha}\Big(n\|A^{\alpha}\|^2-\sum_{\beta}\langle A^{\alpha},A^{\beta}\rangle^2\Big)=\frac{1}{2}(nS-|\mathcal{A}|^2).
	\end{aligned}
\end{equation*}
Hence, Lemma~\ref{2026} gives
\begin{equation*}
	\begin{aligned}	
		\sum_{i,j,k,l,\alpha}(u_{ijkl}^{\alpha})^2\geqslant&\sum_{i,\alpha}(u_{iiii}^{\alpha})^2+3\sum_{\alpha}\sum_{i\neq j}(u_{iijj}^{\alpha})^2\\
		\geqslant&\frac{3\left(nS-|\mathcal{A}|^2\right)^2}{2(n+4)S}.
	\end{aligned}
\end{equation*}
Consequently,
	\begin{equation*}
		\begin{aligned}
			\sum_{i,j,k,l,\alpha}(h_{ijkl}^{\alpha})^2\geqslant&\frac{3}{4}\sum_{i,j,\alpha}(\lambda_{i}^{\alpha}-\lambda_{j}^{\alpha})^2(1+\sum_{\beta}\lambda_{i}^{\beta}\lambda_{j}^{\beta})^2+\frac{3\left(nS-|\mathcal{A}|^2\right)^2}{2(n+4)S}\\
			=&\frac{3}{2}\Big(\sum_{i,\alpha,\beta,\gamma}\lambda_{i}^{\alpha}(\lambda_{i}^{\beta})^2\lambda_{i}^{\gamma}\langle A^{\alpha},A^{\gamma}\rangle-\sum_{\alpha,\beta,\gamma}(\sum_{i}\lambda_{i}^{\alpha}\lambda_{i}^{\beta}\lambda_{i}^{\gamma})^2-2|\mathcal{A}|^2+nS\Big)\\
			&+\frac{3\left(nS-|\mathcal{A}|^2\right)^2}{2(n+4)S}.
		\end{aligned}
	\end{equation*}
\end{proof}

\section{Proof of Theorem~\ref{main} in codimension \texorpdfstring{$2$}{2}}\label{4}
In this section, we consider the case of codimension $m=2$.
\begin{prop}
	Let $M^n$ be a closed minimal submanifold in the unit sphere $\mathbb{S}^{n+2}$ with flat normal bundle. Then we have
	\begin{equation}\label{oriS}
		\int_{M}|\nabla h|^2dM=\int_{M}\big(-nS+|\mathcal{A}|^2\big)dM=\int_{M}\big(S(S-n)-2\det\mathcal{A}\big)dM.
	\end{equation}
\end{prop}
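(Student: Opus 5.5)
Integrate the Simons formula \eqref{Simons} over $M$ and then rewrite $|\mathcal{A}|^2$ in codimension two. Since $M$ is closed, $\int_M \Delta S\, dM = 0$ by the divergence theorem. Integrating \eqref{Simons} therefore gives
\[
0=\int_M\Big(|\nabla h|^2+nS-\sum_{\alpha,\beta}|[A^\alpha,A^\beta]|^2-\sum_{\alpha,\beta}\langle A^\alpha,A^\beta\rangle^2\Big)dM .
\]
Flatness of the normal bundle means $R_{\alpha\beta kl}=0$, which is exactly the statement that all commutators $[A^\alpha,A^\beta]$ vanish. So the commutator term drops out. The last sum is by definition $|\mathcal{A}|^2$, the squared Frobenius norm of the fundamental matrix. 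This yields the first equality $\int_M|\nabla h|^2dM=\int_M(-nS+|\mathcal{A}|^2)\,dM$. No constancy of $S$ is needed here, in contrast to \eqref{Sconstant-general}, because we only integrate.

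For the second equality we use a purely pointwise algebraic identity for the $2\times 2$ symmetric matrix $\mathcal{A}$. Its eigenvalues are $a_1,a_2$, with $a_1+a_2=\operatorname{tr}\mathcal{A}=\sum_\alpha\|A^\alpha\|^2=S$ and $a_1a_2=\det\mathcal{A}$. Hence
\[
|\mathcal{A}|^2=a_1^2+a_2^2=(a_1+a_2)^2-2a_1a_2=S^2-2\det\mathcal{A}.
\]
This is just $\sigma_2=\det\mathcal{A}$ for $m=2$, and it is frame independent. Substituting it into the integrand gives $-nS+|\mathcal{A}|^2=S(S-n)-2\det\mathcal{A}$ pointwise, and integrating completes \eqref{oriS}.

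There is no genuine obstacle. The only point to check is that $\Delta S$ integrates to zero and that flatness kills the commutator term globally, not merely at the point where the frame \eqref{eq:diag} is chosen. This holds because $\sum_{\alpha,\beta}|[A^\alpha,A^\beta]|^2$ is a frame-independent function that vanishes wherever the normal curvature vanishes.
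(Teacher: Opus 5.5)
Your proposal is correct and follows essentially the same route as the paper. In both, flatness reduces the Simons formula to $\frac12\Delta S=|\nabla h|^2+nS-|\mathcal{A}|^2$, the pointwise identity $|\mathcal{A}|^2=S^2-2\det\mathcal{A}$ is substituted, and integration over the closed manifold finishes. The only cosmetic difference is that you obtain the identity from the eigenvalues $a_1,a_2$, whereas the paper expands the entries $\|A^{n+1}\|^2$, $\|A^{n+2}\|^2$, $\langle A^{n+1},A^{n+2}\rangle$ of $\mathcal{A}$ directly.
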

\begin{proof}
	Since the normal bundle of $M$ is flat, \eqref{Simons} becomes
		\begin{equation}\label{1}
			\begin{aligned}
				\frac{1}{2}\Delta S=|\nabla h|^2+nS-|\mathcal{A}|^2.
			\end{aligned}
		\end{equation}
		A direct expansion gives
		\begin{equation}\label{relation}
			\begin{aligned}
				|\mathcal{A}|^2=&\|A^{n+1}\|^4+\|A^{n+2}\|^4+2|\langle A^{n+1},A^{n+2}\rangle|^2\\
				=&S^2-(2\|A^{n+1}\|^2\|A^{n+2}\|^2-2|\langle A^{n+1},A^{n+2}\rangle|^2)\\
				=&S^2-2\det\mathcal{A}.
			\end{aligned}
		\end{equation}
	 Substituting \eqref{relation} into \eqref{1} gives
		\begin{equation}\label{1p}
			\begin{aligned}
				\frac{1}{2}\Delta S=|\nabla h|^2+nS-S^2+2\det\mathcal{A}.
			\end{aligned}
		\end{equation}
	Integrating \eqref{1} and \eqref{1p} over $M$ yields \eqref{oriS}.
\end{proof}
\begin{cor}
	Let $M^n$ be a closed minimal submanifold in the unit sphere $\mathbb{S}^{n+2}$ with flat normal bundle. If $S$ is constant, then we have
	\begin{equation}\label{SimonsS}
		|\nabla h|^2=-nS+|\mathcal{A}|^2=S(S-n)-2\det\mathcal{A}.
	\end{equation}
\end{cor}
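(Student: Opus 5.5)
This is an immediate consequence of the preceding proposition; no new ingredient is needed. We take the pointwise identity \eqref{1p}, which holds for any closed minimal submanifold of $\mathbb{S}^{n+2}$ with flat normal bundle, together with \eqref{1} and the algebraic identity \eqref{relation}.

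\textbf{Step 1.} If $S$ is constant, then $\Delta S\equiv 0$ on $M$.

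\textbf{Step 2.} Equation \eqref{1} then gives, pointwise,
\[
|\nabla h|^2=|\mathcal{A}|^2-nS .
\]

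\textbf{Step 3.} Substituting \eqref{relation}, $|\mathcal{A}|^2=S^2-2\det\mathcal{A}$, gives
\[
|\nabla h|^2=S(S-n)-2\det\mathcal{A},
\]
which is \eqref{SimonsS}. Equivalently, one can read this off directly from \eqref{1p} with $\Delta S=0$.

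\textbf{Remark on \eqref{relation}.} This identity holds for an arbitrary orthonormal normal frame $\{e_{n+1},e_{n+2}\}$. Indeed, $\mathcal{A}$ is a symmetric $2\times 2$ matrix, so $|\mathcal{A}|^2=(\operatorname{tr}\mathcal{A})^2-2\det\mathcal{A}$ and $\operatorname{tr}\mathcal{A}=S$. Both sides of \eqref{SimonsS} are therefore frame-independent scalar functions, and the pointwise identity holds globally.

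\textbf{Consistency with the general case.} As a check, \eqref{SimonsS} matches \eqref{Sconstant-general} with $m=2$, since then $\sigma_2=a_1a_2=\det\mathcal{A}$.

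There is no real obstacle. The only point needing care is that the commutator term $\sum_{\alpha,\beta}|[A^\alpha,A^\beta]|^2$ in \eqref{Simons} vanishes by flatness of the normal bundle, and this was already used in deriving \eqref{1}.
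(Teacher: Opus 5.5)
Your proposal is correct and is exactly the argument the paper intends: since $S$ is constant, $\Delta S\equiv 0$, and setting $\Delta S=0$ in \eqref{1} and in \eqref{1p} (which already use flatness to kill the commutator term and use \eqref{relation}) gives \eqref{SimonsS} pointwise. Your remark that \eqref{relation} holds in any orthonormal normal frame, because $|\mathcal{A}|^2=(\operatorname{tr}\mathcal{A})^2-2\det\mathcal{A}$ for a symmetric $2\times 2$ matrix with $\operatorname{tr}\mathcal{A}=S$, is correct and slightly cleaner than the paper's direct expansion.
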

Using refined estimates, we give an upper bound for $\int_{M}|\nabla^2 h|^2dM$.
\begin{thm}\label{upperco2}
	Let $M^n$ be a closed minimal submanifold in the unit sphere $\mathbb{S}^{n+2}$ with flat normal bundle. If $S$ is constant and $n\leqslant S\leqslant n+\delta\leqslant2n$ for some $\delta>0$,
	then
	\begin{equation}\label{newupper}
	\begin{aligned}
		\int_M |\nabla^2h|^2dM
		\leqslant \int_M\bigg\lbrace&\Big(S-2n-3+10\epsilon+
		\frac{2\epsilon^2}{n+\delta}\Big)|\nabla h|^2+3\sum_{\alpha,\beta}(A_{\alpha,\beta}-2B_{\alpha,\beta})\bigg\rbrace dM,
	\end{aligned}
\end{equation}
	where $\epsilon$ is defined in \eqref{eq:b-eps}.
\end{thm}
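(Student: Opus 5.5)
The plan is to integrate the identity \eqref{eq:simons-gradient} over $M$. Since $S$ is constant, \eqref{SimonsS} shows that $|\nabla h|^2=S(S-n)-2\det\mathcal{A}$, and the left-hand side $\frac12\Delta(|\nabla h|^2)$ integrates to zero. This gives
\[
\int_M|\nabla^2h|^2dM=\int_M\Big\{-(2n+3)|\nabla h|^2+3\sum_{\alpha,\beta}(A_{\alpha,\beta}-2B_{\alpha,\beta})+6\sum_{k,\alpha,\beta}\langle A^\alpha,\nabla_{e_k}A^\beta\rangle^2+\sum_{\alpha,\beta}\langle A^\alpha,A^\beta\rangle\sum_k\langle\nabla_{e_k}A^\alpha,\nabla_{e_k}A^\beta\rangle\Big\}dM .
\]
The claim is then equivalent to bounding the last two terms pointwise (or after integration) by $\big(S+10\epsilon+\frac{2\epsilon^2}{n+\delta}\big)|\nabla h|^2$. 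Everything is computed in the frame \eqref{eq:diag}, so that $a_1=\|A^{n+1}\|^2$, $a_2=\|A^{n+2}\|^2=b\leqslant\epsilon$, and $\langle A^{n+1},A^{n+2}\rangle=0$.

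The fourth-order term is easy. In this frame it equals $a_1|\nabla h^{n+1}|^2+a_2|\nabla h^{n+2}|^2\leqslant S|\nabla h|^2$. I would keep it in the sharper form $a_1|\nabla h^{n+1}|^2+b|\nabla h^{n+2}|^2$, because the remaining deficit may be needed to absorb part of the next term.

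The main obstacle is the term $6\sum_{k,\alpha,\beta}\langle A^\alpha,\nabla_{e_k}A^\beta\rangle^2$. Differentiating the constant quantity $S=a_1+a_2$ gives $\langle A^{n+1},\nabla_k A^{n+1}\rangle+\langle A^{n+2},\nabla_k A^{n+2}\rangle=0$. Lemma~\ref{fine} gives the symmetry $\langle A^{n+1},\nabla_kA^{n+2}\rangle=\langle A^{n+2},\nabla_kA^{n+1}\rangle$. Write $x_k:=\langle A^{n+1},\nabla_kA^{n+1}\rangle=-\langle A^{n+2},\nabla_kA^{n+2}\rangle$ and $y_k:=\langle A^{n+1},\nabla_kA^{n+2}\rangle$; then the term equals $6\sum_k(2x_k^2+2y_k^2)$. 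Each of these is an inner product involving $A^{n+2}$, so Cauchy--Schwarz gives $x_k^2\leqslant b|\nabla_kA^{n+2}|^2$ and $y_k^2\leqslant b|\nabla_kA^{n+1}|^2$. This produces a bound of order $12b|\nabla h|^2\leqslant12\epsilon|\nabla h|^2$.

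That crude bound is too large, so I would refine the constants. First, I would use \eqref{use} (trace-free) in the form $\sum_i\lambda_i^{\alpha}h^{\beta}_{iik}$ with trace-zero vectors, which gains the factor $\frac{n-1}{n}$. Second, I would exploit the fact that differentiating $\langle A^{n+1},A^{n+2}\rangle=0$ gives $y_k+\langle A^{n+2},\nabla_kA^{n+1}\rangle=0$. Combined with the symmetry from Lemma~\ref{fine}, this forces $y_k=0$, which removes the $y$ contribution entirely. What remains is $12\sum_kx_k^2$. I would bound it by splitting $x_k^2=x_k\cdot x_k$, using $|x_k|^2\leqslant a_1|\nabla_kA^{n+1}|^2$ on one factor and $\leqslant b|\nabla_kA^{n+2}|^2$ on the other, and then distributing the result between $|\nabla h^{n+1}|^2$ and $|\nabla h^{n+2}|^2$. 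The extra saving comes from the slack $(S-a_1)=b$ in the fourth-order term applied to $|\nabla h^{n+1}|^2$.

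The bookkeeping is arranged so that the total coefficient is at most $S+10\epsilon+\frac{2\epsilon^2}{n+\delta}$. The $\frac{2\epsilon^2}{n+\delta}$ correction should arise from comparing $b/a_1$ with $\epsilon/(n+\delta)$, using $a_1\geqslant n$ and the monotonicity established in the proof of \eqref{eq:b-eps}. Tuning this weighted Cauchy--Schwarz to land exactly on the coefficient $10\epsilon$ is the step I expect to require the most care. If the direct split falls short, I would instead integrate $\sum_k x_k^2=\frac14|\nabla a_1|^2$ and use integration by parts to trade it against $\Delta a_1$.
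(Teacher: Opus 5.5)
\noindent The step your plan hinges on is false: you cannot get $y_k=0$ by differentiating $\langle A^{n+1},A^{n+2}\rangle=0$. The frame \eqref{eq:diag} is a parallel normal frame rotated by a constant rotation, so it makes $\mathcal{A}$ diagonal only at the single point $q$. Hence $\langle A^{n+1},A^{n+2}\rangle$ does not vanish near $q$. At $q$, Lemma~\ref{fine} gives $e_k\langle A^{n+1},A^{n+2}\rangle=2y_k$, and nothing forces this to be zero. This differs from the relation $x_k=-\langle A^{n+2},\nabla_{e_k}A^{n+2}\rangle$, which you may use because $S$ is constant on all of $M$. If you instead diagonalize $\mathcal{A}$ on a whole neighbourhood, the normal frame is no longer parallel. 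The same computation then gives $2y_k=(a_1-a_2)\,\omega_{n+1,n+2}(e_k)$ (up to the sign convention for $\omega$), again not $y_k=0$. The paper keeps this quantity as a genuine term: it is the $|\nabla\langle A^{n+1},A^{n+2}\rangle|^2=4\sum_k y_k^2$ appearing in \eqref{xldr}.

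In fact the $y$-terms are exactly the source of the correction in the statement. One has $\sum_k x_k^2\leqslant\min\{a_2|\nabla h^{n+2}|^2,\,a_1|\nabla h^{n+1}|^2\}$, and the fourth-order term leaves the slack $S|\nabla h|^2-(a_1|\nabla h^{n+1}|^2+a_2|\nabla h^{n+2}|^2)=a_2|\nabla h^{n+1}|^2+a_1|\nabla h^{n+2}|^2$. Together these absorb $12\sum_k x_k^2$ completely whenever $a_1\geqslant 12a_2$, which is the regime relevant to Theorem~\ref{main}. So with $y_k\equiv0$ you would obtain the bound with no $10\epsilon+\frac{2\epsilon^2}{n+\delta}$ at all. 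Your remaining bookkeeping is therefore aimed at the wrong term. Moreover, the claimed ``$\frac{n-1}{n}$ gain'' is not available. Cauchy--Schwarz between two trace-free vectors is already sharp on the hyperplane $\sum_i z_i=0$ (take the two vectors equal). Also, \eqref{use} bounds single $\lambda_i^\alpha$, which do not enter that estimate.

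\noindent The missing idea is to keep $y_k$ and control it, like $x_k$, through both of its representations. From $y_k=\langle A^{n+1},\nabla_{e_k}A^{n+2}\rangle=\langle A^{n+2},\nabla_{e_k}A^{n+1}\rangle$ you get $\sum_k y_k^2\leqslant\min\{a_2|\nabla h^{n+1}|^2,\,a_1|\nabla h^{n+2}|^2\}$. Then you must spend the \emph{whole} slack $a_2|\nabla h^{n+1}|^2+a_1|\nabla h^{n+2}|^2$ on $12\sum_k x_k^2+12\sum_k y_k^2$ simultaneously. You only singled out the $b|\nabla h^{n+1}|^2$ part; the $a_1|\nabla h^{n+2}|^2$ part is essential. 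Set $w=a_1/a_2\geqslant1$ and $t=|\nabla h^{n+2}|^2/|\nabla h^{n+1}|^2$. The required estimate is the elementary inequality
\[
12\min\{1,wt\}+12\min\{t,w\}\leqslant wt+1+\Bigl(10+\frac{2}{w+1}\Bigr)(1+t).
\]
It is checked separately on $t\leqslant 1/w$, on $1/w\leqslant t\leqslant w$, and on $t\geqslant w$; on each range the difference is affine in $t$ and nonnegative. It yields the pointwise coefficient $S+10a_2+\frac{2a_2^2}{S}$. So the quadratic term comes from $\frac{2a_2}{w+1}=\frac{2a_2^2}{S}$, not from comparing $b/a_1$ with $\epsilon/(n+\delta)$. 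Finally, monotonicity in $a_2\in\bigl[0,\frac{S-\sqrt{S(2n-S)}}{2}\bigr]$ (from \eqref{eq:b-eps}), and then in $S\in[n,n+\delta]$, converts this coefficient into $10\epsilon+\frac{2\epsilon^2}{n+\delta}$.
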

\begin{proof}
	The right-hand side of \eqref{eq:simons-gradient} is independent of the choice of frames. At the fixed point $q$, since $\langle A^{n+1},A^{n+2}\rangle=0$, we have $a_{r}=\|A^{n+r}\|^2$, $r=1, 2$. Then \eqref{eq:simons-gradient} becomes
	\begin{equation}\label{2h}
		\begin{aligned}
			\frac{1}{2}\Delta(|\nabla h|^2)=&(2n+3-S)|\nabla h|^2-3\sum_{\alpha,\beta}(A_{\alpha,\beta}-2B_{\alpha,\beta})+a_{1}|\nabla h^{n+2}|^2\\&+a_{2}|\nabla h^{n+1}|^2-6\sum_{k}(\sum_{i}\lambda_{i}^{n+1}h_{iik}^{n+2})^2-6\sum_{k}(\sum_{i}\lambda_{i}^{n+2}h_{iik}^{n+1})^2\\
			&-6\sum_{k}(\sum_{i}\lambda_{i}^{n+1}h_{iik}^{n+1})^2-6\sum_{k}(\sum_{i}\lambda_{i}^{n+2}h_{iik}^{n+2})^2+|\nabla^2 h|^2.
		\end{aligned}
	\end{equation}
	Since $S$ is constant, 
	\begin{equation}\label{S_k}
		\frac{1}{2}S_{k}=\sum_{i}\lambda_{i}^{n+1}h_{iik}^{n+1}+\sum_{i}\lambda_{i}^{n+2}h_{iik}^{n+2}=0.
	\end{equation}
	From Lemma~\ref{fine} and \eqref{S_k}, \eqref{2h} becomes
	\begin{equation}\label{www}
	\begin{aligned}
		\frac{1}{2}\Delta(|\nabla h|^2)
		={}&(2n+3-S)|\nabla h|^2
		-3\sum_{\alpha,\beta}(A_{\alpha,\beta}-2B_{\alpha,\beta})+|\nabla^2 h|^2\\
		&+a_{1}|\nabla h^{n+2}|^2+a_{2}|\nabla h^{n+1}|^2
		-12\sum_{k}\Big(\sum_{i}\lambda_{i}^{n+2}h_{iik}^{n+1}\Big)^2\\
		&-12\sum_{k}\Big(\sum_{i}\lambda_{i}^{n+2}h_{iik}^{n+2}\Big)^2.
	\end{aligned}
\end{equation}
	By the Cauchy inequality, Lemma~\ref{fine} and \eqref{S_k}, we have
	\[
	\sum_{k}(\sum_{i}\lambda_{i}^{n+2}h_{iik}^{n+1})^2\leqslant \min\{a_2|\nabla h^{n+1}|^2,a_1|\nabla h^{n+2}|^2\}\]
	and
	\[
	\sum_{k}(\sum_{i}\lambda_{i}^{n+2}h_{iik}^{n+2})^2\leqslant \min\{a_2|\nabla h^{n+2}|^2,a_1|\nabla h^{n+1}|^2\}.
	\]
	If $a_2=0$, the desired estimate \eqref{newupper} follows immediately.
	Assume $a_2>0$ at this fixed point $q$ and put
	$w=\frac{a_{1}}{a_{2}}\geqslant1$. We claim that
	\begin{equation}\label{min-ineq}
		\begin{aligned}
			&12\min\{|\nabla h^{n+1}|^2,w|\nabla h^{n+2}|^2\}+12\min\{|\nabla h^{n+2}|^2,w|\nabla h^{n+1}|^2\}\\
			\leqslant&w|\nabla h^{n+2}|^2+|\nabla h^{n+1}|^2+\left(10+\frac{2}{w+1}\right)(|\nabla h^{n+1}|^2+|\nabla h^{n+2}|^2).
		\end{aligned}
	\end{equation}
	When $|\nabla h^{n+1}|^2>0$, put $t=\frac{|\nabla h^{n+2}|^2}{|\nabla h^{n+1}|^2}$ and
	$C=10+\frac{2}{w+1}$.  After division by $|\nabla h^{n+1}|^2$, the right-hand side minus the
	left-hand side of \eqref{min-ineq} is
	\[
	\begin{cases}
		\dfrac{11w+13}{w+1}(1-wt),&0\leqslant t\leqslant\frac{1}{w},\\[2mm]
		\dfrac{w-1}{w+1}(wt-1),&\frac{1}{w}\leqslant t\leqslant w,\\[2mm]
		(w+C)(t-w)+(w-1)^2,&t\geqslant w.
	\end{cases}
	\]
	Each expression is nonnegative.  The case $|\nabla h^{n+1}|^2=0$ follows directly, proving \eqref{min-ineq}. Therefore
	\begin{align}
			&a_{1}|\nabla h^{n+2}|^2+a_{2}|\nabla h^{n+1}|^2-12\sum_{k}(\sum_{i}\lambda_{i}^{n+2}h_{iik}^{n+1})^2-12\sum_{k}(\sum_{i}\lambda_{i}^{n+2}h_{iik}^{n+2})^2\label{ria}\\
			\geqslant&a_{1}|\nabla h^{n+2}|^2+a_{2}|\nabla h^{n+1}|^2-12\min\{a_2|\nabla h^{n+1}|^2,a_1|\nabla h^{n+2}|^2\}\notag\\
			&-12\min\{a_2|\nabla h^{n+2}|^2,a_1|\nabla h^{n+1}|^2\}\notag\\
			=&a_2(w|\nabla h^{n+2}|^2+|\nabla h^{n+1}|^2)-12a_2\min\{|\nabla h^{n+1}|^2,w|\nabla h^{n+2}|^2\}\notag\\
			&-12a_2\min\{|\nabla h^{n+2}|^2,w|\nabla h^{n+1}|^2\}\notag\\
			\geqslant&-\Big(10a_2+\frac{2a_2}{w+1}\Big)(|\nabla h^{n+1}|^2+|\nabla h^{n+2}|^2)\notag\\
			=&-\Big(10a_{2}+\frac{2a_{2}^2}{S}\Big)|\nabla h|^2.\notag
\end{align}
	Thus the pointwise inequality \eqref{ria} holds in all cases. This together with \eqref{www} yields
	\begin{equation}\label{12e}
		\frac{1}{2}\Delta(|\nabla h|^2)\geqslant\left(2n+3-S-\Big(10a_{2}+\frac{2a_{2}^2}{S}\Big)\right)|\nabla h|^2-3\sum_{\alpha,\beta}(A_{\alpha,\beta}-2B_{\alpha,\beta})+|\nabla^2 h|^2.
	\end{equation}
	Furthermore, by \eqref{eq:b-eps}, we have
	\begin{equation*}
		0\leqslant a_{2}\leqslant\frac{S}{2}-\frac{1}{2}\sqrt{S(2n-S)}=:g(S).
	\end{equation*}
	For fixed $S$, the function $a_{2}\mapsto10a_{2}+\frac{2a_{2}^2}{S}$ is increasing on
	$[0,g(S)]$, so 
	\[
	10a_2+\frac{2a_2^2}{S}
	\leqslant 10g(S)+\frac{2g(S)^2}{S}.
	\]
	Write
	\[
	\phi(S):=\frac{g(S)}{S}
	=\frac{1-\sqrt{\frac{2n}{S}-1}}2.
	\]
	Both $S$ and $\phi(S)$ are increasing on $[n,n+\delta]$, and
	$10\phi+2\phi^2$ is increasing for $\phi\geqslant0$.  Hence
	$S(10\phi(S)+2\phi(S)^2)$ is increasing and
	\[10a_2+\frac{2a_2^2}{S}\leqslant10\epsilon+\frac{2\epsilon^2}{n+\delta}.\]
	Substituting this into \eqref{12e} and integrating over $M$ yields \eqref{newupper}.
\end{proof}
Next, we specialize Lemma~\ref{AAA} to codimension two.
\begin{lem}
	Let $M^n$ be a closed minimal submanifold in the unit sphere $\mathbb{S}^{n+2}$ with flat normal bundle. Then
	\begin{align}
		&\int_{M}3\sum_{\alpha,\beta}(A_{\alpha,\beta}-2B_{\alpha,\beta})dM\label{A-2B}\\
		=\int_{M}&\bigg\lbrace3\Big(-\frac{1}{4}|\nabla S|^2
		+\sum_{i,\alpha,\beta,\gamma}\lambda_{i}^{\alpha}(\lambda_{i}^{\beta})^2\lambda_{i}^{\gamma}
		\langle A^{\alpha},A^{\gamma}\rangle\notag-|\mathcal{A}|^2-\sum_{\alpha,\beta,\gamma}\big(\sum_{i}\lambda_{i}^{\alpha}\lambda_{i}^{\beta}\lambda_{i}^{\gamma}\big)^2\Big)\\&+2\sum_{\alpha\neq\beta}(A_{\alpha,\beta}-2B_{\alpha,\beta})
		+\sum_{i,j,k}\Big(8\lambda_{i}^{n+1}\lambda_{j}^{n+2}h_{ijk}^{n+1}h_{ijk}^{n+2}
		-4\lambda_{i}^{n+1}\lambda_{i}^{n+2}h_{ijk}^{n+1}h_{ijk}^{n+2}\Big)\notag\\
		&+|\mathcal{A}|^2-S^2
		+2\Big(\sum_{i}(\lambda_{i}^{n+1})^2\lambda_{i}^{n+2}\Big)^2
		+2\Big(\sum_{i}(\lambda_{i}^{n+2})^2\lambda_{i}^{n+1}\Big)^2\notag\\
		&-2\sum_{k}(\lambda_{k}^{n+1})^3\sum_{i}(\lambda_{i}^{n+2})^2\lambda_{i}^{n+1}
		-2\sum_{k}(\lambda_{k}^{n+2})^3\sum_{i}(\lambda_{i}^{n+1})^2\lambda_{i}^{n+2}\notag\\
		&+4\sum_{i,j,k}\lambda_{i}^{n+1}\lambda_{j}^{n+2}h_{iik}^{n+1}h_{jjk}^{n+2}
		-\sum_{i,j,k}\lambda_{i}^{n+1}\lambda_{j}^{n+1}h_{iik}^{n+2}h_{jjk}^{n+2}\notag\\
		&-\sum_{i,j,k}\lambda_{i}^{n+2}\lambda_{j}^{n+2}h_{iik}^{n+1}h_{jjk}^{n+1}
		-2\sum_{i,j,k}\lambda_{i}^{n+1}\lambda_{j}^{n+2}h_{jjk}^{n+1}h_{iik}^{n+2}\bigg\rbrace dM.\notag
\end{align}
\end{lem}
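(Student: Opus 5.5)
We plan to derive the identity by specializing Lemma~\ref{AAA} to $m=2$, multiplying it by $3$, and then rearranging pointwise in the adapted frame \eqref{eq:diag}. This frame is chosen at each point, so $\langle A^{n+1},A^{n+2}\rangle=0$, $a_r=\|A^{n+r}\|^2$, and $h_{ij}^\alpha=\lambda_i^\alpha\delta_{ij}$. Every sum over normal indices $\alpha,\beta,\gamma$ then runs over $\{n+1,n+2\}$ only, and all terms containing $\langle A^\alpha,A^\gamma\rangle$ with $\alpha\ne\gamma$ vanish. Since both sides of the claimed identity are integrals of frame-independent quantities, or differ from such by exact divergences already accounted for in Lemma~\ref{AAA}, it suffices to compare integrands pointwise in this frame. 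The exceptions are terms we deliberately re-express through an integration by parts.

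First we split $3\sum_{\alpha,\beta}$ of the curvature-only terms in Lemma~\ref{AAA}. The block $3\big(-\tfrac14|\nabla S|^2+\sum\lambda_i^\alpha(\lambda_i^\beta)^2\lambda_i^\gamma\langle A^\alpha,A^\gamma\rangle-|\mathcal A|^2-\sum(\sum_i\lambda_i^\alpha\lambda_i^\beta\lambda_i^\gamma)^2\big)$ appears verbatim. This is essentially $\tfrac32F$ up to the $nS$ correction of Lemma~\ref{xj}, and we keep it intact. The remaining cubic and quartic terms, namely $-S^2$, the term $-\sum(\lambda_k^\alpha)^2\lambda_k^\gamma(\lambda_i^\beta)^2\lambda_i^\gamma$, the extra $\sum\lambda^\alpha(\lambda^\beta)^2\lambda^\gamma\langle\cdot,\cdot\rangle$ copies, and the leftover $|\mathcal A|^2$ and $(\sum\lambda\lambda\lambda)^2$ pieces, we expand explicitly over $\alpha,\beta,\gamma\in\{n+1,n+2\}$. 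The diagonal terms with $\alpha=\beta=\gamma$ should cancel against each other, exactly as in the hypersurface case of Peng--Terng. The mixed terms should assemble into $|\mathcal A|^2-S^2$, $2(\sum_i(\lambda_i^{n+1})^2\lambda_i^{n+2})^2+2(\sum_i(\lambda_i^{n+2})^2\lambda_i^{n+1})^2$, and the two products $-2\sum_k(\lambda_k^{\alpha})^3\sum_i(\lambda_i^{\beta})^2\lambda_i^{\alpha}$.

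Second we treat the derivative terms. Multiplying by $3$ we get $3\big(4\lambda_k^\alpha\lambda_i^\beta h^\alpha_{ijk}h^\beta_{ijk}-\lambda_i^\alpha\lambda_j^\alpha h^\beta_{iik}h^\beta_{jjk}-\lambda_i^\alpha\lambda_j^\beta h^\alpha_{jjk}h^\beta_{iik}-2\lambda_i^\alpha\lambda_i^\beta h^\alpha_{ijk}h^\beta_{ijk}\big)$. For the diagonal pairs $\alpha=\beta$, we use $\sum_i\lambda_i^\alpha h^\alpha_{iik}=\tfrac12(\|A^\alpha\|^2)_k$ and the symmetry of $h_{ijk}$ to recognize $3\sum_\alpha$ of these as $-\tfrac34|\nabla S|^2$ plus multiples of $\sum_\alpha(A_{\alpha\alpha}-2B_{\alpha\alpha})$. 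This produces the left side's diagonal part. Here $\sum_\alpha|\nabla\|A^\alpha\|^2|^2$ splits via $\nabla S$ and the cross term, and the cross term is rewritten with Lemma~\ref{fine}. For the off-diagonal pairs, we write $A_{\alpha\beta}-2B_{\alpha\beta}=\sum_{i,j,k}(h^\beta_{ijk})^2\big((\lambda_k^\alpha)^2-2\lambda_i^\alpha\lambda_j^\alpha\big)$ in the adapted frame. We then use the flatness relation \eqref{letandsum}, $(\lambda_i^{\beta}-\lambda_k^{\beta})h_{kil}^{\alpha}=(\lambda_i^{\alpha}-\lambda_k^{\alpha})h_{kil}^{\beta}$, to convert quantities quadratic in $h^{n+1}$ weighted by $\lambda^{n+2}$ into mixed products $h^{n+1}h^{n+2}$. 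This lets us extract $2\sum_{\alpha\ne\beta}(A_{\alpha\beta}-2B_{\alpha\beta})$, leaving the mixed terms $8\lambda_i^{n+1}\lambda_j^{n+2}h^{n+1}_{ijk}h^{n+2}_{ijk}-4\lambda_i^{n+1}\lambda_i^{n+2}h^{n+1}_{ijk}h^{n+2}_{ijk}$ and the four trace-type terms $\sum\lambda\lambda h_{iik}h_{jjk}$ with coefficients $4,-1,-1,-2$.

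The main obstacle is this second step, namely the precise bookkeeping of coefficients when trading $3$ copies of the mixed derivative terms for $2\sum_{\alpha\ne\beta}(A-2B)$ plus remainders. Equation \eqref{letandsum} is only usable where $\lambda_i^\alpha\ne\lambda_k^\alpha$ is irrelevant, since it is an identity and not a division, so we must apply it as a polynomial identity summed over indices. We also must track carefully which sums are over ordered versus distinct indices. We would first verify the identity in the simplest consistency check, where $\lambda^{n+2}\equiv0$ at the point. There the formula must reduce to $3\times$ the Peng--Terng identity for $A_{n+1,n+1}-2B_{n+1,n+1}$. Only then would we match the genuinely mixed coefficients term by term.
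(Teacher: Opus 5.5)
Your frame is the paper's: specialize Lemma~\ref{AAA} to $m=2$ and expand in the frame \eqref{eq:diag}. Your expansion of the curvature-only terms into \eqref{2par} is also right. The gap is in the step you single out as the main obstacle. It rests on a wrong account of where the factor $3$ and the term $2\sum_{\alpha\neq\beta}(A_{\alpha,\beta}-2B_{\alpha,\beta})$ come from, and as planned it cannot work.

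\textbf{Multiplying by $3$.} Multiplying Lemma~\ref{AAA} by $3$ produces $9\sum\lambda_i^\alpha(\lambda_i^\beta)^2\lambda_i^\gamma\langle A^\alpha,A^\gamma\rangle$, $-3S^2$, and three copies of every derivative term. The statement instead contains $3$, $-1$ and a single copy; the coefficients $8,-4$ and $4,-1,-1,-2$ are one copy. The surplus, including the ``extra copies'' you plan to absorb, can only be removed by invoking Lemma~\ref{AAA} a second time, not by pointwise algebra.

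\textbf{Using \eqref{letandsum}.} Nor can \eqref{letandsum} turn the $\alpha\neq\beta$ derivative terms into $2\sum_{\alpha\neq\beta}(A_{\alpha,\beta}-2B_{\alpha,\beta})$ plus the listed remainders. Take $n=3$, $\lambda^{n+1}=(1,-1,0)$, $\lambda^{n+2}=(1,1,-2)$, $h^{n+2}_{ijk}=0$, and let $h^{n+1}_{112}=-h^{n+1}_{222}=1$ be the only nonzero components up to symmetry. Symmetry, tracelessness and \eqref{letandsum} all hold. Every term involving $h^{n+2}$ or $\sum_i\lambda_i^{n+2}h^{n+1}_{iik}$ vanishes, yet $\sum_{\alpha\neq\beta}(A_{\alpha,\beta}-2B_{\alpha,\beta})=-4$.

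\textbf{The $|\nabla S|^2$ count.} With coefficient one, the $\alpha=\beta$ trace-type terms give $-\frac12|\nabla S|^2+4\sum_{i,j,k}\lambda_i^{n+1}\lambda_j^{n+2}h^{n+1}_{iik}h^{n+2}_{jjk}$. The $-\frac34|\nabla S|^2$ of the statement is this plus the $-\frac14|\nabla S|^2$ already in Lemma~\ref{AAA}. The cross term is kept as it stands rather than rewritten by Lemma~\ref{fine}.

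The missing idea is a one-line add-and-subtract, \eqref{1par}. It needs no flatness beyond what Lemma~\ref{AAA} already used. By the symmetry of $h_{ijk}$, the $\alpha=\beta$ part of $\sum(4\lambda_k^\alpha\lambda_i^\beta-2\lambda_i^\alpha\lambda_i^\beta)h^\alpha_{ijk}h^\beta_{ijk}$ equals $-2\sum_\alpha(A_{\alpha,\alpha}-2B_{\alpha,\alpha})=-2\sum_{\alpha,\beta}(A_{\alpha,\beta}-2B_{\alpha,\beta})+2\sum_{\alpha\neq\beta}(A_{\alpha,\beta}-2B_{\alpha,\beta})$. The $\alpha\neq\beta$ part, after merging the two orderings, is literally $\sum_{i,j,k}(8\lambda_i^{n+1}\lambda_j^{n+2}-4\lambda_i^{n+1}\lambda_i^{n+2})h^{n+1}_{ijk}h^{n+2}_{ijk}$. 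Moving $-2\sum_{\alpha,\beta}(A_{\alpha,\beta}-2B_{\alpha,\beta})$ to the left of Lemma~\ref{AAA}, used once, is what creates the factor $3$. The rest is regrouping:
\begin{enumerate}
\item[(i)] write $-\frac14|\nabla S|^2=-\frac34|\nabla S|^2+\frac12|\nabla S|^2$, and split the coefficient $-2$ as $-3+1$ on $|\mathcal A|^2$ and on $\sum(\sum_i\lambda_i^\alpha\lambda_i^\beta\lambda_i^\gamma)^2$, to form the block $3(\cdots)$;
\item[(ii)] apply your expansion \eqref{2par};
\item[(iii)] apply the expansion \eqref{3part} of $\frac12|\nabla S|^2$ minus all trace-type terms, using $\frac12S_k=\sum_{i,\alpha}\lambda_i^\alpha h^\alpha_{iik}$.
\end{enumerate}
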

\begin{proof}
Since $n+1\leqslant\alpha,\beta\leqslant n+2$, a direct expansion yields
\begin{equation}\label{1par}
	\begin{aligned}
		&\sum_{i, j,k,\alpha,\beta}(4\lambda_{k}^{\alpha}\lambda_{i}^{\beta}h_{ijk}^{\alpha}h_{ijk}^{\beta}-2\lambda_{i}^{\alpha}\lambda_{i}^{\beta}h_{ijk}^{\alpha}h_{ijk}^{\beta})\\
		=&-2\sum_{\alpha,\beta}(A_{\alpha,\beta}-2B_{\alpha,\beta})+2\sum_{\alpha\neq\beta}(A_{\alpha,\beta}-2B_{\alpha,\beta})\\
		&+\sum_{i,j,k}\left(8\lambda_{i}^{n+1}\lambda_{j}^{n+2}h_{ijk}^{n+1}h_{ijk}^{n+2}-4\lambda_{i}^{n+1}\lambda_{i}^{n+2}h_{ijk}^{n+1}h_{ijk}^{n+2}\right).
	\end{aligned}
\end{equation}
Substituting \eqref{1par} into \eqref{1stoccur} yields
\begin{align}
		&\int_{M}3\sum_{\alpha,\beta}(A_{\alpha,\beta}-2B_{\alpha,\beta})dM\label{middle}\\
		=\int_{M}&\bigg\lbrace3\Big(-\frac{1}{4}|\nabla S|^2+\sum_{i,\alpha,\beta,\gamma}\lambda_{i}^{\alpha}(\lambda_{i}^{\beta})^2\lambda_{i}^{\gamma}\langle A^{\alpha},A^{\gamma}\rangle-|\mathcal{A}|^2-\sum_{\alpha,\beta,\gamma}(\sum_{i}\lambda_{i}^{\alpha}\lambda_{i}^{\beta}\lambda_{i}^{\gamma})^2\Big)\notag\\
		&+2\sum_{\alpha\neq\beta}(A_{\alpha,\beta}-2B_{\alpha,\beta})+\sum_{i,j,k}\big(8\lambda_{i}^{n+1}\lambda_{j}^{n+2}h_{ijk}^{n+1}h_{ijk}^{n+2}-4\lambda_{i}^{n+1}\lambda_{i}^{n+2}h_{ijk}^{n+1}h_{ijk}^{n+2}\big)\notag\\
		&+|\mathcal{A}|^2-S^2+\sum_{\alpha,\beta,\gamma}\big(\sum_{i}\lambda_{i}^{\alpha}\lambda_{i}^{\beta}\lambda_{i}^{\gamma}\big)^2-\sum_{i,k,\alpha,\beta,\gamma}(\lambda_{k}^{\alpha})^2\lambda_{k}^{\gamma}(\lambda_{i}^{\beta})^2\lambda_{i}^{\gamma}\notag\\
		&+\frac{1}{2}|\nabla S|^2-\sum_{i,j,k,\alpha,\beta}\big(\lambda_{i}^{\alpha}\lambda_{j}^{\alpha}h_{iik}^{\beta}h_{jjk}^{\beta}+\lambda_{i}^{\alpha}\lambda_{j}^{\beta}h_{jjk}^{\alpha}h_{iik}^{\beta}\big)\bigg\rbrace dM.\notag
\end{align}
Besides, 
\begin{equation}\label{2par}
	\begin{aligned}
	&\sum_{\alpha,\beta,\gamma}\big(\sum_{i}\lambda_{i}^{\alpha}\lambda_{i}^{\beta}\lambda_{i}^{\gamma}\big)^2-\sum_{i,k,\alpha,\beta,\gamma}(\lambda_{k}^{\alpha})^2\lambda_{k}^{\gamma}(\lambda_{i}^{\beta})^2\lambda_{i}^{\gamma}\\
	=&
	2\big(\sum_{i}(\lambda_{i}^{n+1})^2\lambda_{i}^{n+2}\big)^2+2\big(\sum_{i}(\lambda_{i}^{n+2})^2\lambda_{i}^{n+1}\big)^2\\
	&-2\sum_{k}(\lambda_{k}^{n+1})^3\sum_{i}(\lambda_{i}^{n+2})^2\lambda_{i}^{n+1}-2\sum_{k}(\lambda_{k}^{n+2})^3\sum_{i}(\lambda_{i}^{n+1})^2\lambda_{i}^{n+2},
\end{aligned}
\end{equation}
Moreover,
\begin{equation}\label{3part}
	\begin{aligned}
		&\frac{1}{2}|\nabla S|^2
		-\sum_{i,j,k,\alpha,\beta}
		(\lambda_{i}^{\alpha}\lambda_{j}^{\alpha}h_{iik}^{\beta}h_{jjk}^{\beta}
		+\lambda_{i}^{\alpha}\lambda_{j}^{\beta}h_{jjk}^{\alpha}h_{iik}^{\beta})\\
		={}&\sum_{i,j,k}\big(4\lambda_{i}^{n+1}\lambda_{j}^{n+2}h_{iik}^{n+1}h_{jjk}^{n+2}
		-\lambda_{i}^{n+1}\lambda_{j}^{n+1}h_{iik}^{n+2}h_{jjk}^{n+2}\\
		&\qquad -\lambda_{i}^{n+2}\lambda_{j}^{n+2}h_{iik}^{n+1}h_{jjk}^{n+1}
		-2\lambda_{i}^{n+1}\lambda_{j}^{n+2}h_{jjk}^{n+1}h_{iik}^{n+2}\big).
	\end{aligned}
\end{equation}
Substituting \eqref{2par} and \eqref{3part} into \eqref{middle}, we obtain \eqref{A-2B}.
\end{proof}
When $S>0$, combining \eqref{A-2B}, Lemma~\ref{xj} and \eqref{oriS}, we obtain 
		\begin{equation}\label{geq}
			\begin{aligned}
				\int_{M}|\nabla^2 h|^2dM\geqslant\int_{M}\Big(\frac{3}{2}\sum_{\alpha,\beta}(A_{\alpha,\beta}-2B_{\alpha,\beta})+\Lambda+\frac{3}{8}|\nabla S|^2+\frac{3\left(nS-|\mathcal{A}|^2\right)^2}{2(n+4)S}\Big)dM,
			\end{aligned}
		\end{equation}
where
\begin{align}
		\Lambda={}&-\frac{3}{2}|\nabla h|^2
		-\sum_{\alpha\neq\beta}(A_{\alpha,\beta}-2B_{\alpha,\beta})
		+\sum_{k}(\lambda_{k}^{n+1})^3\sum_{i}(\lambda_{i}^{n+2})^2\lambda_{i}^{n+1}\label{Lamb}\\
		&+\sum_{k}(\lambda_{k}^{n+2})^3\sum_{i}(\lambda_{i}^{n+1})^2\lambda_{i}^{n+2}
		-\sum_{i,j,k}\bigl(4\lambda_{i}^{n+1}\lambda_{j}^{n+2}h_{ijk}^{n+1}h_{ijk}^{n+2}\notag\\
		& -2\lambda_{i}^{n+1}\lambda_{i}^{n+2}h_{ijk}^{n+1}h_{ijk}^{n+2}\bigr)
		-\frac{1}{2}|\mathcal{A}|^2+\frac{1}{2}S^2\notag\\
		&-\big(\sum_{i}(\lambda_{i}^{n+1})^2\lambda_{i}^{n+2}\big)^2
		-\big(\sum_{i}(\lambda_{i}^{n+2})^2\lambda_{i}^{n+1}\big)^2\notag\\
		&-\frac{1}{2}\sum_{i,j,k}\bigl(4\lambda_{i}^{n+1}\lambda_{j}^{n+2}h_{iik}^{n+1}h_{jjk}^{n+2}
		-\lambda_{i}^{n+1}\lambda_{j}^{n+1}h_{iik}^{n+2}h_{jjk}^{n+2}\notag\\
		&-\lambda_{i}^{n+2}\lambda_{j}^{n+2}h_{iik}^{n+1}h_{jjk}^{n+1}
		-2\lambda_{i}^{n+1}\lambda_{j}^{n+2}h_{jjk}^{n+1}h_{iik}^{n+2}\bigr).\notag
\end{align}
We next prove a pointwise estimate for $\Lambda$.
\begin{lem}\label{Lambdalemma}
Let $M^n$ be a closed minimal submanifold in the unit sphere $\mathbb{S}^{n+2}$ with flat normal bundle. If $S$ is constant and $n\leqslant S\leqslant n+\delta\leqslant2n$ for some $\delta>0$, then
\begin{equation}\label{pointwise}
	\begin{aligned}
			-\Lambda	\leqslant&\left(\frac{3}{2}+\big(3\mu_{1}+\max\{\mu_{1},\mu_{2}\}\big)\epsilon\right)|\nabla h|^2+\frac{1}{4}\mu_{1}\Delta(S^2-|\mathcal{A}|^2)\\
			&+\left(\mu_{1}(S-2n)+\frac{3n-4}{2n}S-1\right)\det\mathcal{A},
	\end{aligned}
\end{equation}
where  
\begin{equation}\label{mu}
	\begin{aligned}
	\mu_{1}=&\frac{\Gamma_n}{3}+\frac{n-1+\sqrt{n(n-2)}}{n}x+\frac{n-1}{2n}y,\\
\mu_{2}=&\frac{\Gamma_n}{3}+\frac{n-1+\sqrt{n(n-2)}}{n}\frac{1}{x}+\frac{n-1}{2n}\frac{1}{y},
\end{aligned}
\end{equation}
$x,y$ are any positive numbers, $\Gamma_n$ is defined in \eqref{eq:Gamma-n}, and $\Lambda$ is defined in \eqref{Lamb}.
\end{lem}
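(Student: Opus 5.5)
At the fixed point $q$ we work in the frame \eqref{eq:diag}, so $a_r=\|A^{n+r}\|^2$, $a_1a_2=\det\mathcal A$, $|\mathcal A|^2=a_1^2+a_2^2$, and $a_2=b\leqslant\epsilon$ by \eqref{eq:b-eps}. The plan is to go through the terms of $-\Lambda$ in \eqref{Lamb} one group at a time and bound each by either a multiple of $|\nabla h|^2$, a multiple of $\det\mathcal A$, or a piece of a divergence. Two terms are immediate. The term $\frac32|\nabla h|^2$ carries over as is. The terms $\frac12|\mathcal A|^2-\frac12S^2$ give exactly $-\det\mathcal A$ by \eqref{relation}; this accounts for the ``$-1$'' in the coefficient of $\det\mathcal A$.

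Next come the mixed gradient terms. For $\sum_{\alpha\neq\beta}(A_{\alpha,\beta}-2B_{\alpha,\beta})$ I would apply \eqref{key} with the smaller shape operator. The term with $\alpha=n+2$ is bounded by $\frac{\Gamma_n}{3}a_2|\nabla h^{n+1}|^2$. The term with $\alpha=n+1$, $\beta=n+2$ I would not estimate via $a_1$. Instead I would rewrite $h^{n+2}_{ijk}$ through the flatness relation \eqref{letandsum}, which lets the smallness move onto $a_2$.

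The terms $\sum_{i,j,k}\lambda_i^{n+1}\lambda_j^{n+2}h^{n+1}_{ijk}h^{n+2}_{ijk}$ and $\sum\lambda_i^{n+1}\lambda_i^{n+2}h^{n+1}_{ijk}h^{n+2}_{ijk}$ are handled with Lemma~\ref{inesharp}. Take $x=\lambda^{n+1}$ and $y=\lambda^{n+2}$. These satisfy the hypotheses, since $\sum x_iy_i=\langle A^{n+1},A^{n+2}\rangle=0$, with $X=a_1$ and $Y=a_2$. Then \eqref{eq:opt-two} bounds $|x_iy_j|$ for $i\neq j$ and \eqref{eq:opt-same-index} bounds $|x_iy_i|$, each by $\sqrt{a_1a_2}$ times a constant. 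Combined with $2|h^{n+1}_{ijk}h^{n+2}_{ijk}|\leqslant x|\nabla h^{n+1}|^2\sqrt{a_2/a_1}+x^{-1}|\nabla h^{n+2}|^2\sqrt{a_1/a_2}$, this produces $a_2\,(\cdot)\,x|\nabla h^{n+1}|^2+a_1\,(\cdot)\,x^{-1}|\nabla h^{n+2}|^2$. The same weighted Young inequality with parameter $y$ applies to the $h_{iik}$-terms in the last bracket of \eqref{Lamb}. Lemma~\ref{fine} and \eqref{S_k} let me replace $\sum_i\lambda_i^{n+1}h^{n+1}_{iik}$ by $-\sum_i\lambda_i^{n+2}h^{n+2}_{iik}$ and swap the normal indices, so every factor carries $a_2$ up to Cauchy–Schwarz.

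Collecting these pieces, the coefficient of $a_2|\nabla h^{n+1}|^2$ is $\le 3\mu_1$ and the coefficient of the $|\nabla h^{n+2}|^2$ part is $\le\max\{\mu_1,\mu_2\}$. Using $a_2\leqslant\epsilon$ then gives the $\big(3\mu_1+\max\{\mu_1,\mu_2\}\big)\epsilon|\nabla h|^2$ term. The factor $3$ will only come out with careful bookkeeping, and a different grouping may be needed to get it.

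The cubic terms are the main obstacle. These are $\sum_k(\lambda_k^{n+1})^3\sum_i(\lambda_i^{n+2})^2\lambda_i^{n+1}$ and its mirror, together with the two negative squares. The residual factor $a_1|\nabla h^{n+2}|^2$ left from the previous step also cannot be absorbed pointwise. My plan is to convert both into $\det\mathcal A$ plus a Laplacian. Since $S$ is constant, \eqref{1p} gives $|\nabla h|^2=S(S-n)-2\det\mathcal A$. I would also compute $\frac14\Delta(S^2-|\mathcal A|^2)=\frac12\Delta\det\mathcal A$ using \eqref{matrix}. This expresses $\Delta(a_1a_2)$ as $2(2n-S)a_1a_2$, plus terms in $\sum_i(\lambda^{n+1}_i)^2\lambda^{n+2}_i$ and the cubic traces, plus gradient cross terms. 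Adding $\frac14\mu_1\Delta(S^2-|\mathcal A|^2)$ with the chosen coefficient $\mu_1$ should cancel the gradient contributions weighted by $\mu_1$. It leaves $\mu_1(S-2n)\det\mathcal A$ and cubic remainders. Those remainders are bounded by \eqref{eq:opt-three} and \eqref{use}: terms like $|\sum_i(\lambda_i^{n+2})^2\lambda_i^{n+1}|\cdot|\sum_k(\lambda_k^{n+1})^3|$ are controlled by $\sqrt{\frac{n-1}{n}a_1}\cdot\frac{n-2}{2n}a_1a_2$-type bounds together with $a_1\leqslant S$. The target is to land exactly on $\frac{3n-4}{2n}S\det\mathcal A$. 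Checking that the constants close to precisely this value is where I expect most of the work to lie.
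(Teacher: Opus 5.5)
\textbf{Verdict.} Your skeleton is the right one: pointwise bounds, then a single use of $\frac14\mu_1\Delta(S^2-|\mathcal A|^2)$ to eliminate $a_1|\nabla h^{n+2}|^2$. As planned, however, it does not reach the stated constants, for three concrete reasons.

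\textbf{First: the Laplacian produces no cubic terms.} Because the normal bundle is flat, \eqref{matrix} reads $\Delta A^\alpha=nA^\alpha-\sum_\beta\langle A^\alpha,A^\beta\rangle A^\beta$. Hence the zero-order part of $\frac14\Delta(S^2-|\mathcal A|^2)$ is exactly $(2n-S)\det\mathcal A$, and Lemma~\ref{fine} with \eqref{S_k} gives \eqref{14L}; nothing like $\sum_i(\lambda_i^{n+1})^2\lambda_i^{n+2}$ occurs. The cubic terms and the two squares (which enter $-\Lambda$ with a plus sign) must therefore be bounded purely pointwise. The constant $\frac{3n-4}{2n}=\frac{n-2}{2n}+\frac{n-1}{n}$ comes from two estimates:
\begin{itemize}
\item Cauchy--Schwarz and \eqref{eq:opt-three} give $\bigl(\sum_i(\lambda_i^{n+1})^2\lambda_i^{n+2}\bigr)^2+\bigl(\sum_i(\lambda_i^{n+2})^2\lambda_i^{n+1}\bigr)^2\leqslant S\sum_i(\lambda_i^{n+1}\lambda_i^{n+2})^2\leqslant\frac{n-2}{2n}S\det\mathcal A$.
\item The bounds $|\sum_k(\lambda_k^{n+1})^3|\leqslant L_1a_1$ and $|\sum_i(\lambda_i^{n+2})^2\lambda_i^{n+1}|\leqslant L_1a_2$, with $L_1^2\leqslant\frac{n-1}{n}a_1$ by \eqref{use} (and the mirror bounds with $L_2$), give $\frac{n-1}{n}S\det\mathcal A$.
\end{itemize}
The bound you sketch, $\sqrt{\frac{n-1}{n}a_1}\cdot\frac{n-2}{2n}a_1a_2$, is not even of the right homogeneity.

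\textbf{Second: the last bracket of \eqref{Lamb} needs no Young inequality.} By \eqref{wy} it equals $-4\sum_k(\sum_i\lambda_i^{n+2}h^{n+2}_{iik})^2-4\sum_k(\sum_i\lambda_i^{n+2}h^{n+1}_{iik})^2\leqslant0$, and it enters $-\Lambda$ with coefficient $+\frac12$, so it is simply dropped. Any positive bound for it adds terms that are not contained in $\mu_1,\mu_2$. The parameter $y$ belongs to the same-index term $\sum_{i,j,k}\lambda_i^{n+1}\lambda_i^{n+2}h^{n+1}_{ijk}h^{n+2}_{ijk}$, estimated with \eqref{eq:opt-same-index}; that is the source of $\frac{n-1}{2n}y$ and $\frac{n-1}{2n}\frac1y$.

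\textbf{Third: the factor $3\mu_1+\max\{\mu_1,\mu_2\}$ is not bookkeeping luck.} It comes from the gradient terms that \eqref{14L} does \emph{not} cancel.
\begin{itemize}
\item The pointwise bounds give \eqref{zhudian}: $-\Lambda\leqslant\frac32|\nabla h|^2+\mu_1a_1|\nabla h^{n+2}|^2+\mu_2a_2|\nabla h^{n+1}|^2+(\frac{3n-4}{2n}S-1)\det\mathcal A$.
\item Substituting \eqref{14L} for $a_1|\nabla h^{n+2}|^2$ yields $\frac14\mu_1\Delta(S^2-|\mathcal A|^2)+\mu_1(S-2n)\det\mathcal A-\mu_1a_2|\nabla h^{n+1}|^2$, plus $4\mu_1$ times the two squared sums.
\item Cauchy--Schwarz bounds those squared sums by $a_2|\nabla h^{n+2}|^2$ and $a_2|\nabla h^{n+1}|^2$. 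The total is $(3\mu_1+\mu_2)a_2|\nabla h^{n+1}|^2+4\mu_1a_2|\nabla h^{n+2}|^2$, and $a_2\leqslant\epsilon$ finishes.
\end{itemize}
This requires the coefficient of $a_1|\nabla h^{n+2}|^2$ to be exactly $\mu_1$, including the $\frac{\Gamma_n}{3}$ obtained by applying \eqref{key} to $(\alpha,\beta)=(n+1,n+2)$.

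Your detour through \eqref{letandsum} is valid on its own; it gives, e.g., $A_{n+1,n+2}-2B_{n+1,n+2}\leqslant\frac43a_2|\nabla h^{n+1}|^2$. But it removes $\frac{\Gamma_n}{3}$ from that coefficient and creates a new positive $a_2|\nabla h^{n+1}|^2$ term. Two options then arise, and neither gives the lemma:
\begin{itemize}
\item If you keep the Laplacian coefficient $\frac14\mu_1$ required by the statement, the coefficient of $a_2|\nabla h^{n+1}|^2$ becomes $3\mu_1+\mu_2+\frac43$. This exceeds $3\mu_1+\max\{\mu_1,\mu_2\}$ whenever $\mu_2\geqslant\mu_1$, for instance at $x=y=\frac{\sqrt3}{3}$.
\item If you use a smaller Laplacian coefficient, you prove a different inequality with a less negative coefficient of $\det\mathcal A$.
\end{itemize}
For the lemma as stated, apply \eqref{key} to both off-diagonal pairs, as in \eqref{same}.
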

\begin{proof}
First, \eqref{relation} gives
 \begin{equation*}
	|\mathcal{A}|^2=S^2-2\det\mathcal{A}.
\end{equation*}
At any fixed point, choose the frame as in \eqref{eq:diag}, thus $a_{r}=\|A^{n+r}\|^2$, $r=1,2$.
By the Cauchy inequality and \eqref{eq:opt-three}, we have
\begin{equation}\label{sharper1}
\big(\sum_{i}(\lambda_{i}^{n+1})^2\lambda_{i}^{n+2}\big)^2+\big(\sum_{i}(\lambda_{i}^{n+2})^2\lambda_{i}^{n+1}\big)^2\leqslant S\sum_{i}(\lambda_{i}^{n+1})^2(\lambda_{i}^{n+2})^2\leqslant \frac{n-2}{2n}S\det\mathcal{A}.
\end{equation}
Set
\[
L_1:=\max_i|\lambda_i^{n+1}|,
\qquad
L_2:=\max_i|\lambda_i^{n+2}|.
\]
Then, by \eqref{use},
\begin{align}
	&-\sum_{k}(\lambda_{k}^{n+1})^3\sum_{i}(\lambda_{i}^{n+2})^2\lambda_{i}^{n+1}-\sum_{k}(\lambda_{k}^{n+2})^3\sum_{i}(\lambda_{i}^{n+1})^2\lambda_{i}^{n+2}\label{sharper2}\\
	\leqslant&L_1^2\sum_{k}(\lambda_{k}^{n+1})^2\sum_{i}(\lambda_{i}^{n+2})^2+L_2^2\sum_{k}(\lambda_{k}^{n+2})^2\sum_{i}(\lambda_{i}^{n+1})^2\notag\\
	\leqslant&\frac{n-1}{n}a_{1}^2a_{2}+\frac{n-1}{n}a_{2}^2a_{1}\notag\\
	=&\frac{n-1}{n}S\det\mathcal{A}.\notag
\end{align}
By \eqref{flatness-plus} and \eqref{S_k}, we have
\begin{equation}\label{wy}
	\begin{aligned}
		&\sum_{i,j,k}\bigl(4\lambda_{i}^{n+1}\lambda_{j}^{n+2}h_{iik}^{n+1}h_{jjk}^{n+2}
		-\lambda_{i}^{n+1}\lambda_{j}^{n+1}h_{iik}^{n+2}h_{jjk}^{n+2}\\
		&\qquad -\lambda_{i}^{n+2}\lambda_{j}^{n+2}h_{iik}^{n+1}h_{jjk}^{n+1}
		-2\lambda_{i}^{n+1}\lambda_{j}^{n+2}h_{jjk}^{n+1}h_{iik}^{n+2}\bigr)\\
		={}&-4\sum_{k}\Big(\sum_{i}\lambda_{i}^{n+2}h_{iik}^{n+2}\Big)^2
		-4\sum_{k}\Big(\sum_{i}\lambda_{i}^{n+2}h_{iik}^{n+1}\Big)^2\leqslant0.
	\end{aligned}
\end{equation}
Let $x,y$ be any positive numbers. From \eqref{eq:opt-same-index}, \eqref{eq:opt-two} and the Cauchy inequality, we conclude that
\begin{align}
		\big|\sum_{i,j,k}\lambda_{i}^{n+1}\lambda_{j}^{n+2}h_{ijk}^{n+1}h_{ijk}^{n+2}\big|\leqslant&\sum_{i,j,k}|\lambda_{i}^{n+1}||\lambda_{j}^{n+2}||h_{ijk}^{n+1}||h_{ijk}^{n+2}|\label{inequ1}\\
		\leqslant&\frac{n-1+\sqrt{n(n-2)}}{2n}\sum_{i,j,k}\|A^{n+1}\|\|A^{n+2}\||h_{ijk}^{n+1}||h_{ijk}^{n+2}|\notag\\
		\leqslant&\frac{n-1+\sqrt{n(n-2)}}{2n}\sqrt{\sum_{i,j,k}a_{1}(h_{ijk}^{n+2})^2\sum_{i,j,k}a_{2}(h_{ijk}^{n+1})^2}\notag\\
		\leqslant&\frac{n-1+\sqrt{n(n-2)}}{2n}\left(\frac{x}{2}a_{1}|\nabla h^{n+2}|^2+\frac{1}{2x}a_{2}|\nabla h^{n+1}|^2\right).\notag
\end{align}
Similarly, using \eqref{eq:opt-same-index}, we have
\begin{equation}\label{inequ2}
	\big|\sum_{i,j,k}\lambda_{i}^{n+1}\lambda_{i}^{n+2}h_{ijk}^{n+1}h_{ijk}^{n+2}\big|\leqslant\frac{n-1}{2n}\left(\frac{y}{2}a_{1}|\nabla h^{n+2}|^2+\frac{1}{2y}a_{2}|\nabla h^{n+1}|^2\right).
\end{equation}
Combining (\ref{inequ1}) and \eqref{inequ2}, we obtain
\begin{equation}\label{sharper3}
	\begin{aligned}
	&\sum_{i,j,k}\Big(4\lambda_{i}^{n+1}\lambda_{j}^{n+2}h_{ijk}^{n+1}h_{ijk}^{n+2}-2\lambda_{i}^{n+1}\lambda_{i}^{n+2}h_{ijk}^{n+1}h_{ijk}^{n+2}\Big)\\
	\leqslant& \Big(\frac{n-1+\sqrt{n(n-2)}}{n}x+\frac{n-1}{2n}y\Big)a_{1}|\nabla h^{n+2}|^2\\
	&+\Big(\frac{n-1+\sqrt{n(n-2)}}{n}\frac{1}{x}+\frac{n-1}{2n}\frac{1}{y}\Big)a_{2}|\nabla h^{n+1}|^2.
\end{aligned}
\end{equation}
Moreover, \eqref{key} yields
\begin{equation}\label{same}
	\sum_{\alpha\neq\beta}(A_{\alpha,\beta}-2B_{\alpha,\beta})\leqslant \frac{\Gamma_{n}}{3}(a_{1}|\nabla h^{n+2}|^2+a_{2}|\nabla h^{n+1}|^2).
\end{equation}
Hence, \eqref{sharper1}, \eqref{sharper2}, \eqref{wy}, \eqref{sharper3} and \eqref{same} together with \eqref{Lamb} give
\begin{equation}\label{zhudian}
	-\Lambda\leqslant\frac{3}{2}|\nabla h|^2+\mu_{1}a_{1}|\nabla h^{n+2}|^2+\mu_{2}a_{2}|\nabla h^{n+1}|^2+\big(\frac{3n-4}{2n}S-1\big)\det\mathcal{A},
\end{equation}
where $\mu_{1}$ and $\mu_{2}$ are defined in \eqref{mu}.
We next compute the Laplacian of $S^2-|\mathcal{A}|^2$ as follows. 
\begin{equation}\label{product}
	\begin{aligned}
		\frac{1}{4}\Delta(S^2-|\mathcal{A}|^2)
		={}&\big(S|\nabla h|^2+nS^2-S|\mathcal{A}|^2
		+\frac{1}{2}|\nabla S|^2\big)\\
		&-\Big(n|\mathcal{A}|^2-\operatorname{Tr}(\mathcal{A}^3)
		+\frac{1}{2}\sum_{\alpha,\beta}|\nabla\langle A^{\alpha},A^{\beta}\rangle|^2\\
		&\qquad +\sum_{\alpha,\beta}\langle A^{\alpha},A^{\beta}\rangle
		\sum_{k}\langle\nabla_{e_k}A^{\alpha},\nabla_{e_k}A^{\beta}\rangle\Big).
	\end{aligned}
\end{equation}
Since $\langle A^{n+1},A^{n+2}\rangle=0$ at the chosen point, (\ref{product}) becomes 
\begin{align}
		\frac{1}{4}\Delta(S^2-|\mathcal{A}|^2)
		=&a_{2}|\nabla h^{n+1}|^2+a_{1}|\nabla h^{n+2}|^2
		+(2n-S)\det\mathcal{A}+\frac{1}{2}|\nabla S|^2\label{xldr}\\
		&-2\sum_{k}(\sum_{i}\lambda_{i}^{n+1}h_{iik}^{n+1})^2-2\sum_{k}(\sum_{i}\lambda_{i}^{n+2}h_{iik}^{n+2})^2
		-|\nabla\langle A^{n+1},A^{n+2}\rangle|^2\notag\\
		=&a_{2}|\nabla h^{n+1}|^2+a_{1}|\nabla h^{n+2}|^2
		+(2n-S)\det\mathcal{A}\notag\\
		&+4\sum_{i,j,k}\lambda_{i}^{n+1}\lambda_{j}^{n+2}h_{iik}^{n+1}h_{jjk}^{n+2}
		-|\nabla\langle A^{n+1},A^{n+2}\rangle|^2.\notag
\end{align}
A direct expansion gives
\begin{equation*}
	\begin{aligned}
		|\nabla \langle A^{n+1},A^{n+2}\rangle|^2
		=\sum_{i,j,k}\big(&\lambda_{i}^{n+1}\lambda_{j}^{n+1}h_{iik}^{n+2}h_{jjk}^{n+2}
		+\lambda_{i}^{n+2}\lambda_{j}^{n+2}h_{iik}^{n+1}h_{jjk}^{n+1}\\
		&+2\lambda_{i}^{n+1}\lambda_{j}^{n+2}h_{iik}^{n+2}h_{jjk}^{n+1}\big),
	\end{aligned}
\end{equation*}
Lemma~\ref{fine} and \eqref{S_k} then transform \eqref{xldr} into
\begin{equation}\label{14L}
	\begin{aligned}
		\frac{1}{4}\Delta(S^2-|\mathcal{A}|^2)=&a_{1}|\nabla h^{n+2}|^2+a_{2}|\nabla h^{n+1}|^2+(2n-S)\det\mathcal{A}\\
		&-4\sum_{k}\big(\sum_{i}\lambda_{i}^{n+2}h_{iik}^{n+2}\big)^2-4\sum_{k}\big(\sum_{i}\lambda_{i}^{n+2}h_{iik}^{n+1}\big)^2.
	\end{aligned}
\end{equation}
Combining \eqref{zhudian} with \eqref{14L} yields
\begin{align*}
-\Lambda
\leqslant{}&\frac{3}{2}|\nabla h|^2
+\mu_{1}\Big(\frac{1}{4}\Delta(S^2-|\mathcal{A}|^2)-a_{2}|\nabla h^{n+1}|^2
+(S-2n)\det\mathcal{A}\\
&+4\sum_{k}\big(\sum_{i}\lambda_{i}^{n+2}h_{iik}^{n+2}\big)^2
+4\sum_{k}\big(\sum_{i}\lambda_{i}^{n+2}h_{iik}^{n+1}\big)^2\Big)\\
&+\mu_{2}a_{2}|\nabla h^{n+1}|^2+
\Big(\frac{3n-4}{2n}S-1\Big)\det\mathcal{A}\\
\leqslant{}&\frac{3}{2}|\nabla h|^2+\frac{1}{4}\mu_{1}\Delta(S^2-|\mathcal{A}|^2)
+(3\mu_{1}+\mu_{2})a_{2}|\nabla h^{n+1}|^2\\
&+4\mu_{1}a_{2}|\nabla h^{n+2}|^2
+\Big(\mu_{1}(S-2n)+\frac{3n-4}{2n}S-1\Big)\det\mathcal{A}\\
\leqslant{}&\Big(\frac{3}{2}+\big(3\mu_{1}+\max\{\mu_{1},\mu_{2}\}\big)\epsilon\Big)|\nabla h|^2
+\frac{1}{4}\mu_{1}\Delta(S^2-|\mathcal{A}|^2)\\
&+\Big(\mu_{1}(S-2n)+\frac{3n-4}{2n}S-1\Big)\det\mathcal{A}.
\end{align*}
\end{proof}
We now derive a lower bound for $\int_{M}|\nabla^2 h|^2dM$.
\begin{thm}\label{thmlower1}
	Let $M^n$ be a closed minimal submanifold in the unit sphere $\mathbb{S}^{n+2}$ with flat normal bundle. If $S$ is constant and $n\leqslant S\leqslant n+\delta\leqslant2n$ for some $\delta>0$, then
\begin{equation}\label{newlower}
	\begin{aligned}
		\int_{M}|\nabla^2 h|^2dM\geqslant\int_{M}&\bigg\lbrace\sigma\sum_{\alpha,\beta}(A_{\alpha,\beta}-2B_{\alpha,\beta})-\left(\sigma+\frac{2\sigma}{3}(3\mu_{1}+\max\{\mu_{1},\mu_{2}\})\epsilon\right)|\nabla h|^2\\
		&-\frac{2\sigma}{3}\left(\mu_{1}(S-2n)+\frac{3n-4}{2n}S-1\right)\det\mathcal{A}+(\frac{3}{4}-\frac{\sigma}{2})F\bigg\rbrace dM,
	\end{aligned}
\end{equation}
where $0<\sigma<\dfrac{3}{2}$, $\mu_1$ and $\mu_{2}$ are defined in \eqref{mu}.
\end{thm}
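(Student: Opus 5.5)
We prove \eqref{newlower} by taking a convex combination of two lower bounds for $\int_M|\nabla^2h|^2dM$: the one from Lemma~\ref{xj}, and the one in \eqref{geq} combined with the pointwise estimate of Lemma~\ref{Lambdalemma}. Since $S$ is constant, $\nabla S=0$. Moreover \eqref{SimonsS} gives $nS-|\mathcal A|^2=-|\nabla h|^2$, so the last term in both \eqref{0xj} and \eqref{geq} is nonnegative and may be dropped. We obtain the two inequalities
\[
\int_M|\nabla^2h|^2dM\geqslant\int_M\frac34F\,dM,
\qquad
\int_M|\nabla^2h|^2dM\geqslant\int_M\Big(\frac32\sum_{\alpha,\beta}(A_{\alpha,\beta}-2B_{\alpha,\beta})+\Lambda\Big)dM .
\]

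Next we use Lemma~\ref{Lambdalemma} to bound $\int_M\Lambda\,dM$ from below. In \eqref{pointwise} the coefficient $\mu_1$ is a constant, since $x,y$ are fixed positive numbers and $\Gamma_n$ depends only on $n$. Hence $\int_M\frac14\mu_1\Delta(S^2-|\mathcal A|^2)\,dM=0$ by Stokes' theorem. The coefficient of $\det\mathcal A$, namely $\mu_1(S-2n)+\frac{3n-4}{2n}S-1$, is also constant because $S$ is. Therefore
\[
\int_M\Lambda\,dM\geqslant-\int_M\Big\{\Big(\frac32+(3\mu_1+\max\{\mu_1,\mu_2\})\epsilon\Big)|\nabla h|^2+\Big(\mu_1(S-2n)+\frac{3n-4}{2n}S-1\Big)\det\mathcal A\Big\}dM .
\]
Substituting this into the second inequality gives a lower bound $I_2$ involving only $\sum_{\alpha,\beta}(A_{\alpha,\beta}-2B_{\alpha,\beta})$, $|\nabla h|^2$ and $\det\mathcal A$.

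We now take weights $\tau=\frac{2\sigma}{3}\in(0,1)$ and $1-\tau$, and write
\[
\int_M|\nabla^2h|^2dM\geqslant\tau I_2+(1-\tau)\int_M\frac34F\,dM .
\]
The weight $\tau$ multiplies $\frac32\sum_{\alpha,\beta}(A_{\alpha,\beta}-2B_{\alpha,\beta})$ to give $\sigma\sum_{\alpha,\beta}(A_{\alpha,\beta}-2B_{\alpha,\beta})$. It multiplies $\frac32|\nabla h|^2$ to give $\sigma|\nabla h|^2$, and the $\epsilon$-term and the $\det\mathcal A$-term acquire the factor $\frac{2\sigma}{3}$. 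The $F$-coefficient is $(1-\frac{2\sigma}{3})\frac34=\frac34-\frac{\sigma}{2}$. This reproduces \eqref{newlower} exactly, and the condition $0<\sigma<\frac32$ is precisely $\tau\in(0,1)$.

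The only delicate point is whether the bound on $\Lambda$ from Lemma~\ref{Lambdalemma} can be integrated. This requires the Laplacian term to have a constant coefficient, so that it vanishes after integration, and it also requires constant $S$ so that the $\det\mathcal A$ coefficient can be pulled out of the integral. Both hold under the hypotheses, so beyond this the argument is bookkeeping of coefficients. The case $S=0$ is excluded since $S\geqslant n$.
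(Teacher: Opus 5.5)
Your proposal is correct and follows essentially the same route as the paper: integrate the pointwise bound of Lemma~\ref{Lambdalemma} (the $\frac14\mu_1\Delta(S^2-|\mathcal A|^2)$ term integrates to zero), then take the convex combination of \eqref{0xj} and \eqref{geq} with weights $1-\frac{2\sigma}{3}$ and $\frac{2\sigma}{3}$, dropping $|\nabla S|^2$ and the nonnegative term $\frac{3(nS-|\mathcal A|^2)^2}{2(n+4)S}$. The only difference is that you drop these terms before combining rather than after. Your remark that the $\det\mathcal A$ coefficient must be constant is unnecessary, since a pointwise inequality integrates directly, but it does no harm.
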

\begin{proof}
Integrating \eqref{pointwise} over $M$ yields
\begin{equation}\label{jfLmbda}
	\begin{aligned}
		\int_{M}\Lambda dM
		\geqslant\int_{M}\bigg\lbrace&-\left(\frac{3}{2}+(3\mu_{1}+\max\{\mu_{1},\mu_{2}\})\epsilon\right)|\nabla h|^2\\
		&-\left(\mu_{1}(S-2n)+\frac{3n-4}{2n}S-1\right)\det\mathcal{A}\bigg\rbrace dM.
	\end{aligned}
\end{equation}
Let $0<\sigma<\dfrac{3}{2}$. Taking the convex combination of \eqref{0xj} and \eqref{geq} with weights $1-\frac{2\sigma}{3}$ and $\frac{2\sigma}{3}$ gives
\begin{equation}\label{combination}
	\begin{aligned}
		\int_{M}|\nabla^2 h|^2dM
		\geqslant\int_{M}\bigg\lbrace&\sigma\sum_{\alpha,\beta}(A_{\alpha,\beta}-2B_{\alpha,\beta})
		+\frac{2\sigma}{3}\Lambda+\frac{\sigma}{4}|\nabla S|^2\\
		&+\Big(\frac{3}{4}-\frac{\sigma}{2}\Big)F
		+\frac{3\left(nS-|\mathcal{A}|^2\right)^2}{2(n+4)S}\bigg\rbrace dM.
	\end{aligned}
\end{equation}
Since
\[
|\nabla S|^2=0,\qquad \frac{3\left(nS-|\mathcal{A}|^2\right)^2}{2(n+4)S}\geqslant0,
\]
substituting \eqref{jfLmbda} into \eqref{combination} yields \eqref{newlower}.
\end{proof}
For further use, we need the following lemma.
\begin{lem}\label{Young}
	Assume the hypotheses of Lemma~\ref{mixA-2B}. Let $0<\sigma<3$, $\rho>0$ and set
	\begin{equation}\label{theta}
		\theta=\left(1-\frac\sigma3\right)\left(c(1+\rho)\right)^{1/3}.
	\end{equation}
	Then for any $\kappa>0$, if $S\leqslant n+\delta$, then
	\begin{equation}\label{eq:young}
		\theta F^{\frac{1}{3}}|\nabla h|^2
		\leqslant \kappa F+\frac{2}{3\sqrt3}\theta^{\frac{3}{2}}\kappa^{-\frac{1}{2}}\sqrt{(n+\delta)\delta}|\nabla h|^2.
	\end{equation}
\end{lem}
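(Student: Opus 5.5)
The plan is to obtain \eqref{eq:young} as a one-variable Young (weighted AM--GM) inequality in $u:=F^{1/3}\geqslant0$ with $G:=|\nabla h|^2$ treated as a parameter, and then to remove the extra factor $|\nabla h|$ that this produces by a pointwise bound on $|\nabla h|^2$ available under the hypotheses of Lemma~\ref{mixA-2B}.

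First I check the sign of $\theta$. Since $0<\sigma<3$ and $c>0$, $\rho>0$, the definition \eqref{theta} gives $\theta>0$; also $F\geqslant0$ by its definition \eqref{F} as a sum of squares. For fixed $G\geqslant0$ and $\kappa>0$, minimize $\psi(u)=\kappa u^3-\theta G u$ over $u\geqslant0$. The critical point is $u_*=\sqrt{\theta G/(3\kappa)}$, and the minimum value is
\[
\psi(u_*)=-\tfrac{2}{3}\theta G\,u_*=-\tfrac{2}{3\sqrt3}\,\theta^{3/2}\kappa^{-1/2}G^{3/2}.
\]
Hence $\theta F^{1/3}G\leqslant\kappa F+\frac{2}{3\sqrt3}\theta^{3/2}\kappa^{-1/2}G^{1/2}\cdot G$ holds pointwise. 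The case $G=0$ is trivial.

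It remains to bound $G^{1/2}=|\nabla h|$ by $\sqrt{(n+\delta)\delta}$. Under the hypotheses of Lemma~\ref{mixA-2B} ($S$ constant, flat normal bundle, $n\leqslant S\leqslant n+\delta\leqslant 2n$), identity \eqref{Sconstant-general} gives
\[
|\nabla h|^2=S(S-n)-2\sigma_2.
\]
Because all $a_r\geqslant0$, we have $\sigma_2\geqslant0$. Therefore
\[
|\nabla h|^2\leqslant S(S-n)\leqslant(n+\delta)\delta,
\]
where the last step uses that $S(S-n)$ is increasing for $S\geqslant n$. Substituting this bound into the Young inequality gives exactly \eqref{eq:young}.

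I expect no real obstacle here. The only points needing care are that $\theta>0$, which requires $\sigma<3$, and that the lower bound $S\geqslant n$ from the hypotheses of Lemma~\ref{mixA-2B} is what makes $S(S-n)$ monotone and nonnegative.
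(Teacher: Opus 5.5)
Your proposal is correct and follows essentially the same route as the paper: the paper quotes Young's inequality $a^{1/3}b\leqslant \kappa a+\frac{2}{3\sqrt3}\kappa^{-1/2}b^{3/2}$ with $a=F$, $b=\theta|\nabla h|^2$, whereas you derive it by minimizing $\kappa u^3-\theta Gu$. Both arguments then absorb the extra factor $|\nabla h|$ using $|\nabla h|^2=S(S-n)-2\sigma_2\leqslant S(S-n)\leqslant(n+\delta)\delta$.
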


\begin{proof}
	For $a,b\geqslant0$, Young's inequality in the form
	$a^{\frac{1}{3}}b\leqslant \kappa a+\frac{2}{3\sqrt3}\kappa^{-\frac{1}{2}}b^{\frac{3}{2}}$ gives
	\[\theta F^{\frac{1}{3}}|\nabla h|^2
	\leqslant \kappa F+\frac{2}{3\sqrt3}\theta^{\frac{3}{2}}\kappa^{-\frac{1}{2}}|\nabla h|^3\]
	with $a=F$ and $b=\theta |\nabla h|^2$. Since $S$ is constant, \eqref{Sconstant-general} gives
	\[|\nabla h|^2=S(S-n)-2\sigma_2\leqslant S(S-n)\leqslant(n+\delta)\delta,\]
	which implies \eqref{eq:young}.
\end{proof}
\begin{proof}[\textbf{{Proof of Theorem~\ref{main} in codimension 2}}]
If $0\leqslant S\leqslant n$, the first-gap theorem quoted in the introduction gives exactly the two alternatives stated in Theorem~\ref{main}. Hence it remains only to rule out constant values in the interval $(n,n+\delta(n,m)]$. For this purpose it is enough to prove the stronger estimates below with the slightly larger endpoints $\delta=\frac{n}{80.04594}$ for $3\leqslant n\leqslant5$ and $\delta=\frac{n}{61.95887}$ for $n\geqslant6$, since $\frac{n}{81}<\frac{n}{80.04594}$ and $\frac{n}{62}<\frac{n}{61.95887}$. \\
Then \eqref{newlower} together with \eqref{newupper} yields
\begin{equation}\label{parame}
	\begin{aligned}
		0\leqslant\int_{M}&\bigg\lbrace\Big(S-2n-3+10\epsilon+\frac{2\epsilon^2}{n+\delta}+\sigma+\frac{2\sigma}{3}(3\mu_{1}+\max\{\mu_{1},\mu_{2}\})\epsilon\Big)|\nabla h|^2\\
		&+(3-\sigma)\sum_{\alpha,\beta}(A_{\alpha,\beta}-2B_{\alpha,\beta})-(\frac{3}{4}-\frac{\sigma}{2})F\\
		&+\frac{2\sigma}{3}\Big(\mu_{1}(S-2n)+\frac{3n-4}{2n}S-1\Big)\det\mathcal{A}\bigg\rbrace dM.
	\end{aligned}
\end{equation}
First, \eqref{sumA-2B} yields
\begin{equation}\label{3-si}
	\begin{aligned}
	&(3-\sigma)\sum_{\alpha,\beta}(A_{\alpha,\beta}-2B_{\alpha,\beta})\\
	\leqslant&(1-\frac{\sigma}{3})\left(S+4+(\Gamma_n-1)\epsilon+\sqrt[3]{c(1+\rho)F+c(1+\rho^{-1})\frac{n-2}{n}(S-\epsilon)\epsilon^2}\right)|\nabla h|^2,
\end{aligned}
\end{equation}
Using $(u+v)^{\frac{1}{3}}\leqslant u^{\frac{1}{3}}+v^{\frac{1}{3}}$ for $u,v\geqslant 0$, Lemmas~\ref{mixA-2B} and~\ref{Young}, we obtain
\begin{equation}\label{3-sig}
\begin{aligned}
&(1-\frac{\sigma}{3})\sqrt[3]{c(1+\rho)F+c(1+\rho^{-1})\frac{n-2}{n}(S-\epsilon)\epsilon^2}|\nabla h|^2\\
	\leqslant&\theta\sqrt[3]{F}|\nabla h|^2+(1-\frac{\sigma}{3})\sqrt[3]{c(1+\rho^{-1})\frac{n-2}{n}(S-\epsilon)\epsilon^2}|\nabla h|^2\\
	\leqslant&\kappa F+\frac{2}{3\sqrt3}\theta^{\frac{3}{2}}\kappa^{-\frac{1}{2}}\sqrt{(n+\delta)\delta}|\nabla h|^2+(1-\frac{\sigma}{3})\sqrt[3]{c(1+\rho^{-1})\frac{n-2}{n}(S-\epsilon)\epsilon^2}|\nabla h|^2,
\end{aligned}
\end{equation}
where $\theta$ is defined in \eqref{theta}.
Take \[x=y=\frac{\sqrt{3}}{3},\] then
\[
\mu_1=\frac{\Gamma_n}{3}
+\frac{3(n-1)+2\sqrt{n(n-2)}}{2n}\frac{\sqrt{3}}{3},
\quad
\mu_2=\frac{\Gamma_n}{3}
+\frac{3(n-1)+2\sqrt{n(n-2)}}{2n}\sqrt{3},
\]
so $\mu_2>\mu_1$.
Substituting \eqref{3-si} and \eqref{3-sig} into \eqref{parame} yields
\begin{equation}\label{finally}
	\begin{aligned}
		0\leqslant\int_M{}&
		\Big(\mathcal C_{1}|\nabla h|^2
		-\Big(\frac34-\frac\sigma2-\kappa\Big)F+\mathcal C_{2}\det\mathcal A\Big)dM,
	\end{aligned}
\end{equation}
where
\begin{align*}
	\mathcal C_{1}={}&S-2n-3+10\epsilon+\frac{2\epsilon^2}{n+\delta}+\sigma
	+\frac{2\sigma}{3}(3\mu_1+\mu_2)\epsilon
	+\left(1-\frac\sigma3\right)
	\Big(S+4+(\Gamma_n-1)\epsilon\Big)\\
	&+\frac{2}{3\sqrt3}\theta^{\frac{3}{2}}\kappa^{-\frac{1}{2}}\sqrt{(n+\delta)\delta}+\left(1-\frac\sigma3\right)
	\sqrt[3]{c(1+\rho^{-1})\frac{n-2}{n}
		(S-\epsilon)\epsilon^2},
\end{align*}
and
\[
\mathcal C_{2}=\frac{2\sigma}{3}\left(
\mu_1(S-2n)+\frac{3n-4}{2n}S-1\right).
\]
For $3\leqslant n\leqslant5$, take
\[
\begin{gathered}
	\delta=\frac n{80.04594},\qquad c=\frac{32}{15},\\
	\sigma=1.392888457230,\qquad
	\rho=0.176286456252,\qquad \kappa=0.053555771285.
\end{gathered}
\]
Then
$\frac{3}{4}-\frac{\sigma}{2}-\kappa=10^{-10}>0$. Since $S\leqslant n+\delta$, the endpoint calculation in Appendix~\ref{app:numerical-codim-two} gives $\mathcal{C}_{1}<0$ and $\mathcal{C}_{2}<0$.\\
For $n\geqslant6$, take
\[
\begin{gathered}
	\delta=\frac n{61.95887},\qquad c=\frac{24}{5}-\frac{16}{n+\delta},\\
	\sigma=1.347869050867,\qquad
	\rho=0.241247176548,\qquad \kappa=0.0760654744665.
\end{gathered}
\]
Then $\frac{3}{4}-\frac{\sigma}{2}-\kappa=10^{-10}>0$. Since $S\leqslant n+\delta$, the endpoint calculation in Appendix~\ref{app:numerical-codim-two} gives $\mathcal{C}_{1}<0$ and $\mathcal{C}_{2}<0$.

In conclusion, the coefficients of
$|\nabla h|^2$, $F$ and $\det\mathcal A$ in \eqref{finally} are strictly negative.
Thus every term in the integrand of \eqref{finally} is non-positive,
whereas the whole integral is non-negative.  Therefore the integral is zero and
all non-positive terms vanish identically. Hence,
\[
|\nabla h|^2\equiv0,
\qquad
\det\mathcal{A}\equiv0,
\qquad
F\equiv0.
\]
Finally, \eqref{SimonsS} gives
\[
0=|\nabla h|^2=S(S-n)-2\det\mathcal{A}=S(S-n).
\]
Since $S\geqslant n$, we conclude that $S\equiv n$. Therefore, the first-gap theorem \cite{GTZ} quoted in the introduction implies that, up to an ambient isometry, $M$ lies in a totally geodesic $\mathbb S^{n+1}\subset\mathbb S^{n+2}$ as the Clifford torus stated in Theorem~\ref{main}.
\end{proof}
\begin{rem}
	Further details regarding the inequalities $\mathcal{C}_{1}<0$ and $\mathcal{C}_{2}<0$ can be found in Appendix~\ref{app:numerical-codim-two}.
\end{rem}

\section{Proof of Theorem~\ref{main} in general codimension}\label{5}
In this section, we consider minimal submanifolds with flat normal bundle in general codimension. Recall from \eqref{eq:bD-def} that $b:=\sum\limits_{r=2}^{m}a_{r}=S-a_1$.  We continue to use the frame chosen in \eqref{eq:diag} in the proof below.
\begin{lem}\label{lem:u-bound}
Let $M^n$ be a closed minimal submanifold in $\mathbb{S}^{n+m}$ with flat normal bundle. If $S$ is constant, then
	\begin{equation}\label{eq:u-bound}
		\sum_{\alpha,\beta,k}\langle A^{\alpha},\nabla_{e_k}A^{\beta}\rangle^2\leqslant2b|\nabla h|^2.
	\end{equation}
\end{lem}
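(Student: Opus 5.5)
Work pointwise at a fixed point $q$ in the frame \eqref{eq:diag}, so that $a_r=\|A^{n+r}\|^2$ and $\langle A^\alpha,A^\beta\rangle=0$ for $\alpha\neq\beta$. Put $u_k^{\alpha\beta}:=\langle A^\alpha,\nabla_{e_k}A^\beta\rangle=\sum_i\lambda_i^\alpha h_{iik}^\beta$. By Lemma~\ref{fine} this is symmetric in $(\alpha,\beta)$, and since $S$ is constant, $\tfrac12 S_k=\sum_\alpha u_k^{\alpha\alpha}=0$. Write $\alpha_1=n+1$ for the direction realizing $a_1$. The plan is to split the sum according to whether an index equals $\alpha_1$, and to bound each term by Cauchy--Schwarz, always choosing the factor that carries the small weight $a_\gamma$ with $\gamma\neq\alpha_1$.

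First, the terms with $\alpha\neq\alpha_1$ and $\beta$ arbitrary. Cauchy--Schwarz gives $(u_k^{\alpha\beta})^2\leqslant \|A^\alpha\|^2\sum_i(h_{iik}^\beta)^2$, so
\[
\sum_{\alpha\neq\alpha_1}\sum_{\beta,k}(u_k^{\alpha\beta})^2\leqslant \sum_{\alpha\neq\alpha_1}a_\alpha\sum_{\beta,k,i}(h^\beta_{iik})^2\leqslant b\,|\nabla h|^2 .
\]
Second, the terms with $\alpha=\alpha_1$ and $\beta\neq\alpha_1$. By the symmetry $u^{\alpha_1\beta}=u^{\beta\alpha_1}$, these terms coincide with the $\alpha\neq\alpha_1$, $\beta=\alpha_1$ terms already handled above. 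Estimating them the same way but keeping only the $\alpha_1$-derivative gives $\sum_{\beta\neq\alpha_1,k}(u_k^{\alpha_1\beta})^2\leqslant b\,\sum_{k,i}(h^{\alpha_1}_{iik})^2$.

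Third, the single remaining term $\sum_k(u_k^{\alpha_1\alpha_1})^2$. Here constancy of $S$ enters: $u_k^{\alpha_1\alpha_1}=-\sum_{\gamma\neq\alpha_1}u_k^{\gamma\gamma}$. Then Cauchy--Schwarz with weights $a_\gamma$ gives
\[
\sum_k(u_k^{\alpha_1\alpha_1})^2\leqslant\Big(\sum_{\gamma\neq\alpha_1}a_\gamma\Big)\sum_{\gamma\neq\alpha_1}\sum_{k,i}(h^\gamma_{iik})^2 = b\sum_{\gamma\neq\alpha_1}\sum_{k,i}(h^\gamma_{iik})^2 .
\]

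Adding the second and third bounds gives at most $b\sum_{\gamma,k,i}(h^\gamma_{iik})^2\leqslant b|\nabla h|^2$, and together with the first bound the total is $2b|\nabla h|^2$, which is \eqref{eq:u-bound}. If $b=0$, then $\mathcal A$ has rank one, and the argument degenerates consistently: $u^{\alpha_1\alpha_1}=0$ and all the other terms vanish.

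The main obstacle is the bookkeeping that avoids double counting. The first bound has to use the full $|\nabla h|^2$, while the second and third bounds together may only use disjoint pieces of $\sum(h^\gamma_{iik})^2$, namely the $\alpha_1$ piece for the second and the $\gamma\neq\alpha_1$ pieces for the third. I expect this to close as described, since only the diagonal components $h_{iik}$ appear in both of those bounds.
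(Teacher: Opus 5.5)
Your proposal is correct and is essentially the paper's proof. It uses the same three Cauchy--Schwarz estimates: the $(n+1,n+1)$ term is handled via constancy of $S$, the mixed terms via the symmetry of Lemma~\ref{fine} so that the small weight $a_\gamma$ appears, and the remaining terms carry weight $a_\gamma$. The only difference is that you group the terms by rows $\alpha\neq n+1$ versus $\alpha=n+1$, whereas the paper splits them into the $(n+1,n+1)$ term, the doubled mixed terms, and the block with $\gamma,\eta\geqslant n+2$.
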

\begin{proof}
	 Since $S$ is constant, for each $k$ we have
	\[\langle A^{n+1},\nabla_{e_k}A^{n+1}\rangle=-\sum_{\gamma=n+2}^{n+m}\langle A^{\gamma},\nabla_{e_k}A^{\gamma}\rangle.\]
	 Hence, by the Cauchy inequality and the frame in \eqref{eq:diag},
	\[
	\langle A^{n+1},\nabla_{e_k}A^{n+1}\rangle^2=\big(\sum_{\gamma=n+2}^{n+m}\langle A^{\gamma},\nabla_{e_k}A^{\gamma}\rangle\big)^2\leqslant b\sum_{\gamma=n+2}^{n+m}|\nabla_{e_k}A^\gamma|^2.
	\]
	Also,
	\[
	2\sum_{\gamma=n+2}^{n+m}\langle A^{\gamma},\nabla_{e_k}A^{n+1}\rangle^2
	\leqslant 2b|\nabla_{e_k}A^{n+1}|^2,
	\]
	and
	\[
	\sum_{\gamma,\eta=n+2}^{n+m}\langle A^{\gamma},\nabla_{e_k}A^{\eta}\rangle^2
	\leqslant b\sum_{\eta=n+2}^{n+m}|\nabla_{e_k}A^\eta|^2.
	\]
	Finally, by Lemma~\ref{fine}, for each fixed $k$ we have
	\[
	\sum_{\alpha,\beta}\langle A^{\alpha},\nabla_{e_k}A^{\beta}\rangle^2
	=\langle A^{n+1},\nabla_{e_k}A^{n+1}\rangle^2
	+2\sum_{\gamma=n+2}^{n+m}\langle A^{\gamma},\nabla_{e_k}A^{n+1}\rangle^2
	+\sum_{\gamma,\eta=n+2}^{n+m}\langle A^{\gamma},\nabla_{e_k}A^{\eta}\rangle^2.
	\]
	Therefore, summing over $k$ proves \eqref{eq:u-bound}.
\end{proof}
In the following, we provide an upper bound for $\int_{M} |\nabla^2h|^2dM$, analogous to Theorem \ref{upperco2} in codimension 2.
\begin{thm}\label{lem:upper}
Let $M^n$ be a closed minimal submanifold in $\mathbb{S}^{n+m}$ with flat normal bundle. If $S$ is constant and $n\leqslant S\leqslant n+\delta\leqslant 2n$ for some $\delta>0$, then
	\begin{equation}\label{eq:upper}
\begin{aligned}
			\int_M|\nabla^2h|^2dM
			\leqslant
			\int_M\bigg\lbrace
			\left(S-2n-3+12\epsilon\right)|\nabla h|^2
			+3\sum_{\alpha,\beta}(A_{\alpha,\beta}-2B_{\alpha,\beta})
			\bigg\rbrace dM.
\end{aligned}
		\end{equation}
\end{thm}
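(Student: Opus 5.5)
The plan is to integrate the gradient Simons-type formula \eqref{eq:simons-gradient} over $M$. Since $M$ is closed, $\int_M\Delta(|\nabla h|^2)\,dM=0$, so
\[
\int_M|\nabla^2h|^2dM=\int_M\Big\{-(2n+3)|\nabla h|^2+3\sum_{\alpha,\beta}(A_{\alpha,\beta}-2B_{\alpha,\beta})+6U+V\Big\}dM,
\]
where
\[
U:=\sum_{k,\alpha,\beta}\langle A^\alpha,\nabla_{e_k}A^\beta\rangle^2,
\qquad
V:=\sum_{\alpha,\beta}\langle A^\alpha,A^\beta\rangle\sum_k\langle\nabla_{e_k}A^\alpha,\nabla_{e_k}A^\beta\rangle .
\]
It then suffices to prove the pointwise bound $6U+V\leqslant(S+12\epsilon)|\nabla h|^2$ and integrate.

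To bound $V$, I work at a point in the frame \eqref{eq:diag}. There $\langle A^\alpha,A^\beta\rangle=a_r\delta_{\alpha\beta}$, so
\[
V=\sum_r a_r|\nabla h^{n+r}|^2\leqslant a_1|\nabla h|^2 .
\]
A sharper bound, $V\leqslant a_1|\nabla h^{n+1}|^2+b\sum_{\gamma\geqslant n+2}|\nabla h^\gamma|^2$, is available but not needed. Since $a_1=S-b\leqslant S$, this gives $V\leqslant S|\nabla h|^2$.

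For $U$, Lemma~\ref{lem:u-bound} gives $U\leqslant 2b|\nabla h|^2$, hence $6U\leqslant12b|\nabla h|^2$. By \eqref{eq:b-eps}, $b\leqslant\epsilon$, so $6U+V\leqslant(S+12\epsilon)|\nabla h|^2$. Inserting this into the integrated identity yields \eqref{eq:upper}. The bound can be tightened slightly by keeping $V\leqslant(S-b)|\nabla h|^2$, which gives $S+11b$ instead of $S+12b$, but the stated constant $12\epsilon$ already follows from the cruder bound.

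The only step needing care is the frame-independence of the pointwise terms: $U$ and $V$ are globally defined scalars, so evaluating them in the special frame at each point is legitimate. Lemma~\ref{lem:u-bound} uses constancy of $S$ and Lemma~\ref{fine}. The main obstacle, the estimate on $U$, is therefore already handled by Lemma~\ref{lem:u-bound}, and the remaining work is bookkeeping.
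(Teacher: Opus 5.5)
Your proposal is correct and follows the paper's proof essentially verbatim. You integrate \eqref{eq:simons-gradient}, bound $\sum_r a_r|\nabla h^{n+r}|^2\leqslant S|\nabla h|^2$ in the frame \eqref{eq:diag}, control the remaining term by $12b|\nabla h|^2$ via Lemma~\ref{lem:u-bound}, and finish with $b\leqslant\epsilon$; your side remark that one could keep $S+11b$ is also valid, though not needed.
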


\begin{proof}
	In the frame chosen in \eqref{eq:diag}, \eqref{eq:simons-gradient} becomes
	\begin{align*}
		\frac12\Delta(|\nabla h|^2)=&(2n+3)|\nabla h|^2-3\sum_{\alpha,\beta}(A_{\alpha,\beta}-2B_{\alpha,\beta})
		-6\sum_{\alpha,\beta,k}\langle A^{\alpha},\nabla_{e_k}A^{\beta}\rangle^2\\
		&-\sum_{r=1}^{m} a_r|\nabla h^{n+r}|^2+|\nabla^2h|^2.
	\end{align*}
	Using $\sum\limits_{r=1}^{m} a_r|\nabla h^{n+r}|^2\leqslant S|\nabla h|^2$ and
	Lemma~\ref{lem:u-bound}, we obtain
	\begin{equation}\label{simple1}
	\frac12\Delta(|\nabla h|^2)
	\geqslant(2n+3-S-12b)|\nabla h|^2-3\sum_{\alpha,\beta}(A_{\alpha,\beta}-2B_{\alpha,\beta})+|\nabla^2h|^2.
	\end{equation}
	Integrating \eqref{simple1} over $M$ and using $b\leqslant\epsilon$ gives \eqref{eq:upper}.
\end{proof}
\begin{lem}
Let $M^n$ be a closed minimal submanifold in $\mathbb{S}^{n+m}$ with flat normal bundle. If $S$ is constant, then
	\begin{align*}
		\frac14\Delta(S^2-|\mathcal{A}|^2)
		={}&\sum_{r=1}^m(S-a_r)|\nabla h^{n+r}|^2
		+(2n-S)\sigma_2+3\sigma_3-2\sum_{\alpha,\beta,k}\langle A^\alpha,\nabla_{e_k}A^\beta\rangle^2,
	\end{align*}
	where $\sigma_3$ is defined as in \eqref{eq:bD-def}.
	Consequently,
	\begin{equation}\label{control}
		\sum_{r=1}^m(S-a_r)|\nabla h^{n+r}|^2
		\leqslant \frac14\Delta(S^2-|\mathcal{A}|^2)-(2n-S)\sigma_2+4b|\nabla h|^2.
	\end{equation}
\end{lem}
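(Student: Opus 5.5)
The plan is to start from the general identity \eqref{product}, which holds without restriction on codimension, and specialize it pointwise to the frame \eqref{eq:diag}. There $\langle A^\alpha,A^\beta\rangle=a_r\delta_{\alpha\beta}$ with $a_r=\|A^{n+r}\|^2$. Since $S$ is constant, $|\nabla S|^2=0$, and \eqref{Sconstant-general} gives $S|\nabla h|^2+nS^2-S|\mathcal A|^2=S|\nabla h|^2-S\cdot|\nabla h|^2\cdot 0+\dots$. Concretely, the first bracket of \eqref{product} equals $S|\nabla h|^2+S(nS-|\mathcal A|^2)$.

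In the second bracket the term $\sum_{\alpha,\beta}\langle A^\alpha,A^\beta\rangle\langle\nabla A^\alpha,\nabla A^\beta\rangle$ becomes $\sum_r a_r|\nabla h^{n+r}|^2$. Combining this with $S|\nabla h|^2$ produces the main term $\sum_r(S-a_r)|\nabla h^{n+r}|^2$.

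The algebraic remainder is
\[S(nS-|\mathcal A|^2)-n|\mathcal A|^2+\operatorname{Tr}(\mathcal A^3).\]
It is handled with the power-sum identities $|\mathcal A|^2=S^2-2\sigma_2$ and $\operatorname{Tr}\mathcal A^3=S^3-3S\sigma_2+3\sigma_3$ (Newton). Substituting gives
\[nS^2-S^3+2S\sigma_2-nS^2+2n\sigma_2+S^3-3S\sigma_2+3\sigma_3=(2n-S)\sigma_2+3\sigma_3,\]
as claimed.

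The remaining gradient term is $-\frac12\sum_{\alpha,\beta}|\nabla\langle A^\alpha,A^\beta\rangle|^2$. Its $k$-component is $\langle\nabla_kA^\alpha,A^\beta\rangle+\langle A^\alpha,\nabla_kA^\beta\rangle$, and by Lemma~\ref{fine} this equals $2\langle A^\alpha,\nabla_{e_k}A^\beta\rangle$. So the term is $-2\sum_{\alpha,\beta,k}\langle A^\alpha,\nabla_{e_k}A^\beta\rangle^2$, which finishes the identity.

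For \eqref{control}, rearrange the identity. Then bound $3\sigma_3\geqslant0$ (all $a_r\geqslant0$), and bound the gradient term by Lemma~\ref{lem:u-bound}: $2\sum\langle A^\alpha,\nabla_{e_k}A^\beta\rangle^2\leqslant4b|\nabla h|^2$.

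The only delicate point is the careful bookkeeping in specializing \eqref{product}. One must check that the cross term with $\langle A^\alpha,A^\beta\rangle$ collapses correctly at the chosen point, even though the frame diagonalizes $\mathcal A$ only at $q$. This is fine because every term is a frame-independent scalar evaluated at $q$.
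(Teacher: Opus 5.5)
Your proposal is correct and follows the paper's proof essentially step for step: specialize \eqref{product} at a point using the frame \eqref{eq:diag}, use Lemma~\ref{fine} to turn $-\frac12\sum_{\alpha,\beta}|\nabla\langle A^\alpha,A^\beta\rangle|^2$ into $-2\sum_{\alpha,\beta,k}\langle A^\alpha,\nabla_{e_k}A^\beta\rangle^2$, reduce the zeroth-order terms to $(2n-S)\sigma_2+3\sigma_3$ via Newton's identities, and deduce \eqref{control} from $\sigma_3\geqslant0$ and Lemma~\ref{lem:u-bound}. The only blemish is the garbled aside invoking \eqref{Sconstant-general} in your first paragraph, which you should delete; it is harmless, since you then correctly keep the first bracket as $S|\nabla h|^2+S(nS-|\mathcal A|^2)$.
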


\begin{proof}
	At any fixed point satisfying \eqref{eq:diag}, the standard computation of the Laplacian gives
	\begin{align*}
		\frac14\Delta(S^2-|\mathcal{A}|^2)
		={}&S|\nabla h|^2+nS^2-S|\mathcal{A}|^2\notag\\
		&-\Big(n|\mathcal{A}|^2-\operatorname{Tr}(\mathcal{A}^3)
		+\frac12\sum_{\alpha,\beta}|\nabla\langle A^\alpha,A^\beta\rangle|^2
		+\sum_{r=1}^{m} a_r|\nabla h^{n+r}|^2\Big).
	\end{align*}
	Using Lemma~\ref{fine}, we obtain
	\begin{equation}\label{1order}
	\begin{aligned}
		&S|\nabla h|^2-\frac12\sum_{\alpha,\beta}|\nabla\langle A^\alpha,A^\beta\rangle|^2-\sum_{r=1}^{m} a_r|\nabla h^{n+r}|^2\\
		=&\sum_{r=1}^{m}(S-a_r)|\nabla h^{n+r}|^2
	-\frac12\sum_{\alpha,\beta,k}\big(\langle \nabla_{e_k}A^\alpha,A^\beta\rangle+\langle \nabla_{e_k}A^\beta,A^\alpha\rangle\big)^2\\
	=&\sum_{r=1}^{m}(S-a_r)|\nabla h^{n+r}|^2-2\sum_{\alpha,\beta,k}\langle A^\alpha,\nabla_{e_k}A^\beta\rangle^2.
\end{aligned}
	\end{equation}
	Since $\sum\limits_{r=1}^{m} a_r^2=S^2-2\sigma_2$ and
	$\sum\limits_{r=1}^m a_r^3=S^3-3S\sigma_2+3\sigma_3$, we have
	\begin{equation}\label{0order}
	nS^2-S|\mathcal{A}|^2-n|\mathcal{A}|^2+\operatorname{Tr}(\mathcal{A}^3)=(2n-S)\sigma_2+3\sigma_3.
	\end{equation}
Summing \eqref{1order} and \eqref{0order} gives the identity
\[
\frac14\Delta(S^2-|\mathcal{A}|^2)
=\sum_{r=1}^{m}(S-a_r)|\nabla h^{n+r}|^2
-2\sum_{\alpha,\beta,k}\langle A^\alpha,\nabla_{e_k}A^\beta\rangle^2
+(2n-S)\sigma_2+3\sigma_3.
\]
Finally, using Lemma~\ref{lem:u-bound} and $\sigma_3\geqslant0$ gives \eqref{control}.
\end{proof}
Define
\[G:=\sum_{i,j,k,\alpha,\beta}\big(
4\lambda_k^\alpha\lambda_i^\beta h_{ijk}^\alpha h_{ijk}^\beta
-\lambda_i^\alpha\lambda_j^\alpha h_{iik}^\beta h_{jjk}^\beta-\lambda_i^\alpha\lambda_j^\beta h_{jjk}^\alpha h_{iik}^\beta
-2\lambda_i^\alpha\lambda_i^\beta h_{ijk}^\alpha h_{ijk}^\beta\big).\]
By \eqref{P-T type} and Lemma~\ref{fine}, $G$ can equivalently be written as
\begin{equation}\label{G}
	\begin{aligned}
		G=&-2\sum_{\alpha,\beta}(A_{\alpha,\beta}-2B_{\alpha,\beta})+2\sum_{\alpha\neq\beta}(A_{\alpha,\beta}-2B_{\alpha,\beta})\\
		&+\sum_{\alpha\neq\beta}\sum_{i,j,k}\big(4\lambda_k^\alpha\lambda_i^\beta h_{ijk}^\alpha h_{ijk}^\beta-2\lambda_i^\alpha\lambda_i^\beta h_{ijk}^\alpha h_{ijk}^\beta\big)-2\sum_{\alpha,\beta,k}\langle A^{\alpha},\nabla_{e_k}A^{\beta}\rangle^2
	\end{aligned}
\end{equation}
Therefore, substituting \eqref{G} into \eqref{1stoccur} gives
\begin{align*}
\int_{M}&3\sum_{\alpha,\beta}(A_{\alpha,\beta}-2B_{\alpha,\beta})dM\\
=\int_{M}&\bigg\lbrace3\Big(\sum_{i,\alpha,\beta,\gamma}\lambda_{i}^{\alpha}(\lambda_{i}^{\beta})^2\lambda_{i}^{\gamma}\langle A^{\alpha},A^{\gamma}\rangle-|\mathcal{A}|^2
-\sum_{\alpha,\beta,\gamma}\big(\sum_{i}\lambda_{i}^{\alpha}\lambda_{i}^{\beta}\lambda_{i}^{\gamma}\big)^2\Big)+|\mathcal{A}|^2-S^2\\
&+2\sum_{\alpha\neq\beta}(A_{\alpha,\beta}-2B_{\alpha,\beta})+\sum_{\alpha\neq\beta}\sum_{i,j,k}\big(4\lambda_k^\alpha\lambda_i^\beta h_{ijk}^\alpha h_{ijk}^\beta
-2\lambda_i^\alpha\lambda_i^\beta h_{ijk}^\alpha h_{ijk}^\beta\big)\\
&-2\sum_{\alpha,\beta,k}\langle A^{\alpha},\nabla_{e_k}A^{\beta}\rangle^2
+\sum_{\alpha,\beta,\gamma}\big(\sum_{i}\lambda_{i}^{\alpha}\lambda_{i}^{\beta}\lambda_{i}^{\gamma}\big)^2-\sum_{i,k,\alpha,\beta,\gamma}(\lambda_{k}^{\alpha})^2\lambda_{k}^{\gamma}(\lambda_{i}^{\beta})^2\lambda_{i}^{\gamma}\bigg\rbrace dM.
\end{align*}
By \eqref{0xj} and \eqref{Sconstant-general}, we obtain
\begin{align}
\int_{M}|\nabla^2 h|^2 dM
\geqslant\int_{M}&\bigg\lbrace
\frac{3}{2}\sum_{\alpha,\beta}(A_{\alpha,\beta}-2B_{\alpha,\beta})
-\frac{3}{2}|\nabla h|^2-\frac{1}{2}(|\mathcal{A}|^2-S^2)+\frac{3\left(nS-|\mathcal{A}|^2\right)^2}{2(n+4)S}\notag\\
&-\frac{1}{2}\Big(\sum_{\alpha,\beta,\gamma}
\big(\sum_{i}\lambda_{i}^{\alpha}\lambda_{i}^{\beta}\lambda_{i}^{\gamma}\big)^2-\sum_{i,k,\alpha,\beta,\gamma}(\lambda_{k}^{\alpha})^2\lambda_{k}^{\gamma}
(\lambda_{i}^{\beta})^2\lambda_{i}^{\gamma}\Big)\label{turn}\\
&-\sum_{\alpha\neq\beta}(A_{\alpha,\beta}-2B_{\alpha,\beta})+\sum_{\alpha,\beta,k}\langle A^{\alpha},\nabla_{e_k}A^{\beta}\rangle^2\notag\\
&-\sum_{\alpha\neq\beta}\sum_{i,j,k}\bigl(2\lambda_k^\alpha\lambda_i^\beta h_{ijk}^\alpha h_{ijk}^\beta-\lambda_i^\alpha\lambda_i^\beta h_{ijk}^\alpha h_{ijk}^\beta\bigr)\bigg\rbrace dM.\notag
\end{align}
Estimating the right-hand side of \eqref{turn} from below gives the following theorem, analogous to Theorem \ref{thmlower1} in codimension 2.
\begin{thm}
	Let $M^n$ be a closed minimal submanifold in $\mathbb{S}^{n+m}$ with flat normal bundle. If $S$ is constant and $n\leqslant S\leqslant n+\delta\leqslant2n$ for some $\delta>0$, then
	\begin{equation}\label{ultim}
		\begin{aligned}
			\int_M|\nabla^2 h|^2dM\geqslant\int_{M}&\bigg\lbrace\hat{\sigma}\sum_{\alpha,\beta}(A_{\alpha,\beta}-2B_{\alpha,\beta})-\big(\hat{\sigma}+\frac{8}{3}\hat{\sigma}\hat{\mu}b\big)|\nabla h|^2\\
			&-\frac{2\hat{\sigma}}{3}\Big(\hat{\mu}(S-2n)+\frac{3(3n-4)}{4n^2}(n+\delta)^2-1\Big)\sigma_2+(\frac{3}{4}-\frac{\hat{\sigma}}{2})F\bigg\rbrace dM.
		\end{aligned}
	\end{equation}
where $0<\hat{\sigma}<\frac{3}{2}$, $\hat{\mu}:=\frac{\Gamma_n}{3}+\frac{3n-3+2\sqrt{n(n-2)}}{2n}$.
\end{thm}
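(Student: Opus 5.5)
Following the codimension-two argument of Theorem~\ref{thmlower1}, the plan is to write $\int_M\Lambda_m\,dM$ for the part of the right-hand side of \eqref{turn} other than $\frac32\sum_{\alpha,\beta}(A_{\alpha,\beta}-2B_{\alpha,\beta})$ and the nonnegative term $\frac{3(nS-|\mathcal A|^2)^2}{2(n+4)S}$. We then bound $-\Lambda_m$ pointwise, modulo a divergence term, and take the convex combination of \eqref{0xj} and \eqref{turn} with weights $1-\frac{2\hat\sigma}{3}$ and $\frac{2\hat\sigma}{3}$. This combination produces exactly the terms $\hat\sigma\sum(A-2B)$ and $(\frac34-\frac{\hat\sigma}{2})F$ in \eqref{ultim}. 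The term $\sum_{\alpha,\beta,k}\langle A^\alpha,\nabla_{e_k}A^\beta\rangle^2$ in \eqref{turn} is nonnegative, so we may drop it.

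The pointwise estimates are made in the frame \eqref{eq:diag}, one term at a time. First, $-\frac12(|\mathcal A|^2-S^2)=\sigma_2$. Second, the cubic-trace terms are the general-codimension analogue of \eqref{sharper1}--\eqref{sharper2}. We bound $(\sum_i\lambda_i^\alpha\lambda_i^\beta\lambda_i^\gamma)^2$ by Cauchy--Schwarz together with \eqref{eq:opt-three}, and $\sum_k(\lambda_k^\alpha)^2\lambda_k^\gamma\sum_i(\lambda_i^\beta)^2\lambda_i^\gamma$ via \eqref{use}. Summing over index triples should give a bound $\le\frac{3n-4}{2n}S\cdot(\text{quadratic in }a_r)$, and we aim to dominate it by $\frac{3(3n-4)}{4n^2}(n+\delta)^2\sigma_2$. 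This uses $a_r\le S\le n+\delta$ and the fact that only pairs $r\ne s$ contribute, since the diagonal terms $\alpha=\beta=\gamma$ cancel; the extra factor $\frac{3}{2n}(n+\delta)$ absorbs the triple-index combinatorics. Third, $\sum_{\alpha\neq\beta}(A_{\alpha,\beta}-2B_{\alpha,\beta})\le\frac{\Gamma_n}{3}\sum_{r}(S-a_r)|\nabla h^{n+r}|^2$ by \eqref{key}. Fourth, the mixed terms $\sum_{\alpha\ne\beta}(2\lambda_k^\alpha\lambda_i^\beta-\lambda_i^\alpha\lambda_i^\beta)h^\alpha_{ijk}h^\beta_{ijk}$ are handled by \eqref{eq:opt-same-index}, \eqref{eq:opt-two} and the AM--GM inequality $2\sqrt{a_\alpha a_\beta}\,|h^\alpha||h^\beta|\le a_\beta|\nabla h^\alpha|^2+a_\alpha|\nabla h^\beta|^2$, taking $x=y=1$ in the notation of \eqref{mu}. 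This gives the coefficient $\frac{3n-3+2\sqrt{n(n-2)}}{2n}\sum_r(S-a_r)|\nabla h^{n+r}|^2$. Altogether
\[
-\Lambda_m\le \tfrac32|\nabla h|^2+\hat\mu\sum_{r}(S-a_r)|\nabla h^{n+r}|^2+\Big(\tfrac{3(3n-4)}{4n^2}(n+\delta)^2-1\Big)\sigma_2 .
\]

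Next we apply \eqref{control}:
\[
\hat\mu\sum_r(S-a_r)|\nabla h^{n+r}|^2\le\tfrac{\hat\mu}{4}\Delta(S^2-|\mathcal A|^2)+\hat\mu(S-2n)\sigma_2+4\hat\mu b|\nabla h|^2 .
\]
Integrating over $M$ removes the Laplacian. This gives $\int\Lambda_m\ge-\int\{(\frac32+4\hat\mu b)|\nabla h|^2+(\hat\mu(S-2n)+\frac{3(3n-4)}{4n^2}(n+\delta)^2-1)\sigma_2\}$. Multiplying by $\frac{2\hat\sigma}{3}$ yields the coefficient $\hat\sigma+\frac83\hat\sigma\hat\mu b$ in \eqref{ultim}. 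Then $|\nabla S|^2=0$ and discarding the nonnegative square term finish the proof.

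The main obstacle is the bookkeeping in the fourth step and in the cubic-trace step for arbitrary $m$. For the fourth step, the difficulty is to show that the cross terms over all ordered pairs $\alpha\ne\beta$ are controlled by the weights $S-a_r=\sum_{s\ne r}a_s$ and by nothing larger. For the cubic-trace step, the difficulty is to verify that the sum over triples is controlled by $\sigma_2$ with the stated constant. If the constant $\frac{3(3n-4)}{4n^2}(n+\delta)^2$ does not come out directly, the first thing to try is the cruder bound $\sum_i(\lambda_i^\alpha)^2(\lambda_i^\beta)^2\le\frac{n-2}{2n}a_\alpha a_\beta$, applied pairwise, together with $\sum_\gamma a_\gamma=S\le n+\delta$.
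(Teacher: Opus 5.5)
Your proposal is correct and follows the paper's proof essentially step for step. It uses the same term-by-term bounds on the right-hand side of \eqref{turn} via \eqref{key}, \eqref{eq:opt-same-index}, \eqref{eq:opt-two} (your choice $x=y=1$ is exactly what produces $\hat\mu$), \eqref{eq:opt-three} and \eqref{use}, then \eqref{control}, integration, and the convex combination with \eqref{0xj}.

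The cubic-trace step you left tentative closes directly: the not-all-equal triples sum to $S^3-\sum_r a_r^3=3S\sigma_2-3\sigma_3\leqslant 3S\sigma_2$. After the factor $\frac12$ this gives $\frac{3(3n-4)}{4n}(n+\delta)\sigma_2$, which is at most the stated term since $n+\delta\geqslant n$. The paper instead uses $S^3-\sum_r a_r^3\leqslant S^3-a_1^3\leqslant 3bS^2$ together with $b\leqslant\sigma_2/n$.
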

\begin{proof}
Using \eqref{key}, we have
\begin{equation}\label{sqrt}
	\begin{aligned}
		\sum_{\alpha\neq\beta}(A_{\alpha,\beta}-2B_{\alpha,\beta})\leqslant&\frac{\Gamma_n}{3}\sum_{\alpha\neq \beta}a_{\alpha-n}|\nabla h^{\beta}|^2\\
		=&\frac{\Gamma_n}{3}\sum_{r=1}^{m}(S-a_r)|\nabla h^{n+r}|^2.
	\end{aligned}
\end{equation}
	For $\alpha\ne\beta$, the estimates \eqref{eq:opt-two} and \eqref{eq:opt-same-index} give the following bound; when the two tangent indices coincide, \eqref{eq:opt-same-index} is used and its constant is smaller than that in \eqref{eq:opt-two}. Thus
	\begin{align*}
	\Big|2\sum_{i,j,k}\lambda_k^\alpha\lambda_i^\beta
	h_{ijk}^\alpha h_{ijk}^\beta\Big|
	\leqslant&\frac{n-1+\sqrt{n(n-2)}}{n}\sqrt{a_{\alpha-n} a_{\beta-n}}|\nabla h^\alpha||\nabla h^\beta|\\
	\leqslant& \frac{n-1+\sqrt{n(n-2)}}{2n}\Big(a_{\beta-n}|\nabla h^\alpha|^2+a_{\alpha-n}|\nabla h^\beta|^2\Big),
	\end{align*}
	and 
	\[
	\begin{aligned}
	\Big|\sum_{i,j,k}\lambda_i^\alpha\lambda_i^\beta
	h_{ijk}^\alpha h_{ijk}^\beta\Big|
	&\leqslant\frac{n-1}{2n}\sqrt{a_{\alpha-n}a_{\beta-n}}|\nabla h^\alpha||\nabla h^\beta|\\
	&\leqslant\frac{n-1}{4n}
	\Big(a_{\beta-n}|\nabla h^\alpha|^2+a_{\alpha-n}|\nabla h^\beta|^2\Big).
	\end{aligned}
	\]
	Summing the two estimates over all ordered $\alpha\ne\beta$ yields
	\begin{equation}\label{eq:R14-cross}
	\begin{aligned}
		&\sum_{\alpha\neq\beta}\sum_{i,j,k}\bigl(2\lambda_k^\alpha\lambda_i^\beta h_{ijk}^\alpha h_{ijk}^\beta
		-\lambda_i^\alpha\lambda_i^\beta h_{ijk}^\alpha h_{ijk}^\beta\bigr)\\
		\leqslant{}&\Big(\frac{n-1+\sqrt{n(n-2)}}{n}+\frac{n-1}{2n}\Big)
		\sum_{r=1}^{m}(S-a_r)|\nabla h^{n+r}|^2\\
		={}&\frac{3n-3+2\sqrt{n(n-2)}}{2n}
		\sum_{r=1}^{m}(S-a_r)|\nabla h^{n+r}|^2.
	\end{aligned}
\end{equation}
Furthermore, we observe that
\begin{equation}\label{TQ1}
\begin{aligned}
	&\sum_{\alpha,\beta,\gamma}\big(\sum_{i}\lambda_{i}^{\alpha}\lambda_{i}^{\beta}\lambda_{i}^{\gamma}\big)^2-\sum_{i,k,\alpha,\beta,\gamma}(\lambda_{k}^{\alpha})^2\lambda_{k}^{\gamma}(\lambda_{i}^{\beta})^2\lambda_{i}^{\gamma}\\
	=&\Big(\sum_{\alpha,\beta,\gamma}\big(\sum_{i}\lambda_{i}^{\alpha}\lambda_{i}^{\beta}\lambda_{i}^{\gamma}\big)^2-\sum_{\alpha}\big(\sum_{i}\lambda_{i}^{\alpha}\lambda_{i}^{\alpha}\lambda_{i}^{\alpha}\big)^2\Big)-\\
	&\Big(\sum_{i,k,\alpha,\beta,\gamma}(\lambda_{k}^{\alpha})^2\lambda_{k}^{\gamma}(\lambda_{i}^{\beta})^2\lambda_{i}^{\gamma}-\sum_{\alpha}\sum_{i,k}(\lambda_{k}^{\alpha})^2\lambda_{k}^{\alpha}(\lambda_{i}^{\alpha})^2\lambda_{i}^{\alpha}\Big).
\end{aligned}
\end{equation}
Since $b\leqslant\epsilon$ gives $a_1=S-b\geqslant S-\epsilon\geqslant n$ and $\sigma_2\geqslant a_1b$, we have $b\leqslant\dfrac{\sigma_2}{n}$.
Thus by \eqref{eq:opt-three},
\begin{align}
		&\sum_{\alpha,\beta,\gamma}\big(\sum_{i}\lambda_{i}^{\alpha}\lambda_{i}^{\beta}\lambda_{i}^{\gamma}\big)^2-\sum_{\alpha}\big(\sum_{i}\lambda_{i}^{\alpha}\lambda_{i}^{\alpha}\lambda_{i}^{\alpha}\big)^2\label{TQ2}\\
		=&\sum_{\alpha\neq\beta}\sum_{\gamma}\big(\sum_{i}\lambda_{i}^{\alpha}\lambda_{i}^{\beta}\lambda_{i}^{\gamma}\big)^2+\sum_{\alpha=\beta\neq\gamma}\big(\sum_{i}\lambda_{i}^{\alpha}\lambda_{i}^{\beta}\lambda_{i}^{\gamma}\big)^2\notag\\
		\leqslant&\sum_{\alpha\neq\beta}\sum_{\gamma}\big(\sum_{i}(\lambda_{i}^{\alpha})^2(\lambda_{i}^{\beta})^2\big)\sum_{i}(\lambda_{i}^{\gamma})^2+\sum_{\alpha=\beta\neq\gamma}\big(\sum_{i}(\lambda_{i}^{\alpha})^2(\lambda_{i}^{\gamma})^2\big)\sum_{i}(\lambda_{i}^{\beta})^2\notag\\
		\leqslant&\frac{n-2}{2n}\sum_{\alpha\neq\beta}\sum_{\gamma}a_{\alpha-n}a_{\beta-n}a_{\gamma-n}+\frac{n-2}{2n}\sum_{\alpha=\beta\neq\gamma}a_{\alpha-n}a_{\beta-n}a_{\gamma-n}\notag\\
		=&\frac{n-2}{2n}\big(S^3-\sum_{r}a_{r}^3\big)\notag\\
		\leqslant&\frac{n-2}{2n}(S^{3}-a_{1}^3)\notag\\
		=&\frac{n-2}{2n}b(S^2+Sa_{1}+a_{1}^2)\notag\\
		\leqslant&\frac{n-2}{2n}3bS^2\notag\\
		\leqslant&\frac{3(n-2)}{2n^2}(n+\delta)^2\sigma_2,\notag
\end{align}
by \eqref{use} and \eqref{TQ2},
\begin{align}
		&\Big|\sum_{i,k,\alpha,\beta,\gamma}(\lambda_{k}^{\alpha})^2\lambda_{k}^{\gamma}(\lambda_{i}^{\beta})^2\lambda_{i}^{\gamma}-\sum_{\alpha}\sum_{i,k}(\lambda_{k}^{\alpha})^2\lambda_{k}^{\alpha}(\lambda_{i}^{\alpha})^2\lambda_{i}^{\alpha}\Big|\label{TQ3}\\
		\leqslant&\sum_{\alpha\neq\beta}\sum_{\gamma}\sum_{i,k}|(\lambda_{k}^{\alpha})^2\lambda_{k}^{\gamma}(\lambda_{i}^{\beta})^2\lambda_{i}^{\gamma}|+\sum_{\alpha=\beta\neq\gamma}\sum_{i,k}|(\lambda_{k}^{\alpha})^2\lambda_{k}^{\gamma}(\lambda_{i}^{\beta})^2\lambda_{i}^{\gamma}|\notag\\
		\leqslant&\sum_{\alpha\neq\beta}\sum_{\gamma}\big(\mathop{\max}\limits_{i}|\lambda_i^{\gamma}|^2\big)\sum_{k}(\lambda_{k}^{\alpha})^2\sum_{i}(\lambda_i^{\beta})^2+\sum_{\alpha=\beta\neq\gamma}\big(\mathop{\max}\limits_{i}|\lambda_i^{\gamma}|^2\big)\sum_{k}(\lambda_{k}^{\alpha})^2\sum_{i}(\lambda_i^{\beta})^2\notag\\
		\leqslant&\frac{n-1}{n}\sum_{\alpha\neq\beta}\sum_{\gamma}a_{\alpha-n}a_{\beta-n}a_{\gamma-n}+\frac{n-1}{n}\sum_{\alpha=\beta\neq\gamma}a_{\alpha-n}a_{\beta-n}a_{\gamma-n}\notag\\
		\leqslant&\frac{3(n-1)}{n^2}(n+\delta)^2\sigma_2.\notag
\end{align}
Hence, combining \eqref{TQ1}, \eqref{TQ2} and \eqref{TQ3}, we obtain
\begin{equation}\label{TQ4}
	\sum_{\alpha,\beta,\gamma}\big(\sum_{i}\lambda_{i}^{\alpha}\lambda_{i}^{\beta}\lambda_{i}^{\gamma}\big)^2-\sum_{i,k,\alpha,\beta,\gamma}(\lambda_{k}^{\alpha})^2\lambda_{k}^{\gamma}(\lambda_{i}^{\beta})^2\lambda_{i}^{\gamma}\leqslant\frac{3(3n-4)}{2n^2}(n+\delta)^2\sigma_2.
\end{equation}
Moreover,
\[|\mathcal{A}|^2=S^2-2\sigma_2,\quad \sum_{\alpha,\beta,k}\langle A^{\alpha},\nabla_{e_k}A^{\beta}\rangle^2\geqslant0,\quad \frac{3\left(nS-|\mathcal{A}|^2\right)^2}{2(n+4)S}\geqslant0.\]
Combining these facts with \eqref{sqrt}, \eqref{eq:R14-cross} and \eqref{TQ4}, \eqref{turn} becomes
\pagebreak[3]
\begin{equation}\label{esti}
	\begin{aligned}
		\int_{M}|\nabla^2 h|^2dM\geqslant\int_{M}&\bigg\lbrace\frac{3}{2}\sum_{\alpha,\beta}(A_{\alpha,\beta}-2B_{\alpha,\beta})-\frac{3}{2}|\nabla h|^2-\Big(\frac{3(3n-4)}{4n^2}(n+\delta)^2-1\Big)\sigma_2\\
		&-\Big(\frac{\Gamma_n}{3}+\frac{3n-3+2\sqrt{n(n-2)}}{2n}\Big)\sum_{r=1}^{m}(S-a_r)|\nabla h^{n+r}|^2\bigg\rbrace dM.
	\end{aligned}
\end{equation}
Substituting \eqref{control} into \eqref{esti} gives
	\begin{equation}\label{23sigma}
	\begin{aligned}
		\int_M|\nabla^2 h|^2dM\geqslant\int_{M}&\bigg\lbrace\frac{3}{2}\sum_{\alpha,\beta}(A_{\alpha,\beta}-2B_{\alpha,\beta})-\big(\frac{3}{2}+4\hat{\mu}b\big)|\nabla h|^2\\
		&-\Big(\hat{\mu}(S-2n)+\frac{3(3n-4)}{4n^2}(n+\delta)^2-1\Big)\sigma_2\bigg\rbrace dM.
	\end{aligned}
\end{equation}
Let $0<\hat{\sigma}<\frac{3}{2}$. Taking the convex combination of \eqref{23sigma} and \eqref{0xj} with weights $\frac{2\hat{\sigma}}{3}$ and $1-\frac{2\hat{\sigma}}{3}$ proves
\eqref{ultim}.
\end{proof}
\begin{proof}[\textbf{Proof of Theorem~\ref{main} in general codimension $m\geqslant3$}]
If $0\leqslant S\leqslant n$, the first-gap theorem quoted in the introduction gives exactly the two alternatives stated in Theorem~\ref{main}. Hence it remains only to rule out constant values in the interval $(n,n+\delta(n,m)]$. For this purpose it is enough to prove the stronger estimates below with the slightly larger endpoints  $\delta=\frac{n}{86.36768}$ for $3\leqslant n\leqslant5$ and $\delta=\frac{n}{66.95630}$ for $n\geqslant6$, since $\frac{n}{87}<\frac{n}{86.36768}$ and $\frac{n}{67}<\frac{n}{66.95630}$.\\
Using $b\leqslant\epsilon$ and combining Theorem \ref{lem:upper} with \eqref{ultim}, we have
\begin{equation}\label{0}
	\begin{aligned}
		0\leqslant\int_M&\bigg\lbrace\Big(S-2n-3+12\epsilon+\hat{\sigma}+\frac{8}{3}\hat{\sigma}\hat{\mu}\epsilon\Big)|\nabla h|^2
		+(3-\hat{\sigma})\sum_{\alpha,\beta}(A_{\alpha,\beta}-2B_{\alpha,\beta})\\
		&+\frac{2\hat{\sigma}}{3}\Big(\hat{\mu}(S-2n)+\frac{3(3n-4)}{4n^2}(n+\delta)^2-1\Big)\sigma_2-\big(\frac{3}{4}-\frac{\hat{\sigma}}{2}\big)F\bigg\rbrace dM.
	\end{aligned}
\end{equation}
Finally, applying \eqref{3-si} and \eqref{3-sig} to \eqref{0}, with $\sigma$ and $\theta$ replaced by $\hat\sigma$ and $\widehat\theta$, we obtain
\begin{equation*}
	\begin{aligned}
		0\leqslant\int_{M}\bigg\lbrace\mathcal{D}_{1}|\nabla h|^2+\mathcal{D}_{2}\sigma_2-\big(\frac{3}{4}-\frac{\hat{\sigma}}{2}-\kappa\big)F\bigg\rbrace dM,
	\end{aligned}
\end{equation*}
where
\[
\widehat\theta=\Big(1-\frac{\hat\sigma}{3}\Big)\Big(c(1+\rho)\Big)^{\frac{1}{3}}
\]
and
\begin{equation}\label{D1D2-def}
\begin{aligned}
\mathcal{D}_{1}=&S-2n-3+12\epsilon+\hat{\sigma}+\frac{8}{3}\hat{\sigma}\hat{\mu}\epsilon
+\Big(1-\frac{\hat\sigma}{3}\Big)\Big(S+4+(\Gamma_n-1)\epsilon\Big)\\
&+\frac{2}{3\sqrt3}\widehat\theta^{\frac{3}{2}}\kappa^{-\frac{1}{2}}\sqrt{(n+\delta)\delta}
+\Big(1-\frac{\hat{\sigma}}{3}\Big)\sqrt[3]{c(1+\rho^{-1})\frac{n-2}{n}(S-\epsilon)\epsilon^2},\\
\mathcal{D}_{2}=&\frac{2\hat{\sigma}}{3}\Big(\hat{\mu}(S-2n)+\frac{3(3n-4)}{4n^2}(n+\delta)^2-1\Big).
\end{aligned}
\end{equation}
For $3\leqslant n\leqslant5$, take
\[
\begin{gathered}
	\delta=\frac n{86.36768},\qquad c=\frac{32}{15},\\
	\hat\sigma=1.395172650836,\qquad
	\rho=0.173359800579,\qquad \kappa=0.052413674482.
\end{gathered}
\]
For $n\geqslant6$, take
\[
\begin{gathered}
	\delta=\frac n{66.95630},\qquad c=\frac{24}{5}-\frac{16}{n+\delta},\\
	\hat\sigma=1.352606093650,\qquad
	\rho=0.236478516159,\qquad \kappa=0.073696953075.
\end{gathered}
\]
The numerical verification in Appendix~\ref{app:numerical-general} gives, in both ranges,
\[
\mathcal D_1<0,
\qquad
\mathcal D_2<0,
\qquad
\frac34-\frac{\hat\sigma}{2}-\kappa>0.
\]
Since $\sigma_2\geqslant0$, every term in the integrand above is non-positive, while the whole integral is non-negative. Hence all three terms vanish identically. In particular,
\[
|\nabla h|^2\equiv0,
\qquad
\sigma_2\equiv0,
\qquad
F\equiv0.
\]
Using \eqref{Sconstant-general}, we obtain
\[
0=|\nabla h|^2=S(S-n)-2\sigma_2=S(S-n).
\]
Because $S\geqslant n$, we conclude that $S\equiv n$. The first-gap theorem quoted in the introduction then implies that, up to an ambient isometry, $M^n$ lies in a totally geodesic $\mathbb S^{n+1}\subset\mathbb S^{n+m}$ as the Clifford torus stated in Theorem~\ref{main}.
\end{proof}
\begin{rem}
	Further details regarding the inequalities $\mathcal{D}_{1}<0$ and $\mathcal{D}_{2}<0$ can be found in Appendix~\ref{app:numerical-general}.
\end{rem}

\appendix
\section{Numerical verification for \texorpdfstring{$m\geqslant3$}{m >= 3}}\label{app:numerical-general}
For fixed $n$ and for each parameter choice below, all quantities other than $S$ are fixed at their endpoint values. From \eqref{D1D2-def}, $\mathcal D_1$ is the sum of increasing affine terms and a positive multiple of $(S-\epsilon)^\frac{1}{3}$, while $\mathcal D_2$ is increasing in $S$. Hence it is enough to evaluate both coefficients at $S=n+\delta$.

For $3\leqslant n\leqslant5$, take
\[
\begin{gathered}
	\delta=\frac n{86.36768},\qquad c=\frac{32}{15},\\
	\hat\sigma=1.395172650836,\qquad
	\rho=0.173359800579,\qquad \kappa=0.052413674482.
\end{gathered}
\]
Then
\[
\frac34-\frac{\hat\sigma}{2}-\kappa=10^{-10}>0.
\]
At $S=n+\delta$, direct substitution in \eqref{D1D2-def} gives
\[
\begin{array}{c|cc}
	n&\mathcal D_1&\mathcal D_2\\ \hline
	3&<-5.47775434\times10^{-8}&<-4.035007623\\
	4&<-1.388513273\times10^{-1}&<-5.067891427\\
	5&<-2.810756848\times10^{-1}&<-6.051982054
\end{array}
\]
Thus all three required inequalities are strict for $3\leqslant n\leqslant5$.

For $n\geqslant6$, take
\[
\begin{gathered}
	\delta=\frac n{66.95630},\qquad c=\frac{24}{5}-\frac{16}{n+\delta},\\
	\hat\sigma=1.352606093650,\qquad
	\rho=0.236478516159,\qquad \kappa=0.073696953075.
\end{gathered}
\]
Then
\[
\frac34-\frac{\hat\sigma}{2}-\kappa=10^{-10}>0,
\]
and the endpoint values satisfy
\[
\begin{array}{c|cc}
	n&\mathcal D_1&\mathcal D_2\\ \hline
	6&<-1.496434737\times10^{-2}&<-6.687029445\\
	7&<-5.962410481\times10^{-3}&<-7.591896378\\
	8&<-1.655055473\times10^{-3}&<-8.492199124\\
	9&<-1.04197246\times10^{-7}&<-9.389681336\\
	10&<-1.0419\times10^{-7}&<-10.285296449
\end{array}
\]
It remains to control the other terms in $n$. Write
\[
n+\delta=\tau n,\qquad \epsilon=\eta n,\qquad
c=c_0-\frac{c_1}{n},
\]
where $\tau,\eta,c_0,c_1$ are positive constants. After collecting affine terms, every non-affine term in $\mathcal D_1$ is a positive multiple of one of
\[
\sqrt{17n^2-26n+9},\qquad
\sqrt{n(n-2)},\qquad
\sqrt{n(c_0n-c_1)},\qquad
\big((c_0n-c_1)(n-2)n\big)^{1/3}.
\]
These four functions are concave on $[6,\infty)$; for the last one this follows from the concavity of the geometric mean on the positive cone. Hence $\mathcal D_1$ is concave and the interval computation gives
\[
\mathcal D_1(11)-\mathcal D_1(10)
<-1.105497472\times10^{-3}<0.
\]
Together with $\mathcal D_1(10)<0$, this proves $\mathcal D_1(n)<0$ for every integer $n\geqslant10$; the cases $6\leqslant n\leqslant9$ are contained in the table.

For $\mathcal D_2$, set
\[
q=1-\frac1{66.95630},\qquad
R_n=\sqrt{17n^2-26n+9},\qquad T_n=\sqrt{n(n-2)}.
\]
Differentiating the endpoint expression gives
\[
\mathcal D_2'(n)=\frac{2\hat\sigma}{3}
\left[\frac94\tau^2-q\left(\frac53+\frac{34n-26}{12R_n}+\frac{n-1}{T_n}\right)\right].
\]
Since $R_n<\sqrt{17}\,n$, $T_n<n-1$, and
$34n-26\geqslant(34-\frac{26}{6})n$ for $n\geqslant6$, we obtain
\[
\mathcal D_2'(n)
<\frac{2\hat\sigma}{3}\left[\frac94\tau^2-q\left(
\frac53+\frac{34-\frac{26}{6}}{12\sqrt{17}}+1\right)\right]
<-0.81136<0.
\]
Thus $\mathcal D_2$ is decreasing; since $\mathcal D_2(6)<0$, it is negative for every $n\geqslant6$.

\section{Numerical verification for \texorpdfstring{$m=2$}{m = 2}}\label{app:numerical-codim-two}
For fixed $n$ and for each parameter choice below, all quantities other than $S$ are fixed at their endpoint values. From the formulas following \eqref{finally}, $\mathcal C_1$ is the sum of increasing affine terms and a positive multiple of $(S-\epsilon)^\frac{1}{3}$, while $\mathcal C_2$ is increasing in $S$. Therefore it is enough to check $S=n+\delta$. 

For $3\leqslant n\leqslant5$, take
\[
\begin{gathered}
	\delta=\frac n{80.04594},\qquad c=\frac{32}{15},\\
	\sigma=1.392888457230,\qquad
	\rho=0.176286456252,\qquad \kappa=0.053555771285.
\end{gathered}
\]
Then
\[
\frac34-\frac\sigma2-\kappa=10^{-10}>0,
\]
and the endpoint values are
\[
\begin{array}{c|cc}
	n&\mathcal C_1&\mathcal C_2\\ \hline
	3&<-1.2655\times10^{-8}&<-3.401096539\\
	4&<-1.397129500\times10^{-1}&<-4.150795067\\
	5&<-2.828354554\times10^{-1}&<-4.871909089
\end{array}
\]
Thus $\mathcal C_1<0$ and $\mathcal C_2<0$ for $3\leqslant n\leqslant5$.

For $n\geqslant6$, take
\[
\begin{gathered}
	\delta=\frac n{61.95887},\qquad c=\frac{24}{5}-\frac{16}{n+\delta},\\
	\sigma=1.347869050867,\qquad
	\rho=0.241247176548,\qquad \kappa=0.0760654744665.
\end{gathered}
\]
Then
\[
\frac34-\frac\sigma2-\kappa=10^{-10}>0,
\]
and the endpoint calculation gives
\[
\begin{array}{c|cc}
	n&\mathcal C_1&\mathcal C_2\\ \hline
	6&<-1.941441181\times10^{-2}&<-5.339178672\\
	7&<-9.156050428\times10^{-3}&<-6.009723704\\
	8&<-3.720525042\times10^{-3}&<-6.677585719\\
	9&<-1.010197647\times10^{-3}&<-7.343788660\\
	10&<-3.44517493\times10^{-7}&<-8.008892855
\end{array}
\]
For the remaining dimensions, write again
\[
n+\delta=\tau n,\qquad \epsilon=\eta n,\qquad
c=c_0-\frac{c_1}{n}.
\]
After collecting affine terms, the non-affine terms in $\mathcal C_1$ are positive multiples of the same four concave functions displayed in the preceding section. Hence $\mathcal C_1$ is concave and the interval computation gives
\[
\mathcal C_1(11)-\mathcal C_1(10)
<-1.260923771\times10^{-4}<0.
\]
Since $\mathcal C_1(10)<0$, it follows that $\mathcal C_1(n)<0$ for every integer $n\geqslant10$; the cases $6\leqslant n\leqslant9$ are listed in the table.

For $\mathcal C_2$, put
\[
q=1-\frac1{61.95887},\qquad
R_n=\sqrt{17n^2-26n+9},\qquad T_n=\sqrt{n(n-2)}.
\]
At $x=y=\frac{\sqrt{3}}{3}$, differentiation of the endpoint expression yields
\[
\mathcal C_2'(n)=\frac{2\sigma}{3}\left[
\frac{3\tau}{2}-q\left(\frac16+\frac{34n-26}{12R_n}
+\frac{1}{2\sqrt3}\left(3+\frac{2(n-1)}{T_n}\right)\right)\right].
\]
Using the same elementary bounds as above, we obtain
\[
\mathcal C_2'(n)
<\frac{2\sigma}{3}\left[\frac{3\tau}{2}-q\left(
\frac16+\frac{34-\frac{26}{6}}{12\sqrt{17}}+\frac{5}{2\sqrt3}\right)\right]
<-0.58387<0.
\]
Thus $\mathcal C_2$ is decreasing on $[6,\infty)$; since $\mathcal C_2(6)<0$, it is negative for every $n\geqslant6$.

\end{document}